\def\@captype{figure}
\def\@captype{figure}
\def\captionof#1#2{{\def\@captype{#1}#2}}
\date{}
\newtheorem{theo}{Th\'eor\`eme}
\newtheorem{lem}[theo]{Lemme}
\newtheorem{prop}[theo]{Proposition}
\newtheorem{defi}[theo]{Definition}
\newtheorem{rem}[theo]{Remarque}
\newtheorem{nota}[theo]{Notations}
\newcommand{\beq}{\begin{equation}}
\newcommand{\eeq}{\end{equation}}
\newcommand{\h}{{\mathbb{H}}}
\newcommand{\Z}{{\mathbb{Z}}}
\newcommand{\R}{{\mathbb{R}}}
\newcommand{\cut}{\hfil\break}
\newcommand{\del}{\partial}
\newcommand{\tq}{{\ |\ }}
\newcommand{\tens}{{\otimes}}
\newcommand{\type}{\small{ \in}}
\begin{document}

\title[L'anneau de cohomologie \`a coefficients dans $\Z_p$]{ L'anneau de cohomologie  de  toutes les vari\' et\' es de Seifert}

\footnotetext[1]{\noindent{ MSC-class:  13D03, 57S25 (Primary),
 55N45 (Secondary)}}

\author{A. Bauval,  C. Hayat}


\begin{abstract}

\noindent

\emph{The cohomology ring with coefficients in $\Z_p$, where $p$ is a prime integer, of a Seifert manifold $M$, orientable or not orientable is obtained from a $\Delta$-simplicial decomposition of $M$. Many choices must be made before applying 
Alexander-Whitney formula to get the cup-products. The most difficult choices are those of  liftings from the cellular complex to the $\Delta$-simplicial complex when we add the condition to be cocycles.}

-----

\emph{L'anneau de cohomologie à coefficients dans $\Z_p$, où $p$ est un nombre premier,  d'une variété de Seifert $M$, orientable ou non-orientable est obtenu à partir d'une décomposition $\Delta$-simpliciale de $M$. Plusieurs choix sont à faire avant d'appliquer la formule d'Alexander-Whitney qui permet d'obtenir les cup-produits. Les choix les plus délicats sont ceux de relevés du complexe cellulaire dans le complexe simpliciale auxquels on impose d'être des cocyles $\Delta$-simpliciaux.}
\end{abstract}

\maketitle


\section{ Introduction, Notations} \label{section:IN}

\subsection{Inroduction} Dans cet article, on détermine   l'anneau de  cohomologie à valeurs dans $\Z_p$, où $p$ est un entier premier, pour toutes les variétés de Seifert orientables ou non en  utilisant un point de vue $\Delta$-simplicial qui, bien que long et très combinatoire, se montre  très efficace. Cette efficacité a \'et\'e tr\`es bien illustr\'ee par A. Hatcher \cite{hatcher}, voir aussi \cite{jpb}.  Dans tout le reste de l'article, on omettra le symbole $\Delta$ devant le mot "sympliciale" ou "simplexe". L'anneau de cohomologie des variétés de Seifert orientables a déjà été étudié dans plusieurs articles \cite{bhzz1}, \cite{bhzz2}, \cite{bz}, mais pas celui des  variétés de Seifert non-orientables.
Notre attention pour le calcul des cup-produits a été attirée par les études portant sur une  extension du  Th\'eor\`eme de Borsuk-Ulam pour les vari\'et\'es de dimension 3 \cite{ghz}. En effet si $\tau $ est une involution sur une telle vari\'et\'e $M$, alors toute application continue $f$ de $M$ dans $\R^3$ admet un point $x\in M$ tel que $f(\tau(x))=x$ si et seulement si la puissance trois pour le cup-produit de la classe de cohomologie à coefficients dans $\Z_2$ associ\' ee \`a $\tau$  est non nulle.

Décrivons les étapes de la  m\'ethode que nous avons choisie.

Nous commen\c cons par construire une d\'ecomposition cellulaire de la vari\'et\'e $M$, (Section \ref{section:decomsimp}), en pr\'ecisant dans \ref{sub:wab} les mots qui permettent de paver les 2-cellules, bords du voisinage tubulaire des fibres singuli\`eres et de la  derni\`ere 3-sph\`ere.
Le complexe cellulaire $(C_*)_{cell}$ ainsi obtenu est subdivisé en un complexe simplicial $(C_*)_{simp}$.

Notons $T\colon (C_*)_{cell}\to (C_*)_{simp}$ le morphisme associé à cette subdivision, et $T^t\colon C^*_{simp}\to C^*_{cell}$ le morphisme   transpos\'e   (Sous-section \ref{sub:mor}). Nous
choisissons, pour chaque  g\'en\'erateur $\xi$ des cochaînes cellulaires,  un relev\'e $R(\xi)$ de $T^t(\xi)$ dans les cochaînes simpliciales.
Ce choix est fait de telle fa\c con que $R(\xi)$ soit un cocycle et pas seulement une cocha\^ine, (Section \ref{section:relev}).

Le cup-produit de deux cocha\^\i nes simpliciales est calcul\'e par la formule d'Alexander-Whitney.  Pour les cup-produits d'\'el\'ements de $H^1\tens H^1$, on d\'ecrit un calcul qui permet d'\'eviter l'évaluation sur les (nombreux~!) 2-simplexes, (Sous-sections \ref{sub:coef}, \ref{sub:11=2,2} et \ref{sub:11=2,p}). Pour les cup-produits d'\'el\'ements de $H^1\tens H^2$, on applique la formule d'Alexander-Whitney de fa\c con plus classique puis le  quasi-isomorphisme $T^t$, (Sous-sections \ref{sub:12=3,2} et \ref{sub:12=3,p}).

\vspace{1cm}

Le plan de cet article est comme suit. Apr\`es cette section d'introduction, de notations sur les vari\'et\'es de Seifert et de quelques invariants associ\'es, la section \ref{section:gps} d\'ecrit une d\'ecomposition cellulaire d'une vari\'et\'e de Seifert quelconque $M$ et donne une pr\'esentation des groupes de cohomologie $H^*(M,\Z_p)$. Dans la section \ref{section:theprin}, le th\'eor\`eme principal  pr\'esente tous les cup-produits sous forme de tableaux. La preuve de ces r\'esultats constitue le reste de l'article. Dans les sections et sous-sections \ref{section:decomsimp}, \ref{sub:mor}, \ref{section:relev} sont d\'ecrits les choix faits pour obtenir une d\'ecomposition simpliciale, le quasi-isomorphisme $T$, et les relev\'es des cocycles cellulaires en cocycles simpliciaux.

Dans la section \ref{section:p=2}, on  applique, via la formule d'Alexander-Whitney,  tous ces choix pour le calcul des cup-produits lorsque les coefficients de la cohomologie sont \'egaux \`a 2, et dans la section \ref{section:p>2} lorsque les coefficients de la cohomologie sont \'egaux \`a un entier $p$ premier, $p>2$.

La section \ref{section:fig} est faite de figures symbolisant les d\'ecompositions cellulaires et simpliciales.

\subsection{Notations}\label{intro:Nota}

En suivant les notations d'Orlik \cite{s}, \cite{st}, \cite{orlik}, une variété de Seifert $M$  est décrite par une liste d'invariants de Seifert $$\{e; (\type, g); (a_1,b_1),\ldots ,(a_m,b_m)\}.$$ Ici $e$ est un entier, le type  $\type$ sera décrit plus bas, $g$ est le genre de la surface de base (l'espace des orbites obtenues en identifiant chaque fibre $S^1$ de $M$ à un point),  et pour chaque $k$, les entiers $a_k,b_k$ sont premiers entre eux avec  $a_k\ne 0$ (si $b_k=0$ alors $a_k=\pm 1).$

Comme Orlik dans \cite{orlik}, p.74 ( et aussi comme chez d'autres auteurs), nous introduisons une fibre supplémentaire, non-exceptionnelle $a_0=1,b_0=e$ et utilisons la présentation suivante du groupe fondamental de $M$ :
\begin{eqnarray}
\pi_1(M)=\left<{\begin{matrix}q_0,\ldots , q_m\\v_1,\ldots  ,v_{g'}\\h\end{matrix}\left|
\begin{matrix}
 [q_k,h]\quad\text{and}\quad q_k^{a_k}h^{b_k},&0\le k\le m\\
v_jhv_j^{-1}h^{-\varepsilon_j},&1\le j \le g'\\
q_0\ldots q_mV&\end{matrix}\right.}\right>, \label{BB}
\end{eqnarray}
où les générateurs et $g', V$ sont décrits ci-dessous.

\begin{itemize}
\item Le type $\type$ de $M$ est égal à:
	\begin{itemize}
	\item[$o_1$] si la surface de base et l'espace total sont orientables (forcément tous les $\varepsilon_j$ sont égaux à  $1$);
	\item[$o_2$] si la surface de base est orientable et l'espace total  non-orientable, 
alors $g\ge 1$ (forcément tous les $\varepsilon_j$ sont égaux à  $-1$);
	\item[$n_1$] si la surface de base et l'espace total sont non-orientables 
alors $g\ge 1$ et de plus tous les $\varepsilon_j$ sont égaux à $1$;
	\item[$n_2$] si la surface de base est  non-orientable (alors $g\ge 1$) et l'espace total est orientable (forcément tous les  $\varepsilon_j$ sont égaux à $-1$);
	\item[$n_3$] si la surface de base et l'espace total sont non-orientables avec de plus, 
tous les $\varepsilon_j$ égaux à $-1$ sauf $\varepsilon_1=1$, et $g\ge 2$;
	\item[$n_4$] si la surface de base et l'espace total sont non-orientables avec de plus, 
tous les $\varepsilon_j$ égaux à $-1$ sauf $\varepsilon_1=\varepsilon_2=1$, et $g\ge 3$.
	\end{itemize}
\item L'orientabilité de la surface de base  et le genre $g$ déterminent le nombre $g'$ 
de générateurs $v_j$ et le mot $V$ dans la longue relation de $\pi_1(M)$ de la façon suivante:
	\begin{itemize}
	\item quand la surface de base est orientable, i.e. $\type=o_i$, $g'=2g$ et  $V=[v_1,v_2]\ldots[v_{2g-1},v_{2g}]$;
	\item quand la surface de base est non-orientable,  i.e. $\type=n_i$, $g'=g$ et $V=v_1^2\ldots v_g^2$.
	\end{itemize}
\item Le générateur  $h$ correspond à la fibre générique régulière.
\item Les générateurs $q_k$ pour $0\le k\le m$ correspondent aux (possibles) fibres exceptionnelles.
\end{itemize}

Dans ce papier nous utiliserons les notations suivantes.

\begin{nota}\label{nota:cd} Soit $M$ une variété de  Seifert décrite par une liste d'invariants de  Seifert 
  $$\{e; (\type, g); (a_1,b_1),\ldots ,(a_m,b_m)\},$$
et soit $p$ un entier premier.
\begin{itemize}
\item  Notons $a$ le plus petit commum multiple des $a_k$, alors
$$c=\sum_{k=0}^m b_k(a/a_k).$$
\item Le nombre de $a_k$ divisibles par $p$ sera noté $n$.
\begin{itemize}
\item Quand $n=0$, on suppose que $b_k$ est divisible par $p$ si et seulement si $0\le k\le r$;
\item quand $n>0$, on suppose que $a_k$ est divisible par $p$ si et seulement si $0\le k\le n-1$, les indices $k$ sont réordonnés par  p-valuation  décroissante $\nu_p(a_k)$. 
\end{itemize}
\item On distingue trois cas:
	\begin{itemize}
	\item Cas 1, \ \ $n=0$ et $c$ est divisible par $p$;
	\item Cas 2, \ \ $n=0$ et $c$ n'est pas divisible par $p$;
	\item Cas 3, \ \ $n>0$.
	\end{itemize} 
\item Le symbole $*$ est égal à $4g$ pour  $\type =o_i$,  et à $2g$ pour  $\type =n_i.$
\end{itemize}
\end{nota}


\section{Les groupes de cohomologie}\label{section:gps}

\subsection{ Le complexe cellulaire}\label{sub:cell}

\bigskip

La vari\'et\'e de Seifert $M$ admet une d\'ecomposition cellulaire en cellules de dimension de $0$ \` a $3$ qui est d\'ecrite ci-dessous. Voir Figures 2, 3, 4, 5, Section \ref{section:fig} : \\
$\bullet$ une 0-cellule $\sigma$;\\
$\bullet$ des 1-cellules (d'origine et d'extr\' emit\' e $\sigma$) $t_j, q_k,h$;\\
$\bullet$ des 2-cellules :
\begin{itemize}
\item[-] $\delta$ de "bord":

 $\prod[t_{2i-1},t_{2i}]\prod q_k$ pour les Types $o_i$; $\prod t_j^2\prod q_k$ pour les Types $n_i;$
\item[-] $\rho_k$ tores de bords $[h,q_k]$;
\item[-]  $\nu_j$ tores si $\varepsilon_j=1$ ou bouteilles de Klein si $\varepsilon_j=-1$, de bords $ht_jh^{-\varepsilon_j}t_j^{-1}$;
\item[-]  $\mu_k$ disques de bords $w_{a_k,b_k}(q_k,h)$, qui est un mot en $q_k,h$ comportant $a_k$ fois la lettre $q_k$ et $b_k$ fois la lettre $h$, mais dans un ordre tr\`es particulier qui sera pr\' ecis\' e plus loin (Sous-section \ref{sub:wab});
\end{itemize}
$\bullet$ des 3-cellules :
\begin{itemize}
\item[-] $\epsilon$ dont le bord  (en tant que complexe cellulaire) est  pav\' e par deux exemplaires de $\delta$ et de chaque $\nu_j$ et un exemplaire de chaque $\rho_k$,
\item[-] $\zeta_k$ dont le bord est  pav\' e par deux exemplaires de $\mu_k$ et un exemplaire de $\rho_k$. Ce pavage, assez d\' elicat, est li\' e \`a une propri\' et\' e essentielle du mot $w_{a_k,b_k}(q_k,h)$, qui sera expliqu\' e plus loin dans le th\'eor\`eme (\ref{theo:ab}).
\end{itemize}

\subsection{Groupes et g\' en\' erateurs de $H^*(M,\Z_p)$}

\bigskip

La cocha\^ine duale d'une cellule $x$ est not\'ee $\hat x\in C^*(M;\Z)$. Sauf pr\'ecision, les indices $j$ sont des entiers v\'erifiant $1\le j\le g'$ et sont absents si $g=0$.

\begin{theo} \label{theo:gps}{\bf  Pour $p=2$.}\\
 Les groupes de cohomologie \` a coeffcients dans $\Z_2$ sont :
 \begin{itemize}
\item   $H^0(M,\Z_2)=\Z_2$ et $H^3(M,\Z_2)=\Z_2\{\gamma\}.$
\end{itemize}
 $H^1(M,\Z_2)$ et $H^2(M,\Z_2)$ d\'ependent des Cas 1,2,3 et non des Types:
 \noindent -- Cas 1 
\begin{itemize}
\item  $H^1(M,\Z_2)=\Z_2^{g'+1}=\Z_2\{\theta_j, 1\le j\le g';\alpha\} $ o\`u $\theta_j=[\hat t_j]$ et  $\alpha=[\hat h-\sum b_ka_k^{-1}\hat q_k] ;$
\item $H^2(M,\Z_2)=\Z_2^{g'+1}=\Z_2\{\varphi_j, 1\le j\le g';\beta\}$ o\`u $\varphi_j=[\hat\nu_j]$ et $\beta=[\hat\delta]$.
\end{itemize}
-- Cas 2 
\begin{itemize}
\item  $H^1(M,\Z_2)=\Z_2^{g'}=\Z_2\{\theta_j\}$ o\`u $\theta_j=[\hat t_j] ;$ 
\item   $H^2(M,\Z_2)=\Z_2^{g'}=\Z_2\{\varphi_j\}$ o\`u $\varphi_j= [\hat\nu_j]$.
\end{itemize}
-- Cas 3 
\begin{itemize}
\item  $H^1(M,\Z_2)=\Z_2^{g'+n-1}=\Z_2\{\theta_j,1\le j\le g'; \alpha_k, 0<k\le n-1\}$ o\`u $\theta_j=[\hat t_j]$ et  $\alpha_k=[\hat q_k-\hat q_0]$;  \\
\item  $H^2(M,\Z_2)=\Z_2^{g'+n-1}=\Z_2\{\varphi_j,1\le j\le g'; \beta_k, 0<k\le n-1\}$ o\`u $\varphi_j=[\hat\nu_j]$ et  $\beta_k=[\hat\mu_k]$.
\end{itemize}

{\bf Pour $p>2$.}\\
Les groupes de cohomologie \` a coefficients dans $\Z_p$ sont :
\begin{itemize}
\item $H^0(M,\Z_p)=\Z_p$, et $H^3(M,\Z_p)=\Z_p\{\gamma\}$ pour $o_1$ et $n_2$ tandis que $H^3(M,\Z_p)=0$ pour $o_2$,$n_1$,$n_3$,$n_4$.
\end{itemize}
$H^ 1(M,\Z_p)$ et $H^ 2(M,\Z_p)$ d\'ependent des  Cas 1,2,3 et du Type :\\
 {\bf Type $o_1$} Les r\'esultats sont les m\^ emes que lorsque $p=2$ avec $g'=2g$ \'eventuellement nul.\\
 {\bf Type $o_2$}\\
 -- Cas 1,2 
\begin{itemize}
\item   $H^ 1(M,\Z_p)=\Z_p^{2g}=\Z_p\{\theta_j, 1\leq j\leq 2g\}$ o\`u $\theta_j=[\hat t_j];$ 
\item $H^ 2(M,\Z_p)=\Z_p^{2g-1}=\Z_p\{\varphi_j , 2<j\leq 2g; \beta\}$ o\`u $\varphi_j=[\hat\nu_j+(-1)^j\hat\nu_1]$  et $\beta=[\hat\delta]$.
\end{itemize}
-- Cas 3 
\begin{itemize}
\item $H^ 1(M,\Z_p)=\Z_p^{2g+n-1}=\Z_p\{\theta_j, 1\leq j\leq 2g; \alpha_k, 0< k\leq n-1\}$ o\`u $\theta_j=[\hat t_j]$ et  $\alpha_k=[\hat q_k-\hat q_0]$;
\item $H^ 2(M,\Z_p)=\Z_p^{2g+n-1}=\Z_p\{\varphi_j, 2< j\leq 2g; \beta_k, 0\leq k\leq n-1\}$ o\`u
 $\varphi_j=[\hat\nu_j+(-1)^j\hat\nu_1]$  et  $\beta_k=[\hat\mu_k]$.
\end{itemize}
 {\bf  Type $n_1$} \\
-- Cas 1,2 
\begin{itemize}
\item $H^1(M,\Z_p)=\Z_p^g=\Z_p\{\theta_j, 1< j\leq g;\alpha\}$ o\`u  $\theta_j=[\hat t_j-\hat t_1]$ et $\alpha=[{c\over 2a}\hat t_1+\hat h-\sum b_ka_k^{-1}\hat q_k]$, la constante $c$ \'etant \'egale \`a $0$ dans le Cas 1 ; 
 \item $H^2=\Z_p^{g-1}=\Z_p\{\varphi_j, 1< j\leq g\}$ o\`u $\varphi_j=[\hat\nu_j-\hat\nu_1]$. 
 \end{itemize}
 -- Cas 3  
 \begin{itemize}
\item $H^ 1(M,\Z_p)=\Z_p^{g+n-1}=\Z_p\{\theta_j,1< j\leq g;\alpha_k,0\leq k\leq n-1\}$ o\`u $\theta_j=[\hat t_j-\hat t_1]$ et  $\alpha_k=[\hat q_k-\frac{1}{2}\hat t_g]$ ;
\item $H^ 2(M,\Z_p)=\Z_p^{g+n-2}=\Z_p\{\varphi_j, 1< j\leq g;\beta_k, 0< k\leq n-1\}$ o\`u $\varphi_j=[\hat\nu_j-\hat\nu_1]$ et  $\beta_k=[\hat\mu_k]$ .
\end{itemize}
 {\bf Type $n_2$}\\
 -- Cas 1,2,3  
\begin{itemize}
\item $H^1(M,\Z_p)=\Z_p^{g-1+n}=\Z_p\{\theta_j, 1< j\leq g;\alpha_k, 0\leq k\leq n-1\}$ o\`u $\theta_j=[\hat t_j-\hat t_1]$ et   $\alpha_k=[\hat q_k-{1\over 2}\hat t_g]$ ;
\item $H^2(M,\Z_p)=\Z_p^{n+g-1}=\Z_p\{\varphi_j,1< j\leq g;\beta_k,0\leq k\leq n-1\} $  o\`u $\varphi_j=[\hat\nu_j]$ et  $\beta_k=[\hat\mu_k]$.
\end{itemize}
 {\bf Type $n_3$} \\
  -- Cas 1,2,3  
\begin{itemize}
\item $H^ 1(M,\Z_p)$ a les m\^ emes g\'en\'erateurs que pour le Type $n_2 ;$ 
\item $H^2(M,\Z_p)=\Z_p^{g-2+n}=\Z_p\{\varphi_j, 2< j\leq g ;\beta_k, 0\leq k\leq n-1\}$  o\`u $\varphi_j=[\hat\nu_j]$ et  $\beta_k=[\hat\mu_k]$.
\end{itemize}
 {\bf Type $n_4$} \\
 -- Cas 1,2,3  
\begin{itemize}
\item $H^ 1(M,\Z_p)$ a les m\^ emes g\'en\'erateurs que pour le Type $n_2 ;$ \\
\item $H^2(M,\Z_p)=\Z_p^{g-2+n}=\Z_p\{\varphi_3; \varphi_j,3< j\leq g; \beta_k, 0\leq k\leq n-1\}$  o\`u $\varphi_3=[\hat\nu_2-\hat\nu_1]$,  $\varphi_j=[\hat\nu_j]$ et  $\beta_k=[\hat\mu_k]$.
\end{itemize}

\end{theo}

Les quatre lemmes suivants constituent la preuve de ce th\'eor\`eme. Ils  d\'etaillent  les bords des cha\^ ines, les bords des cocha\^ ines et les expressions de $H^ 1$ et $H^ 2$ pour un $p$ premier quelconque.\\

De la d\' ecomposition cellulaire (et des figures qui la pr\' ecisent), on d\' eduit la description suivante des bords des cha\^ ines :
\begin{lem}{ Bord des cha\^\i nes cellulaires}\\
$\del\sigma=0$, $\del t_j=\del q_k=\del h=0$, $\del\rho_k=0$,\cut
$\del\delta=\sum q_k$ pour $o_i$, $\del\delta=2\sum t_j+\sum q_k$ pour $n_i$,\cut
$\del\nu_j=0$ lorsque $\varepsilon_j=1$, $\del\nu_j=2h$ lorsque $\varepsilon_j=-1$,\cut
$\del\mu_k=a_kq_k+b_kh$,\cut
$\del\epsilon=\sum\rho_k$ pour $o_1$, $\del\epsilon=\sum\rho_k+2\sum(-1)^j\nu_j$ pour $o_2$, $\del\epsilon=\sum\rho_k+2\sum_{\epsilon_j=1}\nu_j$ pour $n_i$,\cut
$\del\zeta_k=-\rho_k$.
\end{lem}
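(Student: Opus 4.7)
Le plan est de déduire chacune des formules de bord annoncées directement de la description cellulaire donnée dans la sous-section \ref{sub:cell} (et illustrée par les figures de la section \ref{section:fig}), en abélianisant l'application de recollement de chaque cellule.

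Pour les cellules de petite dimension, tout est quasi-immédiat. La 0-cellule $\sigma$ a un bord nul. Chaque 1-cellule $t_j$, $q_k$, $h$ est une boucle d'origine et d'extrémité $\sigma$, donc son bord est $\sigma-\sigma=0$. Pour les 2-cellules, je lirais le mot attachant fourni dans \ref{sub:cell} et je l'abélianiserais : le commutateur $[h,q_k]$ bord de $\rho_k$ devient nul ; le bord de $\delta$, égal à $\prod[t_{2i-1},t_{2i}]\prod q_k$ pour les types $o_i$ et à $\prod t_j^2\prod q_k$ pour les types $n_i$, donne respectivement $\sum q_k$ et $2\sum t_j+\sum q_k$ ; le mot $ht_jh^{-\varepsilon_j}t_j^{-1}$ bord de $\nu_j$ devient $(1-\varepsilon_j)h$, nul pour $\varepsilon_j=1$ et égal à $2h$ pour $\varepsilon_j=-1$ ; enfin le mot $w_{a_k,b_k}(q_k,h)$ bord de $\mu_k$ contient par construction $a_k$ fois $q_k$ et $b_k$ fois $h$, donnant $a_kq_k+b_kh$.

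L'étape moins automatique concerne les 3-cellules, dont le bord se lit sur le pavage de la 2-sphère-bord avec les signes induits par les orientations relatives. Pour $\zeta_k$, le pavage comporte deux exemplaires de $\mu_k$ d'orientations opposées (dont la contribution est nulle) et un unique exemplaire de $\rho_k$ apparaissant avec un signe négatif, d'où $\partial\zeta_k=-\rho_k$. Pour $\epsilon$, les deux exemplaires de $\delta$ apparaissent systématiquement avec des signes opposés et s'éliminent ; chaque tore $\rho_k$ apparaît une unique fois avec signe positif (puisque $\epsilon$ n'est recollée que d'un seul côté de $\rho_k$) ; il ne reste qu'à déterminer le signe sous lequel chaque couple d'exemplaires de $\nu_j$ intervient.

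L'obstacle principal est ce dernier point : le suivi rigoureux des orientations dans le pavage de $\partial\epsilon$ pour les types non-orientables. Il s'agit de vérifier, cas par cas et en s'appuyant sur les figures de la section \ref{section:fig}, que la règle de contribution des $\nu_j$ coïncide avec la formule annoncée, à savoir facteur $2(-1)^j$ pour le type $o_2$ (l'alternance venant de l'orientation de la surface de base orientable conjuguée à $\varepsilon_j=-1$) et facteur $2$ exactement sur les indices $j$ pour lesquels $\varepsilon_j=1$ dans les types $n_i$, le type $o_1$ se réduisant au cas dégénéré où tous les couples s'annulent. Une fois ce contrôle effectué, toutes les formules du lemme en résultent.
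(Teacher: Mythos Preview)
Your proposal is correct and is exactly the paper's approach: the paper gives no separate proof of this lemma, stating only that it follows from the cellular decomposition of \S\ref{sub:cell} and the figures of \S\ref{section:fig}. Your write-up simply makes explicit the abelianization of the attaching words for the 2-cells and the orientation bookkeeping for the 3-cells that the paper leaves to the reader and to the pictures.
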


Par dualit\'e, on obtient les bords des cocha\^ ines.
\begin{lem}{ Bord des cocha\^\i nes cellulaires}\\
$\del\hat\sigma=0$, $\del\hat\delta=\del\hat\mu_k=0$, $\del\hat\epsilon=\del\hat\zeta_k=0$,\cut
$\del\hat t_j=0$ pour $o_i$, $\del\hat t_j=2\hat\delta$ pour $n_i$,\cut
$\del\hat q_k=\hat\delta+a_k\hat\mu_k$,\cut
$\del\hat h=\sum b_k\hat\mu_k+2\sum_{\varepsilon_j=-1}\hat\nu_j$,\cut
$\del\hat\nu_j=0$ pour $o_1$,$n_2$, et pour $n_1$,$n_3$,$n_4$ lorsque $\varepsilon_j=-1$,\cut
$\del\hat\nu_j=2(-1)^j\hat\epsilon$ pour $o_2$, $\del\hat\nu_j=2\hat\epsilon$ pour $n_1$,$n_3$,$n_4$ lorsque $\varepsilon_j=1$,\cut
$\del\hat\rho_k=\hat\epsilon-\hat\zeta_k$.
\end{lem}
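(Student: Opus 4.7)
Le plan est d'obtenir toutes les formules de ce lemme par simple dualité à partir du lemme précédent. Par définition, le cobord d'une cochaîne cellulaire vérifie $\del\hat x(y)=\hat x(\del y)$ pour toute $(i+1)$-cellule $y$. Autrement dit, $\del\hat x=\sum_y n_{x,y}\hat y$ où $n_{x,y}$ est le coefficient de $x$ dans l'écriture de $\del y$ comme combinaison linéaire des $i$-cellules. Le travail se ramène donc, pour chaque cochaîne $\hat x$, à parcourir toutes les $(i+1)$-cellules et à collecter les coefficients de $x$ dans leurs bords, déjà listés dans le lemme précédent.

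D'abord, $\del\hat\sigma=0$ découle immédiatement de $\del t_j=\del q_k=\del h=0$. Pour les cochaînes de dimension $1$, les seules $2$-cellules candidates sont $\delta$, $\nu_j$, $\rho_k$ et $\mu_k$. Comme $t_j$ n'apparaît que dans $\del\delta$ (et seulement pour les types $n_i$), on obtient $\del\hat t_j=0$ pour $o_i$ et $\del\hat t_j=2\hat\delta$ pour $n_i$. De même, $q_k$ apparaît dans $\del\delta$ avec coefficient $1$ et dans $\del\mu_k$ avec coefficient $a_k$, ce qui donne $\del\hat q_k=\hat\delta+a_k\hat\mu_k$. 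Enfin, $h$ n'apparaît que dans $\del\mu_k$ (coefficient $b_k$) et dans les $\del\nu_j$ pour $\varepsilon_j=-1$ (coefficient $2$), d'où l'expression annoncée pour $\del\hat h$.

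Ensuite, pour les cochaînes de dimension $2$, je regarderais les bords des $3$-cellules $\epsilon$ et $\zeta_k$. Ni $\delta$ ni $\mu_k$ n'y figurent, ce qui fournit directement $\del\hat\delta=0$ et $\del\hat\mu_k=0$. Le cas de $\hat\nu_j$ se scinde selon le type en suivant la discussion par cas du lemme précédent~: nul pour $o_1$, égal à $2(-1)^j\hat\epsilon$ pour $o_2$, et égal à $2\hat\epsilon$ pour $n_i$ quand $\varepsilon_j=1$ (nul sinon). Pour $\hat\rho_k$, on lit le coefficient $1$ dans $\del\epsilon$ et le coefficient $-1$ dans $\del\zeta_k$, donnant $\del\hat\rho_k=\hat\epsilon-\hat\zeta_k$. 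Enfin, les cochaînes de dimension $3$ ont un cobord nul puisque $M$ n'a pas de $4$-cellule, d'où $\del\hat\epsilon=\del\hat\zeta_k=0$.

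Il n'y a pas vraiment d'obstacle technique~: ce lemme est un pur exercice de transposition, et le seul point à surveiller est la gestion soignée des cas selon le type $o_i$ ou $n_i$ (notamment pour $\del\hat t_j$ et $\del\hat\nu_j$) ainsi que des signes $(-1)^j$ dans le type $o_2$. La cohérence globale peut d'ailleurs être contrôlée en vérifiant $\del^2\hat x=0$ sur quelques exemples représentatifs, ce qui offre un garde-fou utile contre une erreur de signe ou d'indexation.
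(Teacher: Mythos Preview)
Your proposal is correct and follows exactly the approach indicated by the paper, which simply states ``Par dualit\'e, on obtient les bords des cocha\^\i nes'' before this lemma without further detail. You have merely made explicit the transposition argument that the paper leaves implicit, and your case-by-case verification is accurate.
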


Quel que soit l'anneau de coefficients $A$, $H^0=A$ est engendr\' e par $1:=[\hat\sigma]$.
Le groupe  $H^3$ est \'egal \` a $A$ pour $o_1$ et $n_2$, et \` a  $A/2A$ pour $o_2$,$n_1$,$n_3$,$n_4$. Il est  engendr\' e par $\gamma:=[\hat\epsilon]=[\hat\zeta_k]$. 

\begin{lem}{ Pr\'esentation du groupe $H^1(M,\Z_p)$}\\
1) Quelque soit l'anneau de coefficients $A$, on a\\
$H^1=\{x\hat h+\sum y_j\hat t_j+\sum z_k\hat q_k\tq x,y_j,z_k\in A, (*)\}$ o\`u la condition $(*)$ de cocycle est
$$\forall k, a_kz_k+b_kx=0,$$
avec en plus,
pour $n_i$, $\sum z_k=0$ ; 
pour $o_i$, $\sum z_k=-2\sum y_j$ ;
et  pour $o_2$,$n_2$,$n_3$,$n_4$, $2x=0$.\\
2) Si l'anneau  $A=\Z$, ou $\Z_p$ avec $p$ premier $>2$, ceci se simplifie  en:\cut
pour $o_2$, $H^1=A^{2g}\times\{\sum z_k\hat q_k\tq z_k\in A, \forall k, a_kz_k=0, \sum z_k=0\}$ ;\cut
pour $n_2$,$n_3$,$n_4$, $H^1=\{\sum y_j\hat t_j+\sum z_k\hat q_k\tq y_j,z_k\in A, \forall k, a_kz_k=0, 2\sum y_j+\sum z_k=0\}$.\\
3) Si  l'anneau $A=\Z_2$, ceci se simplifie pour tous les Types en\cut
$H^1=\{x\hat h+\sum y_j\hat t_j+\sum z_k\hat q_k\tq x,y_j,z_k\in A, \forall k, a_kz_k+b_kx=0, \sum z_k=0\}$.
\end{lem}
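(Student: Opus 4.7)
\medskip

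\noindent\textbf{Esquisse de preuve.} Le plan consiste \`a expliciter le noyau de $\delta\colon C^1(M;A) \to C^2(M;A)$. Puisque le seul 0-simplexe est $\sigma$ et qu'il v\'erifie $\del\sigma = 0$, on a $\delta\hat\sigma = 0$, donc $B^1 = 0$ et $H^1 = Z^1$. Il suffit donc de d\'eterminer les 1-cocycles : j'\'ecrirais une cocha\^\i ne arbitraire $\xi = x\hat h + \sum_j y_j\hat t_j + \sum_k z_k\hat q_k$, puis je calculerais $\delta\xi$ par lin\'earit\'e \`a partir des formules du lemme pr\'ec\'edent donnant $\del\hat h$, $\del\hat t_j$ et $\del\hat q_k$.

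Je regrouperais ensuite $\delta\xi$ selon la base duale $\{\hat\delta,\hat\mu_k,\hat\rho_k,\hat\nu_j\}$ de $C^2$ et j'annulerais chaque coefficient. Le coefficient de $\hat\rho_k$ est automatiquement nul puisqu'aucune des trois cocha\^\i nes $\del\hat h,\del\hat t_j,\del\hat q_k$ ne contient $\hat\rho_k$. Le coefficient de $\hat\mu_k$ vaut $a_k z_k + b_k x$ (la contribution $b_k x$ vient de $\del\hat h$ et $a_k z_k$ de $\del\hat q_k$), d'o\`u la condition $a_k z_k + b_k x = 0$ pour tout $k$. Le coefficient de $\hat\delta$ d\'epend de la formule $\del\hat t_j = 0$ pour les Types $o_i$ et $\del\hat t_j = 2\hat\delta$ pour les Types $n_i$ ; il fournit la contrainte suppl\'ementaire distinguant ces deux familles, combinant $\sum z_k$ (issu de $\del\hat q_k$) et \'eventuellement $2\sum y_j$. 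Enfin, le coefficient d'un $\hat\nu_j$ n'est non trivial que lorsque $\varepsilon_j = -1$ (via le terme $2\sum_{\varepsilon_j=-1}\hat\nu_j$ de $\del\hat h$) et vaut alors $2x$ ; il impose donc $2x = 0$ exactement pour les Types $o_2, n_2, n_3, n_4$. Ceci \'etablit la partie (1).

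Les parties (2) et (3) s'obtiennent par simplification directe. Si $A = \Z$ ou $\Z_p$ avec $p > 2$, le fait que $2$ soit non-diviseur de z\'ero force $x = 0$ pour les Types $o_2, n_2, n_3, n_4$ ; en substituant $x=0$ dans les autres \'equations on supprime tous les termes en $x$ et on obtient la forme annonc\'ee pour chacun de ces Types. Si $A = \Z_2$, la condition $2x = 0$ est automatique et $2\sum y_j$ est nul modulo $2$, de sorte que la contrainte issue de $\hat\delta$ se r\'eduit uniform\'ement \`a $\sum z_k = 0$, d'o\`u la formule unifi\'ee valable pour tous les Types. L'unique difficult\'e est comptable : il s'agit de recenser avec soin toutes les contributions \`a $\delta\xi$ et d'appliquer correctement la dichotomie $\varepsilon_j = \pm 1$ ainsi que la distinction $o_i$ vs $n_i$.
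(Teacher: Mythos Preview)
Your proposal is correct and follows exactly the approach the paper implicitly uses: the lemma is stated in the paper as an immediate consequence of the coboundary formulas for $\del\hat t_j$, $\del\hat q_k$, $\del\hat h$ given just before, and your sketch carries out precisely that computation (noting $B^1=0$, reading off the coefficients of $\hat\mu_k$, $\hat\delta$, $\hat\nu_j$ in $\delta\xi$, then specializing to $p>2$ and $p=2$). There is nothing to add.
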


\begin{lem}{ Pr\'esentation du groupe $H^2(M,\Z_p)$}\\
$H^2=\{x\hat\delta+\sum y_j\hat\nu_j+\sum z_k\hat\mu_k\tq x,y_j,z_k\in A, (*)\}/Im(\del)$ o\`u la condition $(*)$ est\cut
vide pour $o_1$, $n_2$ ; 
 pour $o_2$, $2\sum(-1)^jy_j=0$ ;
pour $n_1$,$n_3$,$n_4$, $2\sum_{\varepsilon_j=1}y_j=0$,\cut
et $Im(\del)$ est engendr\' e par les $\hat\delta+a_k\hat\mu_k$, $\sum b_k\hat\mu_k+2\sum_{\varepsilon_j=-1}\hat\nu_j$, et de plus $2\hat\delta$ pour $n_i$.
\end{lem}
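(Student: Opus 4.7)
The plan is to compute $H^2 = \ker(\del^2)/\mathrm{Im}(\del^1)$ directly from the coboundary formulas of the previous lemma. Recall that $C^2$ is freely generated by $\hat\delta$, the $\hat\nu_j$'s, the $\hat\mu_k$'s and the $\hat\rho_k$'s, while $C^3$ is generated by $\hat\epsilon$ and the $\hat\zeta_k$'s.

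First I would handle the cocycle condition. Writing a generic 2-cochain as
\[
\omega = x\hat\delta + \sum_j y_j\hat\nu_j + \sum_k z_k\hat\mu_k + \sum_k w_k\hat\rho_k
\]
and using $\del\hat\delta = \del\hat\mu_k = 0$, one obtains $\del\omega = \sum_j y_j\,\del\hat\nu_j + \sum_k w_k(\hat\epsilon-\hat\zeta_k)$. The coefficient of each $\hat\zeta_k$ in $\del\omega$ is $-w_k$, which forces $w_k = 0$ identically; hence the $\hat\rho_k$'s never contribute to cocycles, which explains why they are absent from the statement. The remaining requirement is that the coefficient of $\hat\epsilon$ vanish; reading off the three possible formulas for $\del\hat\nu_j$ produces exactly the announced condition $(*)$: empty for $o_1$ and $n_2$ (where every $\del\hat\nu_j$ is zero), $2\sum_j(-1)^j y_j = 0$ for $o_2$, and $2\sum_{\varepsilon_j=1} y_j = 0$ for $n_1, n_3, n_4$.

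Then I would read off $\mathrm{Im}(\del)$ from the coboundaries of the 1-cochain generators $\hat t_j$, $\hat q_k$, $\hat h$: in every type one has $\del\hat q_k = \hat\delta + a_k\hat\mu_k$ and $\del\hat h = \sum_k b_k\hat\mu_k + 2\sum_{\varepsilon_j=-1}\hat\nu_j$, whereas $\del\hat t_j$ contributes $0$ for types $o_i$ but $2\hat\delta$ for types $n_i$. Together these give precisely the generators of $\mathrm{Im}(\del)$ stated in the lemma, the extra generator $2\hat\delta$ appearing exactly when the base surface is non-orientable. Passing to the quotient concludes. There is no real obstacle: the whole computation is mechanical, and the only care required is the type-dependent bookkeeping of $\del\hat\nu_j$ (which governs $(*)$) and of $\del\hat t_j$ (which governs whether $2\hat\delta\in\mathrm{Im}(\del)$), so that the six Seifert types collapse correctly into the sub-cases listed.
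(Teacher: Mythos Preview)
Your proof is correct and follows precisely the approach implicit in the paper: the lemma is stated there as a direct consequence of the coboundary formulas of the preceding lemma, and your argument simply makes that derivation explicit by writing out a generic $2$-cochain, killing the $\hat\rho_k$ components via the $\hat\zeta_k$ coefficients, reading $(*)$ from the $\hat\epsilon$ coefficient, and identifying $\mathrm{Im}(\del)$ from $\del\hat q_k$, $\del\hat h$, $\del\hat t_j$. There is nothing to add.
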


\vspace{0.5cm}


 \section{Le th\'eor\` eme principal}\label{section:theprin}
 
 \subsection{Tableau des dimensions}\label{nota:wab}

\begin{center}
\begin{scriptsize}
\begin{tabular}{|c|c|c|c|} 
\hline
Type& Cas& $H^1$&$H^2$\\
\hline
$o_1$&$1$&$(2g)+(1)$&$(2g)+(1)$ \\ 
&$2$&$(2g)+(0)$&$(2g)+(0)$\\
&$3$&$(2g)+(n-1)$&$(2g)+(n-1 )$\\
\hline
$o_2$&$1,2$&$(2g)+(0)$&$(2g-2)+(1)$\\
&$3$&$(2g)+(n-1)$&$(2g-2)+(n)$\\
\hline
$n_1$&$1,2$&$(g-1)+(1)$&$(g-1)+(0)$\\
&$3$&$(g-1)+(n)$&$(g-1)+(n-1)$\\
\hline
$n_2$&&$(g-1)+(n)$&$(g-1)+(n)$\\
\hline
$n_3,n_4$&&$(g-1)+(n)$&$(g-2)+(n)$\\
\hline
\end{tabular}
\end{scriptsize}
\end{center}
\captionof{table}{\caption {Dimensions de $H^1$ et $H^2$ pour $p>2$. Pour $p=2$ elles sont, pour {\bf tous} les Types, comme le Type $o_1$ pour $p>2$}} \label{capt:dim}


 Le tableau \ref{capt:dim} donne les dimensions de $H^i(M;\Z/p\Z)$. Ces dimensions sont exprim\' ees comme somme de deux parenth\`eses, l'une li\' ee au genre $g'$,  l'autre au nombre $n$ de $a_k$ qui sont divisibles par $p$.
  Lorsque la seconde parenth\`ese est (1), le g\' en\' erateur suppl\'ementaire de $H^1$  est not\' e 
$\alpha=[(c/2a)\hat t_1+ \hat h-\sum b_k a_k^{-1} \hat q_k]$ avec la convention que $c/2a=0$ lorsque  $p=2$ ; celui de $H^2$ est $\beta=[\hat \delta]$.

\subsection{Ennonc\'e du th\'eor\` eme principal}

\bigskip 

Le th\'eor\`eme principal est donn\'e sous forme de tableaux.
\vspace{0.5cm}

\centerline{\Large Lorsque  $p=2$}
\vspace{0.5cm}
\centerline{ Cas 1} 
\vspace{0.5cm}
\captionof{table}{\caption{Les g\'en\'erateurs de $H^ 1$ sont $\theta_j=[\hat t_j], 1\le j\le g'; \alpha=[\hat h-\sum_0^ mb_ka_k^{-1}\hat q_k]$. Les g\'en\'erateurs de $H^ 2$ sont $\varphi_j=[\hat \nu_j], 1\le j\le g', \beta=[\hat \delta].$}}

\vspace{0.5cm}

\newcolumntype{M}[1]{>{\raggedright}m{#1}}

\begin{center}
\begin{tabular}{|c||M{4cm}||c||c|}
\hline

&$\theta_i\cup\theta_j$&$\theta_i\cup\alpha$&{$\alpha\cup\alpha$}\tabularnewline
\hline
\hline
$o_i$&$\theta_{2u}\cup\theta_{2u-1}=\beta$  ; $0$ sinon& \multirow{2}{*}{$\varphi_i$}&\multirow{2}{*}{$\frac{c}{2}\beta+\sum_{\varepsilon_j=-1}\varphi_j$}   \tabularnewline    
\cline{1-2}
 $n_i$&$\beta$ si $i=j$; $0$ sinon& &\tabularnewline
\hline

\end{tabular}
\end{center}

\vspace{0.5cm}

\newcolumntype{M}[1]{>{\raggedright}m{#1}}

\begin{center}
\begin{tabular}{|c||M{5.5cm}||c||c||c|}
\hline

&$\theta_i\cup\varphi_j$&$\theta_i\cup\beta$&$\alpha\cup\varphi_j$  &$\alpha\cup\beta$\tabularnewline
\hline
\hline
$o_1$&\multirow{2}{5.5cm}{$\theta_{2u}\cup\varphi_{2u-1}=\theta_{2u-1}\cup\varphi_{2u}=\gamma$; $0$ sinon}&\multirow{6}{*}{$0$}&$0$&\multirow{6}{*}{$\gamma$} \tabularnewline
\cline{1-1} \cline{4-4}
$o_2$&&&$\gamma$& \tabularnewline
\cline{1-1}\cline{2-2}\cline{4-4}
$n_1$&\multirow{4}{5cm}{$\gamma$ si $i=j$; $0$ sinon}&&$0$&\tabularnewline
\cline{1-1}\cline{4-4}
$n_2$& &&$\gamma$&\tabularnewline
\cline{1-1}\cline{4-4}
$n_3$&&&$\gamma$ si $j\neq 1$; $0$ sinon& \tabularnewline
\cline{1-1}\cline{4-4}
$n_4$&&&$\gamma$ si $j\neq 1,2$; $0$ sinon& \tabularnewline
\hline
\end{tabular}
\end{center}

\vfill\eject

\centerline{  Cas 2} 
\vspace{0.5cm}
\captionof{table}{\caption{Les g\'en\'erateurs de $H^ 1$ sont $\theta_j=[\hat t_j], 1\le j\le g'$. Les g\'en\'erateurs de $H^ 2$ sont $\varphi_j=[\hat \nu_j], 1\le j\le g'.$}}

\vspace{0.5cm}

\newcolumntype{M}[1]{>{\raggedright}m{#1}}

\begin{center}
\begin{tabular}{|c||c|}
\hline

&$\theta_i\cup\theta_j$\tabularnewline
\hline
\hline
$o_i,n_i$&0\tabularnewline
\hline

\end{tabular}
\end{center}

\vspace{0.5cm}

\newcolumntype{M}[1]{>{\raggedright}m{#1}}

\begin{center}
\begin{tabular}{|c||M{3cm}|}
\hline

&$\theta_i\cup\varphi_j$\tabularnewline
\hline
\hline
$o_i$&$\theta_{2u}\cup\varphi_{2u-1}=\theta_{2u-1}\cup\varphi_{2u}=\gamma$; $0$ sinon \tabularnewline
\hline
$n_i$&$\gamma$ si $i=j$; $0$ sinon\tabularnewline
\hline
\end{tabular}
\end{center}

\vspace{1cm}

\centerline{ Cas 3} 
\vspace{0.5cm}
\captionof{table}{\caption{Les g\'en\'erateurs de $H^ 1$ sont $\theta_j=[\hat t_j], 1\le j\le g'; \alpha_k=[\hat q_k-\hat q_0], 0< k\le n-1$. Les g\'en\'erateurs de $H^ 2$ sont $\varphi_j=[\hat \nu_j], 1\le j\le g', \beta_k=[\hat \mu_k],1\leq k\le n-1.$}}

\vspace{0.5cm}

\newcolumntype{M}[1]{>{\raggedright}m{#1}}
\begin{center}
\begin{tabular}{|c||c||c||c|}
\hline

&$\theta_i\cup\theta_j$&$\theta_i\cup\alpha_k$&$\alpha_k\cup\alpha_i$\tabularnewline
\hline
\hline
$o_i,n_i$& $0$ &$0$&$\frac{a_0}{2}\sum_{0<\ell \leq n-1}\beta_\ell+\delta_{k,i}\frac{a_k}{2}\beta_k$\tabularnewline
\hline
\end{tabular}
\end{center}

\vspace{0.5cm}

\newcolumntype{M}[1]{>{\raggedright}m{#1}}

\begin{center}
\begin{tabular}{|c||M{3cm}||c||c||M{3.5cm}|}
\hline
&$\theta_i\cup\varphi_j$&$\theta_i\cup\beta_k$ &$\alpha_k\cup\varphi_j$ &$\alpha_k\cup\beta_i$\tabularnewline
\hline
\hline
$o_i$& $\theta_{2u}\cup\varphi_{2u-1}=\theta_{2u-1}\cup\varphi_{2u}=\gamma$; $0$ sinon&\multirow{2}{*}{$0$}&\multirow{2}{*}{$ 0$} &\multirow{3}{*}{$\gamma$ si $i=k$; $0$ sinon} \tabularnewline
\cline{1-2}
$n_i$& $\gamma$ si $i=j$; $0$ sinon&&  & \tabularnewline
\hline
\end{tabular}
\end{center}

\vfill\eject

\centerline{\Large Lorsque $p>2$}
\vspace{0.5cm}

\centerline{ Cas 1} 
\vspace{0.5cm}
\noindent{\Huge  $o_1$}\\
\captionof{table}{\caption{Les g\'en\'erateurs de $H^ 1$ sont $\theta_j=[\hat t_j], 1\le j\le 2g; \alpha=[\hat h-\sum_0^{n-1}b_ka_k^{-1}\hat q_k]$. Les g\'en\'erateurs de $H^ 2$ sont $\varphi_j=[\hat \nu_j], 1\le j\le 2g, \beta=[\hat \delta].$}}

\vspace{0.5cm}

\newcolumntype{M}[1]{>{\raggedright}m{#1}}

\begin{center}
\begin{tabular}{|M{3cm}||c||c|||M{3cm}||c||c||c|}
\hline
$\theta_i\cup\theta_j$&$\theta_j\cup\alpha$&$\alpha\cup\alpha$&
$\theta_i\cup\varphi_j$&$\theta_i\cup\beta$ &$\alpha\cup\varphi_j$ &$\alpha\cup\beta$\tabularnewline
\hline
\hline
$\theta_{2u-1}\cup\theta_{2u}=\beta$ $0$ sinon&$\varphi_j$&$0$&$\theta_{2u}\cup\varphi_{2u-1}=-\gamma;$ $\theta_{2u-1}\cup\varphi_{2u}=\gamma;$ $0$ sinon &$0$&$0$&$\gamma$\tabularnewline
\hline
\end{tabular}
\end{center}

\vspace{0.5cm}

\noindent{\Huge  $o_2$}\\
\captionof{table}{\caption{Les g\'en\'erateurs de $H^ 1$ sont $\theta_j=[\hat t_j], 1\le j\le 2g.$ Les g\'en\'erateurs de $H^ 2$ sont $\varphi_j=[\hat \nu_j+(-1)^j\hat\nu_1], j>2, \beta=[\hat \delta].$}}

\vspace{0.5cm}

\newcolumntype{M}[1]{>{\raggedright}m{#1}}

\begin{center}
\begin{tabular}{|M{3cm}|||M{4cm}|}
\hline
$\theta_i\cup\theta_j$&\tabularnewline
\hline
\hline
$\theta_{2u-1}\cup\theta_{2u}=\beta$; $0$ sinon &Tous les autres cup-produits sont nuls car $H^ 3(M;\Z_p)=0$\tabularnewline
\hline
\end{tabular}
\end{center}

\vspace{1cm}

\noindent{\Huge  $n_1$}\\
\captionof{table}{\caption{Les g\'en\'erateurs de $H^ 1$ sont $\theta_j=[\hat t_j-\hat t_1], 1< j;$ $\alpha=[\hat h-\sum_0^{n-1}b_ka_k^{-1}\hat q_k]$. Les g\'en\'erateurs de $H^ 2$ sont $\varphi_j=[\hat \nu_j-\hat\nu_1], 1< j.$}}

\vspace{0.5cm}

\newcolumntype{M}[1]{>{\raggedright}m{#1}}

\begin{center}
\begin{tabular}{|c||c||c|||M{4cm}|}
\hline
$\theta_i\cup\theta_j$&$\theta_j\cup\alpha$&$\alpha\cup\alpha$&\tabularnewline
\hline
\hline
$0$& $\varphi_j$&$0$& Tous les autres cup-produits sont nuls car $H^ 3(M;\Z_p)=0$\tabularnewline
\hline
\end{tabular}
\end{center}

\vspace{0.5cm}

\noindent{\Huge  $n_2$}\\
\captionof{table}{\caption{Les g\'en\'erateurs de $H^1$ sont $\theta_j=[\hat t_j-\hat t_1], j>1.$  Les g\'en\'erateurs de $H^ 2$ sont $\varphi_j=[\hat \nu_j], j>1.$}}

\vspace{0.5cm}

\newcolumntype{M}[1]{>{\raggedright}m{#1}}

\begin{center}
\begin{tabular}{|c|||M{3cm}|}
\hline
$\theta_i\cup\theta_j$&
$\theta_i\cup\varphi_j$\tabularnewline
\hline
\hline
$0$ &$\theta_j\cup\varphi_j=\gamma;$ $0$ si $i\neq j$\tabularnewline
\hline
\end{tabular}
\end{center}

\vspace{1cm}

\noindent{\Huge  $n_3$}\\
\captionof{table}{\caption{Les g\'en\'erateurs de $H^1$ sont $\theta_j=[\hat t_j-\hat t_1], j>1.$  Les g\'en\'erateurs de $H^ 2$ sont $\varphi_j=[\hat \nu_j], j>2.$}}

\vspace{0.5cm}

\newcolumntype{M}[1]{>{\raggedright}m{#1}}

\begin{center}
\begin{tabular}{|c|||M{4cm}|}
\hline
$\theta_i\cup\theta_j$&
\tabularnewline
\hline
\hline
$0$ &Tous les autres cup-produits sont nuls car $H^ 3(M;\Z_p)=0$\tabularnewline
\hline
\end{tabular}
\end{center}

\vspace{1cm}

\noindent{\Huge  $n_4$}\\
\captionof{table}{\caption{Les g\'en\'erateurs de $H^1$ sont $\theta_j=[\hat t_j-\hat t_1], j>1.$  Les g\'en\'erateurs de $H^ 2$ sont $\varphi_j=[\hat \nu_j], j>3; [\hat\nu_2-\hat\nu_1]=\varphi_3.$}}

\vspace{0.5cm}

\newcolumntype{M}[1]{>{\raggedright}m{#1}}

\begin{center}
\begin{tabular}{|c|||M{4cm}|}
\hline
$\theta_i\cup\theta_j$&
\tabularnewline
\hline
\hline
$0$ &Tous les autres cup-produits sont nuls car $H^ 3(M;\Z_p)=0$\tabularnewline
\hline
\end{tabular}
\end{center}

\vfill\eject

\centerline{  Cas 2} 
\vspace{0.5cm}
\noindent{\Huge  $o_1$}\\
\captionof{table}{\caption{Les g\'en\'erateurs de $H^ 1$ sont $\theta_j=[\hat t_j], 1\le j\le 2g.$  Les g\'en\'erateurs de $H^ 2$ sont $\varphi_j=[\hat \nu_j], 1\le j\le 2g.$}}

\vspace{0.5cm}

\newcolumntype{M}[1]{>{\raggedright}m{#1}}

\begin{center}
\begin{tabular}{|c|||M{3cm}|}
\hline
$\theta_i\cup\theta_j$&
$\theta_i\cup\varphi_j$ \tabularnewline
\hline
\hline
$0$&$\theta_{2u}\cup\varphi_{2u-1}=-\gamma;$ $\theta_{2u-1}\cup\varphi_{2u}=\gamma$; $0$ sinon\tabularnewline
\hline
\end{tabular}
\end{center}

\vspace{0.5cm}

\noindent{\Huge  $o_2$}\\
\captionof{table}{\caption{Les g\'en\'erateurs de $H^ 1$ sont $\theta_j=[\hat t_j], 1\le j\le 2g.$ Les g\'en\'erateurs de $H^ 2$ sont $\varphi_j=[\hat \nu_j+\hat\nu_1], j>2, \beta=[\hat \delta].$}}

\vspace{0.5cm}

\newcolumntype{M}[1]{>{\raggedright}m{#1}}

\begin{center}
\begin{tabular}{|M{3cm}|||M{4cm}|}
\hline
$\theta_i\cup\theta_j$&\tabularnewline
\hline
\hline
$\theta_{2u-1}\cup\theta_{3u}=\beta$; $0$ sinon &Tous les autres cup-produits sont nuls car $H^ 3(M;\Z_p)=0$\tabularnewline
\hline
\end{tabular}
\end{center}

\vspace{1cm}

\noindent{\Huge  $n_1$}\\
\captionof{table}{\caption{Les g\'en\'erateurs de $H^ 1$ sont $\theta_j=[\hat t_j-\hat t_1], 1< j; \alpha=[\frac{c}{2a}\hat t_1+\hat h-\sum_0^{n-1}b_ka_k^{-1}\hat q_k]$. Les g\'en\'erateurs de $H^ 2$ sont $\varphi_j=[\hat \nu_j-\hat\nu_1], j>1.$}}

\vspace{0.5cm}

\vspace{0.5cm}

\newcolumntype{M}[1]{>{\raggedright}m{#1}}

\begin{center}
\begin{tabular}{|c||c||c|||M{4cm}|}
\hline
$\theta_i\cup\theta_j$&$\theta_j\cup\alpha$&$\alpha\cup\alpha$&\tabularnewline
\hline
\hline
$0$& $\varphi_j$&$0$& Tous les autres cup-produits sont nuls car $H^ 3(M;\Z_p)=0$\tabularnewline
\hline
\end{tabular}
\end{center}

\vspace{1cm}

\noindent{\Huge  $n_2$}\\
\captionof{table}{\caption{Les g\'en\'erateurs de $H^1$ sont $\theta_j=[\hat t_j-\hat t_1], j>1.$  Les g\'en\'erateurs de $H^ 2$ sont $\varphi_j=[\hat \nu_j], j>1.$}}

\vspace{0.5cm}

\newcolumntype{M}[1]{>{\raggedright}m{#1}}

\begin{center}
\begin{tabular}{|c|||M{2.7cm}|}
\hline
$\theta_i\cup\theta_j$&
$\theta_i\cup\varphi_j$\tabularnewline
\hline
\hline
$0$ &$\theta_i\cup\varphi_i=\gamma ;$; $0$ si $i\neq j$ \tabularnewline
\hline
\end{tabular}
\end{center}

\vspace{1cm}

\noindent{\Huge  $n_3$}\\
\captionof{table}{\caption{Les g\'en\'erateurs de $H^1$ sont $\theta_j=[\hat t_j-\hat t_1], j>1.$  Les g\'en\'erateurs de 
$H^ 2$ sont $\varphi_j=[\hat \nu_j], j>2.$}}

\vspace{0.5cm}

\newcolumntype{M}[1]{>{\raggedright}m{#1}}

\begin{center}
\begin{tabular}{|c|||M{4cm}|}
\hline
$\theta_i\cup\theta_j$&
\tabularnewline
\hline
\hline
$0$ &Tous les autres cup-produits sont nuls car $H^ 3(M;\Z_p)=0$\tabularnewline
\hline
\end{tabular}
\end{center}

\vspace{1cm}

\noindent{\Huge  $n_4$}\\
\captionof{table}{\caption{Les g\'en\'erateurs de $H^1$ sont $\theta_j=[\hat t_j-\hat t_1], j>1.$  Les g\'en\'erateurs de $H^ 2$ sont $\varphi_j=[\hat \nu_j], j>3; [\hat\nu_2-\hat\nu_1].$}}

\vspace{0.5cm}

\newcolumntype{M}[1]{>{\raggedright}m{#1}}

\begin{center}
\begin{tabular}{|c|||M{4cm}|}
\hline
$\theta_i\cup\theta_j$&
\tabularnewline
\hline
\hline
$0$ &Tous les autres cup-produits sont nuls car $H^ 3(M;\Z_p)=0$\tabularnewline
\hline
\end{tabular}
\end{center}

\vfill\eject

\centerline{  Cas 3} 

\vspace {0.5cm}

\noindent{\Huge  $o_1$}\\
\captionof{table}{\caption{Les g\'en\'erateurs de $H^ 1$ sont $\theta_j=[\hat t_j], 1\le j\le 2g$ et $\alpha_k=[\hat q_k-\hat q_0], 1\leq k\leq n-1$. Les g\'en\'erateurs de $H^ 2$ sont $\varphi_j=[\hat \nu_j], 1\le j\le 2g, \beta_k=[\hat \mu_k], 0\leq k\leq n-1.$}}

\vspace{0.5cm}

\newcolumntype{M}[1]{>{\raggedright}m{#1}}

\begin{center}
\begin{tabular}{|c||c||c|||M{2.4cm}||c||M{1.2cm}||M{1.2cm}|}
\hline
$\theta_i\cup\theta_j$&$\theta_j\cup\alpha_k$&$\alpha_k\cup\alpha_j$&
$\theta_i\cup\varphi_j$&$\theta_i\cup\beta_k$ &$\alpha_k\cup\varphi_j$ &$\alpha_k\cup\beta_j$\tabularnewline
\hline
\hline
 $0$ &$0$&$0$&$\theta_{2u}\cup\varphi_{2u-1}=-\gamma;$ $\theta_{2u-1}\cup\varphi_{2u}=\gamma$; $0$ sinon&$0$&$\alpha_k\cup\varphi_g=-1/2\gamma$ pour tout $k\geq 1$; $0$ sinon&$b_k^{-1}\gamma$, si $j=k$; $0$ sinon\tabularnewline
\hline
\end{tabular}
\end{center}

\vspace{1cm}

\noindent{\Huge  $o_2$}\\
\captionof{table}{\caption{Les g\'en\'erateurs de $H^ 1$ sont $\theta_j=[\hat t_j], 1\le j\le 2g ;\alpha_k=[\hat q_k-\hat q_0], 1\leq k\leq n-1$. Les g\'en\'erateurs de $H^ 2$ sont $\varphi_j=[\hat \nu_j-(-1)^j\hat\nu_1], j>2, \beta_k=[\hat \mu_k], 0\leq k\leq n-1.$}}

\vspace{0.5cm}

\newcolumntype{M}[1]{>{\raggedright}m{#1}}

\begin{center}
\begin{tabular}{|c||c||c|||M{3cm}|}
\hline
$\theta_i\cup\theta_j$&$\theta_j\cup\alpha_k$&$\alpha_k\cup\alpha_j$&
\tabularnewline
\hline
\hline
$0$ &$0$&$0$&Tous les autres cup-produits sont nuls car $H^ 3(M;\Z_p)=0$\tabularnewline
\hline
\end{tabular}
\end{center}

\vfill\eject

\noindent{\Huge  $n_1$}\\
\captionof{table}{\caption{Les g\'en\'erateurs de $H^ 1$ sont $\theta_j=[\hat t_j-\hat t_1], 1< j\leq g; \alpha_k=[\hat q_k-\frac{1}{2}\hat  t_g], 1\leq k\leq n-1 $. Les g\'en\'erateurs de $H^ 2$ sont $\varphi_j=[\hat \nu_j-\hat\nu_1], j>1$ et $\beta_k=[\hat \mu_k], 1\leq k\leq n-1.$}}

\vspace{0.5cm} 

\newcolumntype{M}[1]{>{\raggedright}m{#1}}

\begin{center}
\begin{tabular}{|c||c||c|||M{3cm}|}
\hline
$\theta_i\cup\theta_j$&$\theta_j\cup\alpha_k$&$\alpha_k\cup\alpha_j$&
\tabularnewline
\hline
\hline
$0$ &$0$&$0$&Tous les autres cup-produits sont nuls car $H^ 3(M;\Z_p)=0$\tabularnewline
\hline
\end{tabular}
\end{center}

\vspace{1cm}

\noindent{\Huge  $n_2$}\\
\captionof{table}{\caption{Les g\'en\'erateurs de $H^1$ sont $\theta_j=[\hat t_j-\hat t_g], j>1; \alpha_k=[\hat q_k- \frac{1}{2}\hat t_1], 0\leq k\leq n-1$. Les g\'en\'erateurs de $H^ 2$ sont $\varphi_j=[\hat \nu_j], j>1, \beta_k=[\hat \mu_k], 0\leq k\leq n-1.$}}

\vspace{0.5cm}

\newcolumntype{M}[1]{>{\raggedright}m{#1}}

\begin{center}
\begin{tabular}{|c||c||c|||M{2.4cm}||c||M{1.2cm}||M{1.2cm}|}
\hline
$\theta_i\cup\theta_j$&$\theta_j\cup\alpha_k$&$\alpha_k\cup\alpha_j$&
$\theta_i\cup\varphi_j$&$\theta_i\cup\beta_k$ &$\alpha_k\cup\varphi_j$ &$\alpha_k\cup\beta_j$\tabularnewline
\hline
\hline
 $0$ &$0$&$0$&$ \theta_i\cup\varphi_i=\gamma$; $0$ si $i\neq j$ &$0$&$\alpha_k\cup\varphi_g=-1/2\gamma$ pour tout $k\geq 1$; $0$ sinon&$b_k^{-1}\gamma$, si $j=k$; $0$ sinon\tabularnewline
\hline
\end{tabular}
\end{center}

\vfill\eject

\noindent{\Huge  $n_3$}\\
\captionof{table}{\caption{Les g\'en\'erateurs de $H^1$ sont $\theta_j=[\hat t_j-\hat t_1], j>1; \alpha_k=[\hat q_k- \frac{1}{2}\hat t_1], 0\leq k\leq n-1$. Les g\'en\'erateurs de $H^ 2$ sont $\varphi_j=[\hat \nu_j], j>2, \beta_k=[\hat \mu_k], 0\leq k\leq n-1.$}}

\vspace{0.5cm}

\newcolumntype{M}[1]{>{\raggedright}m{#1}}

\begin{center}
\begin{tabular}{|c||c||c|||M{3cm}|}
\hline
$\theta_i\cup\theta_j$&$\theta_j\cup\alpha_k$&$\alpha_k\cup\alpha_j$&
\tabularnewline
\hline
\hline
$0$ &$0$&$0$&Tous les autres cup-produits sont nuls car $H^ 3(M;\Z_p)=0$\tabularnewline
\hline
\end{tabular}
\end{center}

\vspace{1cm}

\noindent{\Huge  $n_4$}\\
\captionof{table}{\caption{Les g\'en\'erateurs de $H^1$ sont $\theta_j=[\hat t_j-\hat t_1], j>1; \alpha_k=[\hat q_k- \frac{1}{2}\hat t_1], 0\leq k\leq n-1$. Les g\'en\'erateurs de $H^ 2$ sont $\varphi_j=[\hat \nu_j], j>3; [\hat\nu_2-\hat\nu_1], \beta_k=[\hat \mu_k], 0\leq k\leq n-1.$}}

\vspace{0.5cm}

\newcolumntype{M}[1]{>{\raggedright}m{#1}}

\begin{center}
\begin{tabular}{|c||c||c|||M{3cm}|}
\hline
$\theta_i\cup\theta_j$&$\theta_j\cup\alpha_k$&$\alpha_k\cup\alpha_j$&
\tabularnewline
\hline
\hline
$0$ &$0$&$0$&Tous les autres cup-produits sont nuls car $H^ 3(M;\Z_p)=0$\tabularnewline
\hline
\end{tabular}
\end{center}

La suite de cet article est la preuve de ce th\'eor\` eme.

\section{D\'ecomposition simpliciale}\label{section:decomsimp}

\bigskip

Avant de d\'ecrire le d\'ecoupage simpliciale, nous donnons la d\'efinition et les propri\'et\'es du mot $w_{a_k,b_k}(q_k,h)$ qui permet de paver la sph\`ere bordant $\zeta_k$ comme d\'ecrit dans \ref{sub:cell}.

\subsection{ D\' efinition et propri\' et\' es de $w_{\alpha,\beta}$}\label{sub:wab}

Le cas $a_k=1,b_k\leq 0$ sera tr\`es simple, mais dans le cas g\' en\' eral $a_k,b_k>0$, pour pouvoir paver comme \' evoqu\' e 	dans \ref{sub:cell} la sph\`ere bordant $\zeta_k$, le bord $w_{a_k,b_k}(q_k,h)$ de $\mu_k$ doit \^etre un mot tel qu'en effectuant sur ce mot une certaine permutation circulaire et en rempla\c cant un certain $hq_k$ par $q_kh$, on retombe sur le mot de d\' epart. C'est cette propri\' et\' e qui permet le pavage de la sph\`ere par deux exemplaires de $\mu_k$ et un exemplaire de $\rho_k$ pour former le bord de la 3-cellule $\zeta_k$.

\vfill\eject

\begin{defi} Le mot
 $w_{\alpha,\beta}$ (pour $\alpha,\beta$ premiers entre eux) est d\' efini r\' ecursivement par~:\\
$w_{1,0}(a,t)=a$, $w_{0,1}(a,t)=t$, $w_{\alpha+\beta,\beta}(a,t)=w_{\alpha,\beta}(a,at)$, $w_{\alpha,\alpha+\beta}(a,t)=w_{\alpha,\beta}(at,t)$,\\
si bien que le mot $w_{\alpha,\beta}(a,t)$ contient $\alpha$ fois la lettre $a$ et $\beta$ fois la lettre $t$.
\end{defi}

\begin{theo}\label{theo:ab}
Soient $\alpha,\beta,u,v$ entiers tels que $$\alpha u-\beta v=1,\qquad 0<u\leq\beta,\qquad 0\leq v<\alpha.$$ Alors
$$w_{\alpha,\beta}(a,t)=w_{\alpha-v,\beta-u}(a,t)w_{v,u}(a,t)=(w_{v,u}(a,t)t^{-1})at(a^{-1}w_{\alpha-v,\beta-u}(a,t)).$$
\end{theo}

\begin{proof}

 La preuve est par induction sur $\min(\alpha,\beta)$ qui, vues les hypoth\`eses, est supérieur où égal à 1.\\

Si $\alpha=1$ alors $(u,v)=(1,0)$, $w_{\alpha,\beta}(a,t)=at^\beta$, $w_{\alpha-v,\beta-u}(a,t)=at^{\beta-1}$, $w_{v,u}(a,t)=t$, et on a bien $(at^{\beta-1})t=at^\beta$ et $(tt^{-1})at(a^{-1}at^{\beta-1})=1.at.t^{\beta-1}=at^\beta$.

Si $\beta=1$ alors $(u,v)=(1,\alpha-1)$, $w_{\alpha,\beta}(a,t)=a^\alpha t$, $w_{\alpha-v,\beta-u}(a,t)=a$, $w_{v,u}(a,t)=a^{\alpha-1}t$, et on a bien $a(a^{\alpha-1}t)=a^\alpha t$ et $(a^{\alpha-1}tt^{-1})at(a^{-1}a)=a^{\alpha-1}.at.1=a^\alpha t$.

Si $\alpha,\beta$ sont tous deux strictement supérieur à 1, alors (comme ils sont premiers entre eux) ils sont distincts, donc deux cas se pr\' esentent~: $1<\beta<\alpha$ ou $1<\alpha<\beta$.

Si $1<\beta<\alpha$, soient $q$ le quotient et $r$ le reste de la division euclidienne de $\alpha$ par $\beta$. Alors $w_{\alpha,\beta}(a,t)=w_{r+q\beta,\beta}(a,t)=w_{r,\beta}(a,a^qt)$. L'hypoth\`ese d'induction appliqu\' ee au couple $(r,\beta)$ et aux entiers $u',v'$ tels que $ru'-\beta v'=1,0<u'\leq\beta,0\leq v'<r$ donne alors~:\cut
$w_{\alpha,\beta}(a,t)=w_{r,\beta}(a,a^qt)$ est \' egal d'une part \`a $w_{r-v',\beta-u'}(a,a^qt)w_{v',u'}(a,a^qt)$, d'autre part \`a\cut $(w_{v',u'}(a,a^qt)(a^qt)^{-1})(a.a^qt)(a^{-1}w_{r-v',\beta-u'}(a,a^qt))=$\cut$(w_{v',u'}(a,a^qt)t^{-1})at(a^{-1}w_{r-v',\beta-u'}(a,a^qt))$.\cut Par ailleurs, le couple $(u,v)$ associ\' e \`a $(\alpha,\beta)$ se d\' eduit du couple $(u',v')$ associ\' e \`a $(r,\beta)$ par $u=u',v=qu'+v'$. Il ne reste plus qu'\`a remarquer que $w_{r-v',\beta-u'}(a,a^qt)=w_{\alpha-v,\beta-u}(a,t)$ et que $w_{v',u'}(a,a^qt)=w_{v,u}(a,t)$.

Le cas $1<\alpha<\beta$ est absolument similaire.
\end{proof}

\begin{nota}\label{nota:w} Dans la suite, nous appliquerons ce th\' eor\`eme \`a $\alpha=a_k,\beta=b_k$ et noterons $u_k,v_k$ les entiers $u,v$ correspondants. En notant $w_{a_k,b_k}(q_k,h)$ sous la forme $x_{k,1}\ldots x_{k,z_k}$ avec {\rm les $x_{k,i}$ \' egaux \`a $q_k$ (pour $a_k$ d'entre eux dont le premier) ou $h$ (pour $b_k$ d'entre eux dont le dernier) (donc $z_k=a_k+b_k$),} le th\' eor\`eme exprime que pour $w_k=z_k-u_k-v_k+1$, le mot $w_{a_k,b_k}(q_k,h)$ est aussi \' egal \`a $x_{k,w_k}\ldots x_{k,z_k-1}q_khx_{k,2}\ldots x_{k,w_k-1}$, et que de plus, le morceau $x_{k,w_k}\ldots x_{k,z_k}$ de ce mot contient $v_k$ fois $q_k$ et $u_k$ fois $h$.

Dans le cas $b_k\leq 0$ (donc $a_k=1$), nous poserons $u_k=1,v_k=0,w_k=z_k=1+|b_k|$, et $x_{k,1}=q_k$, $x_{k,\ell}=h$ pour $2\leq\ell\leq z_k$.
\end{nota}

Une approche alternative aux mots $w_{\alpha,\beta}$ peut être trouvée dans les références \cite{oz}, \cite{ltz}, \cite{gr}, \cite{hmz}.

\subsection{ D\' ecoupage simplicial}\label{coup}

Transformons ce complexe cellulaire en complexe simplicial en rajoutant~:\\
$\bullet$ un centre et des rayons aux 2-cellules $\delta$ et $\mu_k$, pour remplacer chacune par une juxtaposition de triangles ;\\
$\bullet$ une ``diagonale'' aux $\nu_j,\rho_k$, pour remplacer chacun par deux triangles ;\\
$\bullet$ pour chacune des 3-cellules $\epsilon,\zeta_k$, dont le bord est une sph\`ere pav\' ee par les 2-simplexes d\' ej\`a construits : un centre, des rayons joignant ce centre aux sommets marqu\' es sur la sph\`ere ; des triangles joignant ce centre aux ar\^etes marqu\' ees sur la sph\`ere, de mani\`ere \`a remplacer chaque 3-cellule par une juxtaposition de t\' etra\`edres.

Plus pr\' ecis\' ement, on remplace la d\' ecomposition cellulaire ci-dessus par la d\' ecompo\-sition simpliciale suivante~:

\vspace {0.5cm}

\noindent {\bf 1)} {\bf Le 0-simplexe $\sigma$ et les 1-simplexes $t_j,q_k,h,$}. 

\vspace {0.5cm}

\noindent {\bf 2)} {\bf D\'ecoupage des $\rho_k$, Figure 6} : 

\vspace {0.5cm} 

\noindent  $\bullet$ des 1-simplexes $g_k$ (d'origine et d'extr\' emit\' e $\sigma$) et des 2-simplexes $\rho_{k,1},\rho_{k,2}$, (de faces respectives $(h,g_k,q_k),(q_k,g_k,h)$).

\vspace {0.5cm}

\noindent {\bf 3)} {\bf D\' ecoupage  des $\nu_j$, Figures 7, 8, 9} :

\vspace {0.5cm} 

\noindent  $\bullet$ des 1-simplexes $f_j$ (d'origine et d'extr\' emit\' e $\sigma$) et des 2-simplexes $\nu_{j,2}$ (de faces $(t_j,f_j,h)$) et $\nu_{j,1}$ (de faces $(h,f_j,t_j)$ si $\varepsilon_j=1$,
$(h,t_j,f_j)$ si $\varepsilon_j=-1$).

\vspace {0.5cm} 

\noindent {\bf 4)}{\bf  D\' ecoupage de $\delta$, Figures 10, 11}. Dans le d\' ecoupage de $\delta,\epsilon$ il faudra distinguer les types $o_1$, $o_2$, $n_1$ \`a $n_4$ ;  

\vspace {0.5cm}

\noindent $\bullet$ Un 0-simplexe $a$, des 1-simplexes $e_0,\ldots,e_{*+m}$ (d'origine $a$ et d'extr\' emit\' e $\sigma$) ;

\vspace {0.5cm}

\noindent  $\bullet$ des 2-simplexes $\delta_0,\ldots,\delta_{*+m}$, plus pr\' ecis\' ement~:

\vspace {0.5cm}

 $-$ pour les Types $o_1$, $o_2$~: $\delta_i, $ de faces, respectivement \\
$(t_1,e_1,e_0),(t_2,e_2,e_1),(t_1,e_2,e_3),(t_2,e_3,e_4), \ldots,(q_0,e_{4g+1},e_{4g}),\ldots 
(q_m,e_0,e_{4g+m}) ;$

\vspace {0.5cm}

$-$ pour les Types $n_1$ \`a $n_4$~: \\
$(t_1,e_1,e_0),(t_1,e_2,e_1),\ldots,(q_0,e_{2g+1},e_{2g}),\ldots
(q_m,e_0,e_{2g+m}).$

\vspace {0.5cm}

\noindent{\bf 5)} {\bf D\' ecoupage de chaque $\mu_k$, Figures 12, 13}. Dans le d\' ecoupage de $\mu_k,\zeta_k$ il faudra distinguer le cas particulier $b_k\leq 0$ (et $a_k=1$) du cas g\' en\' eral : 

\vspace {0.5cm}

\noindent $\bullet$ Un 0-simplexe $c_k$, des 1-simplexes $p_{k,1},\ldots,p_{k,z_k}$ (d'origine $c_k$ et d'extr\' e\-mit\' e $\sigma$) ;
 
\vspace {0.5cm}

 \noindent $\bullet$ des 2-simplexes $\mu_{k,1},\ldots,\mu_{k,z_k}$, de faces~: 
\begin{itemize}
\item[-] si $b_k>0$~: $(x_{k,1},p_{k,2},p_{k,1}),\ldots,(x_{k,z_k},p_{k,1},p_{k,z_k})$
\item[-] si $b_k<0$~: $(q_k,p_{k,2},p_{k,1}),(h,p_{k,2},p_{k,3}),\ldots,(h,p_{k,z_k},p_{k,1})$
\item[-] si $b_k=0$~: $(q_k,p_{k,1},p_{k,1})$.
\end{itemize}

\vspace{0.5cm}

\noindent{\bf 6)}{\bf  D\' ecoupage de $\epsilon$, Figures 14, \`a 18}:

\vspace {0.5cm}

\noindent $\bullet$ un 0-simplexe $b$;

\vspace {0.5cm}

\noindent $\bullet$ des 1-simplexes $A^+,A^-$ (d'origine $b$ et d'extr\' emit\' e $a$);

\vspace {0.5cm}

\noindent $\bullet$ des 1-simplexes $S_0^+,\ldots,S_{*+m}^+,S_0^-,\ldots,S_{*+m}^-$ (d'origine $b$ et d'extr\' emit\' e $\sigma$),\\
 et des 2-simplexes $E_0^+,\ldots,E_{*+m}^+,E_0^-,\ldots,E_{*+m}^-$ avec $E_\ell^\pm$ de faces $(e_\ell,S_\ell^\pm,A^\pm)$;
 
 \vspace {0.5cm}
 
\noindent $\bullet$ des 2-simplexes $T_0^+,\ldots,T_{*+m}^+,T_0^-,\ldots T_{*+m}^-$,
plus pr\' ecis\' ement~:

\vspace {0.5cm}

- pour les Types $o_1$, $o_2$~: $T_0^\pm,\ldots,T_{4g+m}^\pm$ de faces\\
$(t_1,S_1^\pm,S_0^\pm),(t_2,S_2^\pm,S_1^\pm),(t_1,S_2^\pm,S_3^\pm),(t_2,S_3^\pm,S_4^\pm),\ldots,(q_m,S_0^\pm,S_{4g+m}^\pm)$;

\vspace {0.5cm}

- pour les Types $n_1$ \`a $n_4$~: $T_0^\pm,\ldots,T_{2g+m}^\pm$ de faces\\
$(t_1,S_1^\pm,S_0^\pm),(t_1,S_2^\pm,S_1^\pm),\ldots,(q_m,S_0^\pm,S_{2g+m}^\pm)$.

\vspace {0.5cm}

\begin{nota} Ici la notation $T_0^\pm$ de faces $(t_1, S_1^\pm,S_0^\pm)$ signifie que les faces de $T_0^+$ sont 
$(t_1, S_1^+,S_0^+)$ et celles de $T_0^-$ sont $(t_1, S_1^-,S_0^-)$ etc.
\end{nota}

\vspace {0.5cm}

\noindent $\bullet$ des 2-simplexes $H_0,\ldots,H_{*+m}$ :\\
les faces de $H_\ell$ \' etant en g\' en\' eral $(h,S_\ell^+,S_\ell^-)$, mais \' etant $(h,S_\ell^-,S_\ell^+)$ si $\ell=2j-1<*$ avec $\varepsilon_j=-1$, i.e. dans les Types suivants~: Type $o_2$, $\ell$ impair $<4g$~; Type $n_2$, $\ell$ impair $<2g$~; Type $n_3$, $\ell$ impair, $3\leq\ell<2g$~; Type $n_4$, $\ell$ impair, $5\leq\ell<2g$;

\vfill\eject

\noindent $\bullet$ des 2-simplexes $F_0,\ldots,F_{*+m}$, de faces:

\vspace {0.5cm}

-  pour le  Type $o_1$~: $(f_1,S_1^+,S_0^-),(f_2,S_2^+,S_1^-),(f_1,S_2^+,S_3^-),$\\ 
$(f_2,S_3^+,S_4^-),\ldots,(g_m,S_0^+,S_{4g+m}^-)$;

\vspace {0.5cm}

- pour le Type $o_2$~: $(f_1,S_1^+,S_0^-),(f_2,S_2^-,S_1^+),(f_1,S_2^-,S_3^+),$\\
$(f_2,S_3^+,S_4^-),\ldots,(g_m,S_0^+,S_{4g+m}^-)$;

\vspace {0.5cm}

- pour le Type $n_1$~: $(f_1,S_1^+,S_0^-),(f_1,S_2^+,S_1^-),$ 
$\ldots,(g_m,S_0^+,S_{2g+m}^-)$;

\vspace {0.5cm}

- pour le Type $n_2$~: $(f_1,S_1^+,S_0^-),(f_1,S_2^-,S_1^+),$ 
$\ldots,(g_m,S_0^+,S_{2g+m}^-)$;

\vspace {0.5cm}

- pour le  Type $n_3$~: $(f_1,S_1^+,S_0^-),(f_1,S_2^+,S_1^-),(f_2,S_3^+,S_2^-),$\\
$(f_2,S_4^-,S_3^+), \ldots,(g_m,S_0^+,S_{2g+m}^-)$;

\vspace {0.5cm}

- pour le  Type $n_4$~: $(f_1,S_1^+,S_0^-),(f_1,S_2^+,S_1^-),(f_2,S_3^+,S_2^-),(f_2,S_4^+,S_3^-),$\\
$(f_3,S_5^+,S_4^-),(f_3,S_6^-,S_5^+),\ldots,(g_m,S_0^+,S_{2g+m}^-)$;

\vspace {0.5cm}

\noindent $\bullet$ des 3-simplexes $D_0^+,\ldots,D_{*+m}^+,D_0^-,\ldots,D_{*+m}^-$, plus pr\' ecis\' ement~:

\vspace {0.5cm}

- pour les Types $o_1$, $o_2$~: $D_0^\pm,\ldots,D_{4g+m}^\pm$ de faces\\ 
$(\delta_0,T_0^\pm,E_1^\pm,E_0^\pm),(\delta_1,T_1^\pm,E_2^\pm,E_1^\pm),(\delta_2,T_2^\pm,E_2^\pm,E_3^\pm),(\delta_3,T_3^\pm,E_3^\pm,E_4^\pm),$\\
$\ldots,(\delta_{4g+m},T_{4g+m}^\pm,E_0^\pm,E_{4g+m}^\pm)$;

\vspace {0.5cm}

- pour les Types $n_1$ \`a $n_4$~: $D_0^\pm,\ldots,D_{2g+m}^\pm$ de faces \\
$(\delta_0,T_0^\pm,E_1^\pm,E_0^\pm),\ldots,(\delta_{2g+m},T_{2g+m}^\pm,E_0^\pm,E_{2g+m}^\pm)$;

\vspace {0.5cm}

\noindent $\bullet$ des 3-simplexes $N_{j,1},N'_{j,1},N_{j,2},N'_{j,2}$ de faces, 

\vspace {0.5cm}

- pour le Type $o_1$~:\\
si $j$ impair, $(\nu_{j,1},H_{2j-1},F_{2j-2},T_{2j-2}^-),(\nu_{j,1},H_{2j},F_{2j},T_{2j}^-),$\\
$(\nu_{j,2},T^ +_{2j-2},F_{2j-2},H_{2j-2})(\nu_{j,2},T^+_{2j},F_{2j},H_{2j+1})$\\
 si $j$ pair,
$(\nu_{j,1},H_{2j-2},F_{2j-3},T_{2j-3}^-),(\nu_{j,1},H_{2j-1},F_{2j-1},T_{2j-1}^-),$\\
$(\nu_{j,2},T_{2j-3}^+,F_{2j-3},H_{2j-3}) (\nu_{j,2},T_{2j-1}^+,F_{2j-1},H_{2j})$;

\vspace {0.5cm}

- pour le Type $o_2$~:\\
si $j$ impair, $(\nu_{j,1},H_{2j-1},T_{2j-2}^-,F_{2j-2}),(\nu_{j,1},H_{2j},T_{2j}^+,F_{2j}),$\\
$(\nu_{j,2},T_{2j-2}^+,F_{2j-2},H_{2j-2}) (\nu_{j,2},T_{2j}^-,F_{2j},H_{2j+1})$,\\ 
si $j$ pair,
$(\nu_{j,1},H_{2j-2},T_{2j-3}^+,F_{2j-3}),(\nu_{j,1},H_{2j-1},T_{2j-1}^-,F_{2j-1}),$\\
$(\nu_{j,2},T_{2j-3}^-,F_{2j-3},H_{2j-3}),(\nu_{j,2},T_{2j-1}^+,F_{2j-1},H_{2j})$;

\vfill\eject

- pour les  Types $n_1$ \`a $n_4$~:\\
 si $\varepsilon_j=1$,
$(\nu_{j,1},H_{2j-1},F_{2j-2},T_{2j-2}^-),(\nu_{j,1},H_{2j},F_{2j-1},T_{2j-1}^-),$\\
$(\nu_{j,2},T_{2j-2}^+,F_{2j-2},H_{2j-2}),(\nu_{j,2},T_{2j-1}^+,F_{2j-1},H_{2j-1})$,\\ 
si $\varepsilon_j=-1$,
$(\nu_{j,1},H_{2j-1},T_{2j-2}^-,F_{2j-2}),(\nu_{j,1},H_{2j},T_{2j-1}^+,F_{2j-1}),$\\
$(\nu_{j,2},T_{2j-2}^+,F_{2j-2},H_{2j-2}),(\nu_{j,2},T_{2j-1}^-,F_{2j-1},H_{2j-1})$;

\vspace {0.5cm}

\noindent $\bullet$ des 3-simplexes $R_{k,1},R_{k,2}$,\\ 
de faces
$(\rho_{k,1},H_{*+k+1},F_{*+k},T_{*+k}^-),(\rho_{k,2},T_{*+k}^+,F_{*+k},H_{*+k})$\\ 
(avec par convention $H_{*+m+1}=H_0$).

\vspace {0.5cm}

\noindent{\bf 7)} {\bf D\' ecoupage de chaque $\zeta_k$, Figures 19, 20, 21} :

\vspace {0.5cm}

\noindent $\bullet$ un 0-simplexe $d_k$;

\vspace {0.5cm}

\noindent $\bullet$ des 1-simplexes $C_k^+,C_k^-$ (d'origine $d_k$ et d'extr\' emit\' e $c_k$) et $S_{k,0},\ldots,S_{k,z_k}$ (d'origine $d_k$ et d'extr\' emit\' e $\sigma$);

\vspace {0.5cm}

\noindent $\bullet$ des 2-simplexes $P_{k,1}^-,\ldots,P_{k,z_k}^-$ de faces $(p_{k,1},S_{k,1},C_k^-),\ldots,(p_{k,z_k},S_{k,z_k},C_k^-)$;

\vspace {0.5cm}

\noindent $\bullet$ des 2-simplexes $P_{k,1}^+,\ldots,P_{k,z_k}^+$ de faces
\begin{itemize} 
\item[-] si $b_k>0$,
$(p_{k,1},S_{k,w_k},C_k^+),\ldots,(p_{k,z_k-w_k+1},S_{k,z_k},C_k^+),$\\
$(p_{k,z_k-w_k+2},S_{k,0},C_k^+),(p_{k,z_k-w_k+3},S_{k,2},C_k^+),\ldots,(p_{k,z_k},S_{k,w_k-1},C_k^+)$
\item[-] si $b_k<0$,
$(p_{k,1},S_{k,0},C_k^+),(p_{k,2},S_{k,3},C_k^+)\ldots,(p_{k,z_k},S_{k,1},C_k^+)$
\item[-] si $b_k=0$, $(p_{k,1},S_{k,0},C_k^+)$;
\end{itemize} 

\vspace {0.5cm}

\noindent $\bullet$ des 2-simplexes $X_{k,1},\ldots,X_{k,z_k}$, de faces
\begin{itemize} 
\item[-] si $b_k>0$, $(x_{k,1},S_{k,2},S_{k,1}),\ldots,(x_{k,z_k},S_{k,1},S_{k,z_k})$,
\item[-] si $b_k<0$, $(q_k,S_{k,2},S_{k,1}),(h,S_{k,2},S_{k,3})\ldots,(h,S_{k,z_k},S_{k,1})$,
\item[-] si $b_k=0$, $(q_k,S_{k,1},S_{k,1})$;
\end{itemize}

\vspace {0.5cm}

\noindent $\bullet$ des 2-simplexes $Q_k,H'_k,G_k$ de faces
\begin{itemize} 
\item[-] si $b_k>0$, $(q_k,S_{k,0},S_{k,z_k}),(h,S_{k,2},S_{k,0}),(g_k,S_{k,2},S_{k,z_k})$,
\item[-] si $b_k<0$, $(q_k,S_{k,3},S_{k,0}),(h,S_{k,1},S_{k,0}),(g_k,S_{k,2},S_{k,0})$,
\item[-] si $b_k=0$, $(q_k,S_{k,0},S_{k,0}),(h,S_{k,1},S_{k,0}),(g_k,S_{k,1},S_{k,0})$;
\end{itemize}

\vspace {0.5cm}

\noindent $\bullet$ des 3-simplexes $M_{k,1}^-,\ldots,M_{k,z_k}^-$, de faces
\begin{itemize} 
\item[-] si $b_k>0$, $(\mu_{k,1},X_{k,1},P_{k,2}^-,P_{k,1}^-),\ldots,(\mu_{k,z_k},X_{k,z_k},P_{k,1}^-,P_{k,z_k}^-)$,
\item[-] si $b_k<0$, $(\mu_{k,1},X_{k,1},P_{k,2}^-,P_{k,1}^-),(\mu_{k,2},X_{k,2},P_{k,2}^-,P_{k,3}^-)\ldots$,\\
$(\mu_{k,z_k},X_{k,z_k},P_{k,z_k}^-,P_{k,1}^-)$,
\item[-] si $b_k=0$, $(\mu_{k,1},X_{k,1},P_{k,1}^-,P_{k,1}^-)$;
\end{itemize}

\vfill\eject

\noindent $\bullet$ des 3-simplexes $M_{k,1}^+,\ldots,M_{k,z_k}^+$ de faces
\begin{itemize} 
\item[-] si $b_k>0$, $(\mu_{k,1},X_{k,w_k},P_{k,2}^+,P_{k,1}^+),\ldots,(\mu_{k,z_k-w_k+1},Q_k,P_{k,z_k-w_k+2}^+,P_{k,z_k-w_k+1}^+)$,\\
$(\mu_{k,z_k-w_k+2},H'_k,P_{k,z_k-w_k+3}^+,P_{k,z_k-w_k+2}^+),\ldots,(\mu_{k,z_k},X_{k,w_k-1},P_{k,1}^+,P_{k,z_k}^+)$,
\item si $b_k<0$, $(\mu_{k,1},Q_k,P_{k,2}^+,P_{k,1}^+),(\mu_{k,2},X_{k,3},P_{k,2}^+,P_{k,3}^+),\ldots,$\\
$(\mu_{k,z_k-1},X_{k,z_k},P_{k,z_k-1}^+,P_{k,z_k}^+),(\mu_{k,z_k},H'_k,P_{k,z_k}^+,P_{k,1}^+)$,
\item[-] si $b_k=0$, $(\mu_{k,1},Q_k,P_{k,1}^+,P_{k,1}^+)$;
\end{itemize}

\vspace {0.5cm}

\noindent $\bullet$ des 3-simplexes $R'_{k,1},R'_{k,2}$ de faces
\begin{itemize} 
\item[-] si $b_k>0$, $(\rho_{k,1},H'_k,G_k,Q_k),(\rho_{k,2},X_{k,1},G_k,X_{k,z_k})$,
\item[-] si $b_k<0$, $(\rho_{k,1},X_{k,2},G_k,Q_k),(\rho_{k,2},X_{k,1},G_k,H'_k)$,
\item[-] si $b_k=0$, $(\rho_{k,1},H'_k,G_k,Q_k),(\rho_{k,2},X_{k,1},G_k,H'_k)$.
\end{itemize}

\section{ Morphisme des cellules vers les simplexes}\label{sub:mor}

\begin{defi}\label{defi:mor} On note $T$ l'application d\'efinie sur les g\'en\'erateurs du complexe des cha\^\i nes cellulaires vers ceux des cha\^\i nes simpliciales  par~:\\
$\bullet$ $T$ envoie les $0$- et $1$-cellules $\sigma$ et $t_j,q_k,h$ sur les $0$- et $1$-simplexes du m\^eme nom,\\
$\bullet$ $T(\rho_k)=\rho_{k,1}-\rho_{k,2}$,\\
$\bullet$ $T(\nu_j)=\nu_{j,1}-\varepsilon_j\nu_{j,2}$,\\
$\bullet$ $T(\mu_k), 1\le \ell \le z_k$ (voir Notation \ref{nota:wab}),
\begin{itemize}
\item[-] si $b_k\geq 0$~: $T(\mu_k)=\sum\mu_{k,\ell}$,
\item[-] si $b_k\leq 0$~: $T(\mu_k)=\mu_{k,1}-\sum_{\ell>1}\mu_{k,\ell}$,
\end{itemize}
$\bullet$ $T(\delta)$
\begin{itemize}
\item[-] pour les Types $o_i$~: $T(\delta)=\sum_{i=0}^{g-1}(\delta_{4i}+\delta_{4i+1}-\delta_{4i+2}-\delta_{4i+3})+\sum_{\ell=4g}^{4g+m}\delta_\ell$,
\item[-] pour les Types $n_i$~: $T(\delta)=\sum\delta_\ell,$
\end{itemize}
$\bullet$ $T(\epsilon)$, en notant $D'_\ell:=D^+_\ell-D^-_\ell$,
\begin{itemize}
\item[-] pour le Type $o_1$~:
$T(\epsilon)=\sum_{k=0}^m (R_{k,1}-R_{k,2})+\sum_{\ell=*}^{*+m}D'_\ell+\sum_{i=0}^{g-1}(D'_{4i}+D'_{4i+1}-D'_{4i+2}-D'_{4i+3})+\sum(N_{j,1}-N_{j,2}-N'_{j,1}+N'_{j,2})$,
\item[-] pour le Type $o_2$~:
$T(\epsilon)=\sum_{k=0}^m (R_{k,1}-R_{k,2})+\sum_{\ell=*}^{*+m}D'_\ell+\sum_{i=0}^{g-1}(D'_{4i}+D'_{4i+1}-D'_{4i+2}-D'_{4i+3})+\sum(-1)^j(N_{j,1}+N_{j,2}+N'_{j,1}+N'_{j,2})$,
\item[-] pour les Types $n_i$~:
$T(\epsilon)=\sum_{k=0}^m (R_{k,1}-R_{k,2})+\sum_{\ell=*}^{*+m}D'_\ell+\sum_{i=0}^{2g-1}D'_j+\sum_{\varepsilon_j=1}(N_{j,1}-N_{j,2}+N'_{j,1}-N'_{j,2})+\sum_{\varepsilon_j=-1}(-N_{j,1}-N_{j,2}+N'_{j,1}+N'_{j,2}),$
\end{itemize}
$\bullet$ $T(\zeta_k)$, en notant $M'_{k,\ell}:=M_{k,\ell}^+- M_{k,\ell}^-$,
\begin{itemize}
\item[-] si $b_k\geq 0$~: $T(\zeta_k)=-R'_{k,1}+R'_{k,2}+\sum M'_{k,\ell}$,
\item[-] si $b_k\leq 0$~: $T(\zeta_k)=-R'_{k,1}+R'_{k,2}+M'_{k,1}-\sum_{\ell>1}M'_{k,\ell}$.
\end{itemize}

\end{defi}

\vfill\eject

\begin{prop} L'application $T$ d\'efinie ci-dessus est un quasi-isomorphisme  du complexe des cha\^\i nes cellulaires vers celui des cha\^\i nes simpliciales.\\
On en d\' eduit un (quasi-iso-) morphisme $T^t$, du complexe des cocha\^\i nes simpliciales vers celui des cocha\^\i nes cellulaires~: $(T^t(f))(s):=f(T(s))$.
\end{prop}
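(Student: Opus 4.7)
Le plan est de proc\'eder en deux \'etapes. Premi\`erement, je v\'erifie que $T$ est un morphisme de complexes, c'est-\`a-dire que $\partial T(x)=T(\partial x)$ pour chaque g\'en\'erateur cellulaire $x$, en utilisant les formules de bord cellulaire du premier lemme et les faces des simplexes \'enum\'er\'ees \`a la Section \ref{section:decomsimp}. Deuxi\`emement, j'\'etablis l'isomorphisme en homologie en invoquant le fait que le complexe simplicial est une subdivision du complexe cellulaire et que $T$ co\"\i ncide, au signe pr\`es, avec le morphisme naturel de subdivision.

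Pour la premi\`ere \'etape, les g\'en\'erateurs se traitent par dimension croissante. Les 0- et 1-cellules sont imm\'ediates. Pour $\rho_k$, la formule $T(\rho_k)=\rho_{k,1}-\rho_{k,2}$ fait dispara\^\i tre le rayon interne $g_k$ dans $\partial T(\rho_k)$ et ne laisse que le commutateur $[h,q_k]$ qui est $\partial\rho_k$. Le traitement de $\nu_j$ est analogue, le signe $\varepsilon_j$ dictant l'orientation relative des deux triangles. Pour $\mu_k$ et $\delta$, les rayons internes ($p_{k,\ell}$ et $e_\ell$ respectivement) s'annulent t\'elescopiquement lors de la sommation et il ne reste pr\'ecis\'ement que les mots bordant $\mu_k$ et $\delta$, le signe altern\'e pour $\delta$ dans les Types $o_i$ refl\'etant la structure en commutateurs $[t_{2i-1},t_{2i}]$. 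Les cas les plus d\'elicats sont ceux des 3-cellules $\epsilon$ et $\zeta_k$~: les signes choisis dans la d\'efinition de $T$ doivent faire en sorte que les 2-simplexes internes \`a la sph\`ere bordante (ceux faisant intervenir $A^\pm$, $S_\ell^\pm$, $H_\ell$, $F_\ell$, $T_\ell^\pm$ pour $\epsilon$, et $C_k^\pm$, $S_{k,\ell}$, $P_{k,\ell}^\pm$ pour $\zeta_k$) se compensent exactement, et que les 2-simplexes restants pavent les deux copies de $\delta$, des $\nu_j$ et des $\rho_k$ (resp. des $\mu_k$ et de $\rho_k$). Pour $T(\zeta_k)$, c'est pr\'ecis\'ement la propri\'et\'e-cl\'e du mot $w_{a_k,b_k}$ \'enonc\'ee au Th\'eor\`eme \ref{theo:ab} qui garantit la coh\'erence du pavage~: la permutation circulaire et le remplacement d'un $hq_k$ par $q_kh$ fournissent la seconde copie de $\mu_k$ apr\`es recollement le long de $\rho_k$.

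La seconde \'etape est br\`eve~: les deux complexes calculent l'homologie singuli\`ere de $M$, et $T$, \'etant induit par une subdivision de la CW-structure, est un quasi-isomorphisme par le th\'eor\`eme d'approximation simpliciale appliqu\'e \`a l'identit\'e de $M$. Alternativement, comme les deux complexes sont concentr\'es en degr\'es $0$ \`a $3$ et libres de rangs connus par les lemmes pr\'ec\'edents, on peut v\'erifier directement que $T$ est surjectif en homologie, ce qui, combin\'e \`a l'\'egalit\'e des rangs, suffit. Le fait que $T^t$ est alors lui aussi un quasi-isomorphisme d\'ecoule de ce que les complexes en jeu sont libres de type fini en chaque degr\'e, par le th\'eor\`eme des coefficients universels.

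Le principal obstacle attendu est la v\'erification combinatoire du caract\`ere de morphisme de cha\^\i nes pour $T(\epsilon)$, qui demande de traiter s\'epar\'ement les six Types et, \`a l'int\'erieur des Types $o_i$, la dichotomie $j$ pair/impair dans la d\'efinition des $N_{j,i}$ et $N'_{j,i}$. Un tableau syst\'ematique des contributions des $D^\pm_\ell$, $N_{j,i}$, $N'_{j,i}$ et $R_{k,i}$ au bord, regroup\'ees par 2-simplexe cible, est probablement l'outil le plus \'economique pour traverser cette v\'erification.
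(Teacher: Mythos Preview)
Le papier ne donne en fait aucune preuve de cette proposition~: elle est \'enonc\'ee imm\'ediatement apr\`es la D\'efinition~\ref{defi:mor} et la section suivante commence aussit\^ot. Votre plan est donc un suppl\'ement correct et bienvenu. La v\'erification que $T$ est un morphisme de cha\^\i nes constitue effectivement la partie combinatoire substantielle, et vous avez bien identifi\'e les deux points d\'elicats~: pour $T(\zeta_k)$ c'est le Th\'eor\`eme~\ref{theo:ab} qui fait fonctionner le pavage, et pour $T(\epsilon)$ il faut traiter les six Types s\'epar\'ement. L'argument de quasi-isomorphisme par subdivision est standard et correspond \`a ce sur quoi le papier s'appuie implicitement.

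Une remarque mineure~: votre argument alternatif (\og surjectif en homologie combin\'e \`a l'\'egalit\'e des rangs \fg) fonctionne proprement sur un corps mais demande des pr\'ecautions sur $\Z$ \`a cause de la torsion \'eventuelle~; l'argument de subdivision est plus net et suffit ici.
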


\section{Relev\'e des cocycles cellulaires en cocycles simpliciaux}\label{section:relev}

\bigskip

\subsection{Bord du complexe des cocha\^\i nes simpliciales}

 Pour chaque g\'en\'erateur $\xi=[\hat \xi]$, le but  des deux sections suivantes est de {\sl choisir}  un relev\'e de $T^t$, noté $R(\hat \xi)$, i.e. tel que $T^tR(\hat \xi)=\hat \xi$ avec la propri\'et\'e suppl\'ementaire d'\^etre un cocycle (et pas seulement une  cocha\^\i ne).

Pour all\' eger la pr\' esentation, on n'\' ecrira (en commen\c cant \`a regrouper) que les bords qui seront utiles dans la partie suivante pour expliciter des repr\' esentants des g\' en\' erateurs. 

Le bord est nul sur toutes les 3-cocha\^\i nes, et le bord de la 0-cocha\^\i ne $\hat\sigma+\hat a+\hat b+\sum\hat c_k+\sum\hat d_k$ est nul. 

Posons $U_\ell=\hat F_\ell+\hat\delta_\ell+\hat T_\ell^\pm.$ Ici et dans la suite cette notation est \`a comprendre de la fa\c con suivante : $U_\ell= \hat F_\ell+\hat\delta_\ell+\hat T_\ell^++\hat T_\ell^-.$

\begin{prop} Les  bords des 1-cocha\^\i nes simpliciales sont :\\
\noindent{\bf Types $o_1$, $o_2$}\\
--- $\del\hat t_j=\varepsilon_j\hat\nu_{j,1}+\hat\nu_{j,2}+\hat T_{2j-2}^\pm+\hat T_{2j}^\pm+\hat\delta_{2j-2}+\hat\delta_{2j}$ si $j$ impair, et $\varepsilon_j\hat\nu_{j,1}+\hat\nu_{j,2}+\hat T_{2j-3}^\pm+\hat T_{2j-1}^\pm+\hat\delta_{2j-3}+\hat\delta_{2j-1}$ si $j$ pair;\\
--- $\del\hat f_j=-\varepsilon_j\hat\nu_{j,1}-\hat\nu_{j,2}+$
$\hat F_{2j-2}+\hat F_{2j}$ si $j$ impair, $\hat F_{2j-3}+\hat F_{2j-1}$ si $j$ pair;\\
--- $\del(\hat t_j+\hat f_j)=U_\ell+U_{\ell-2}$ avec si $j$ impair, $\ell=2j$ et si $j$ pair, $\ell=2j-1 ;$

\vspace {0.5cm}

\noindent  {\bf  Type $n_1$ \`a $n_4$}\\
--- $\del\hat t_j=\varepsilon_j\hat\nu_{j,1}+\hat\nu_{j,2}+\hat T_{2j-2}^\pm+\hat T_{2j-1}^\pm+\hat\delta_{2j-2}+\hat\delta_{2j-1};$\\
--- $\del\hat f_j=-\varepsilon_j\hat\nu_{j,1}-\hat\nu_{j,2}+\hat F_{2j-2}+\hat F_{2j-1};$

\vspace {0.5cm}

\noindent  {\bf  Type $o_1$, $o_2$ et $n_1$ \`a $n_4$}\\
--- $\del(\hat h+\sum\hat f_j+\sum\hat q_k)=\sum(\hat H_\ell+\hat F_\ell)+\sum_k(\hat H'_k+\hat G_k+\sum_{x_{k,i}=h}(\hat\mu_{k,i}+\hat X_{k,i}));$\\
--- $\del\hat q_k=\hat\rho_{k,1}+\hat\rho_{k,2}+\hat\delta_{*+k}+\hat T^\pm_{*+k}+\sum_{x_{k,i}=q_k}(\hat\mu_{k,i}+\hat X_{k,i})+\hat Q_k;$\\
--- $\del\hat g_k=-\hat\rho_{k,1}-\hat\rho_{k,2}+\hat F_{*+k}+\hat G_k;$\\
--- $\del\hat p_{k,\ell}=\hat P_{k,\ell}^\pm+\hat\mu_{k,\ell}-\hat\mu_{k,\ell-1}$ si $\ell>1$ et si $b_k>0;$\\
--- $\del\hat p_{k,\ell}=\hat P_{k,\ell}^\pm-\hat\mu_{k,\ell}+\hat\mu_{k,\ell-1}$ si $\ell>2$ et si $b_k<0;$\\
--- $\del\hat p_{k,2}=\hat P_{k,2}^\pm-\hat\mu_{k,2}-\hat\mu_{k,1};$\\
--- $\del(\hat S_0^\pm+\hat e_0)=(\hat T_0^\pm+\hat H_0+\hat F_0)-(\hat T_{*+m}^\pm+\hat H_{*+m}+\hat F_{*+m});$

Le symbole $*$ est d\'efini dans  la Notation \ref{nota:cd} 
\vspace {0.5cm}

\noindent  {\bf - Pour $o_i$ lorsque $1<\ell\leq 4g$ et $\ell=$ 2 ou 3 mod 4,}\\
--- $\del(\hat S_\ell^\pm+\hat S_{\ell-1}^\pm+\hat e_\ell+\hat e_{\ell-1})=-U_\ell-U_{\ell-2};$
 
 \vspace {0.5cm}
 
\noindent {\bf - Pour $n_i$ lorsque $1<\ell\leq 2g$, et pour $o_i$ et $n_i$  lorsque $*<\ell\leq *+m$,} \\
--- $\del(\hat S_\ell^\pm+\hat e_\ell)=U_\ell-U_{\ell-1};$

\vspace {0.5cm}

\noindent {\bf - Pour $o_1$ et $n_1$ (et pour tous les Types si $p=2$) }\\
--- $\del(\hat A^++\sum\hat S_\ell^+)=-\sum(\hat H_\ell+\hat F_\ell);$\\
--- $\del\hat C_k^+=\sum\hat P_{k,\ell}^+.$\\
--- Bord des $\hat S_{k,-}$ :\\
- si $b_k>0$
\begin{itemize}
\item[-- ] $\del\hat S_{k,0}=-\hat Q_k+\hat H'_k-\hat P_{k,z_k-w_k+2}^+$,
\item[-- ] $\del\hat S_{k,1}=-\hat X_{k,z_k}+\hat X_{k,1}-\hat P_{k,1}^-$,
\item[-- ] $\del\hat S_{k,2}=-\hat H_k'-\hat G_k-\hat X_{k,1}+\hat X_{k,2}-\hat P_{k,2}^--\hat P_{k,z_k-w_k+3}^+$,
\item[-- ] $\del\hat S_{k,\ell}=-\hat X_{k,\ell-1}+\hat X_{k,\ell}-\hat P_{k,\ell}^--\hat P_{k,z_k-w_k+\ell+1}^+$ si $2<\ell<w_k$,
\item[-- ] $\del\hat S_{k,\ell}=-\hat X_{k,\ell-1}+\hat X_{k,\ell}-\hat P_{k,\ell}^--\hat P_{k,\ell-w_k+1}^+$ si $w_k\leq\ell<z_k$,
\item[-- ] $\del\hat S_{k,z_k}=\hat Q_k+\hat G_k-\hat X_{k,z_k-1}+\hat X_{k,z_k}-\hat P_{k,z_k}^--\hat P_{k,z_k-w_k+1}^+,$
\end{itemize}
- si $b_k<0$
\begin{itemize}
\item[-- ] $\del\hat S_{k,0}=\hat Q_k+\hat H'_k+\hat G_k-\hat P_{k,1}^+$,
\item[-- ] $\del\hat S_{k,1}=-\hat H'_k+\hat X_{k,1}+\hat X_{k,z_k}-\hat P_{k,1}^--\hat P_{k,z}^+$,
\item[-- ] $\del\hat S_{k,2}=-\hat G_k-\hat X_{k,1}-\hat X_{k,2}-\hat P_{k,2}^-$,
\item[-- ] $\del\hat S_{k,3}=-\hat Q_k+\hat X_{k,2}-\hat X_{k,3}-\hat P_{k,3}^--\hat P_{k,2}^+$,
\item[-- ] $\del\hat S_{k,\ell}=\hat X_{k,\ell-1}-\hat X_{k,\ell}-\hat P_{k,\ell}^--\hat P_{k,\ell-1}^+$ si $\ell>3;$
\end{itemize}
- si $b_k=0$
\begin{itemize}
\item[-- ] $\del\hat S_{k,0}=\hat H'_k+\hat G_k-\hat P_{k,1}^+$,
\item[-- ] $\del\hat S_{k,1}=-\hat H'_k-\hat G_k-\hat P_{k,1}^-.$
\end{itemize}
\end{prop}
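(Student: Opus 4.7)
La démarche est directe : pour toute 1-cochaîne simpliciale $\hat x$ et tout 2-simplexe $\tau$, on a par définition $(\del\hat x)(\tau) = \hat x(\del\tau)$. Il suffit donc, pour chaque 1-simplexe $x$ apparaissant dans l'énoncé, d'énumérer tous les 2-simplexes $\tau$ de la décomposition de la Sous-section \ref{coup} dans le bord orienté desquels $x$ figure, et d'en lire le coefficient signé. Toutes les formules de la proposition s'obtiennent ainsi par un calcul direct, mais très long, à partir des listes de faces données dans la Section \ref{section:decomsimp}.

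Concrètement, le plan est de parcourir systématiquement les familles de 2-simplexes introduites en \ref{coup} -- à savoir $\rho_{k,i}$, $\nu_{j,i}$, $\delta_\ell$, $\mu_{k,\ell}$, $E_\ell^\pm$, $T_\ell^\pm$, $H_\ell$, $F_\ell$, $P_{k,\ell}^\pm$, $X_{k,\ell}$, ainsi que $Q_k$, $H'_k$, $G_k$ -- et, pour chacune, de repérer dans quelles faces les 1-simplexes en jeu apparaissent. Par exemple, $t_j$ figure comme première face de $\nu_{j,1}$ (avec signe $\varepsilon_j$) et de $\nu_{j,2}$, puis des triangles $\delta_\ell, T_\ell^\pm$ correspondant aux pas des commutateurs $[t_{2i-1},t_{2i}]$ en Type $o_i$ ou des carrés $t_j^2$ en Type $n_i$, ce qui livre immédiatement $\del\hat t_j$. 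Le même principe appliqué successivement à $f_j, q_k, h, g_k, p_{k,\ell}, e_\ell, S_\ell^\pm, A^\pm, C_k^\pm, S_{k,\ell}$ fournit les autres formules élémentaires.

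Une fois ces coboundaries individuelles obtenues, il reste à exploiter les regroupements qui en donnent la forme compacte annoncée. La notation $U_\ell = \hat F_\ell + \hat\delta_\ell + \hat T_\ell^\pm$ reflète précisément le fait qu'en additionnant $\del\hat t_j$ et $\del\hat f_j$, les contributions des $\hat\nu_{j,i}$ s'éliminent et celles des triangles centraux se rassemblent par indice de sommet ; les formules du type $\del(\hat h + \sum\hat f_j + \sum\hat q_k)$ ou $\del(\hat S_\ell^\pm + \hat e_\ell)$ procèdent de la même logique d'annulation, via la géométrie de la sphère pavée de centre $b$ (respectivement $d_k$) dont les rayons sont $A^\pm$ et les $S_\ell^\pm$.

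La difficulté principale n'est pas conceptuelle -- tout le calcul n'est qu'une application mécanique de la dualité -- mais combinatoire : il faut tenir simultanément à jour les distinctions $b_k > 0$, $b_k < 0$, $b_k = 0$ pour la décomposition de $\mu_k$ et $\zeta_k$, la parité de $\ell$ pour les Types $o_i$, le signe $\varepsilon_j$ qui détermine l'orientation des $\nu_{j,i}$ et de certains $H_\ell$, et le passage cyclique régi par l'entier $w_k$ de la Notation \ref{nota:w}. Un contrôle croisé précieux est fourni par le fait que $T$ est un quasi-isomorphisme : les formules ci-dessus doivent être compatibles, via $T^t$, avec les bords cellulaires du lemme ``Bord des cochaînes cellulaires'' ; toute erreur de signe dans les longues listes se détecterait immédiatement par violation de cette compatibilité.
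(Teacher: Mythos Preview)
Your proposal is correct and matches the paper's treatment: the paper gives no separate proof of this proposition, since the formulas are obtained by direct inspection of the face lists of the 2-simplexes in Section~\ref{section:decomsimp}, exactly as you describe. Your outline of the dualisation procedure, the regrouping via $U_\ell$, and the case distinctions ($b_k\gtrless 0$, parité de $j$, $\varepsilon_j$, décalage cyclique $w_k$) is precisely what underlies the stated results.
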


Le bord de la 1-cellule $Z_k$ est donn\'e dans le lemme suivant.


\begin{lem}\label{lem:YZV} D\' efinissons $Y_k,Z_k,V_k$ par~:
\begin{itemize}
\item si $b_k>0$,
\begin{itemize}
\item $Y_k=\hat H'_k+\hat G_k+\hat X_{k,1}+\hat\mu_{k,1}-\sum_{2\leq\ell\leq z_k-w_k+2}\hat P_{k,\ell}^+$,
\item $Z_k=\hat q_k+\hat g_k-v_k\hat C_k^++\hat S_{k,0}-\sum_{\ell\geq 2}(\hat S_{k,\ell}+\hat p_{k,\ell})\sharp\{i\geq\ell\tq x_{k,i}=q_k\}$,
\item $V_k=u_k\hat C_k^++\hat S_{k,0}+\sum_{\ell\geq 2}(\hat S_{k,\ell}+\hat p_{k,\ell})\sharp\{i\geq\ell\tq x_{k,i}=h\}$,
\end{itemize}
\item si $b_k\leq 0$ (donc $a_k=u_k=1$ et $v_k=0$)
\begin{itemize}
\item $Y_k=\hat Q_k+\hat G_k+\hat X_{k,1}+\hat\mu_{k,1}$,
\item $Z_k=\hat q_k+\hat g_k$,
\item $V_k=\hat C_k^++\hat S_{k,0}-\sum_{\ell\geq 2}(\hat S_{k,\ell}+\hat p_{k,\ell})(z_k-\ell+1).$
\end{itemize}
\end{itemize}
 Alors $\del Z_k=U_{*+k}+a_kY_k$ et
$\sum_{x_{k,i}=h}(\hat X_{k,i}+\hat\mu_{k,i})=b_kY_k-\hat H'_k-\hat G_k+\del V_k$.
\end{lem}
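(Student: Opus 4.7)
La stratégie est de calculer directement $\partial Z_k$ et $\partial V_k$ à partir des formules de bord des 1-cochaînes simpliciales de la proposition précédente, puis d'effectuer une sommation par parties (sommation d'Abel) sur les sommes pondérées par les comptes $N_q(\ell):=\sharp\{i\geq\ell\tq x_{k,i}=q_k\}$ et $N_h(\ell):=\sharp\{i\geq\ell\tq x_{k,i}=h\}$. Ces comptes satisfont $N_q(\ell)-N_q(\ell+1)=1$ si $x_{k,\ell}=q_k$ et $0$ sinon, et l'analogue pour $N_h$. On traite d'abord le cas principal $b_k>0$; les cas $b_k<0$ et $b_k=0$ (où $a_k=u_k=1$, $v_k=0$) sont plus simples et se traitent en parallèle avec les formules de bord correspondantes.

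Pour la première identité, on observe d'abord que $\partial(\hat q_k+\hat g_k)=U_{*+k}+\hat Q_k+\hat G_k+\sum_{x_{k,i}=q_k}(\hat\mu_{k,i}+\hat X_{k,i})$, les $\hat\rho_{k,j}$ se compensant. Il reste donc à établir que $-v_k\partial\hat C_k^++\partial\hat S_{k,0}-\sum_{\ell\geq 2}(\partial\hat S_{k,\ell}+\partial\hat p_{k,\ell})\,N_q(\ell)$ coïncide avec $a_kY_k-\hat Q_k-\hat G_k-\sum_{x_{k,i}=q_k}(\hat\mu_{k,i}+\hat X_{k,i})$. La sommation par parties appliquée aux télescopages $\hat\mu_{k,\ell}-\hat\mu_{k,\ell-1}$ dans $\partial\hat p_{k,\ell}$ et $\hat X_{k,\ell}-\hat X_{k,\ell-1}$ dans $\partial\hat S_{k,\ell}$ produit exactement les combinaisons $a_k\hat\mu_{k,1}-\sum_{x_{k,i}=q_k}\hat\mu_{k,i}$ et $a_k\hat X_{k,1}-\sum_{x_{k,i}=q_k}\hat X_{k,i}$, grâce à $N_q(2)=a_k-1$ (car $x_{k,1}=q_k$) et $N_q(z_k+1)=0$. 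Les contributions en $\hat H'_k$ et $\hat G_k$ issues de $\partial\hat S_{k,0}$ et du terme $-N_q(2)\partial\hat S_{k,2}$ donnent $a_k(\hat H'_k+\hat G_k)$; le $\hat Q_k$ de $\partial\hat S_{k,0}$ annule celui de $\partial(\hat q_k+\hat g_k)$; et les $\hat P^-_{k,\ell}$ venant de $\partial\hat p_{k,\ell}$ et de $\partial\hat S_{k,\ell}$ s'annulent deux à deux.

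L'obstacle principal est le traitement des 2-cochaînes $\hat P^+_{k,\ell}$, qui apparaissent dans $-v_k\partial\hat C_k^+=-v_k\sum_\ell\hat P^+_{k,\ell}$, dans $\partial\hat S_{k,0}$ (avec indice $z_k-w_k+2$), et dans les $\partial\hat S_{k,\ell}$ selon deux régimes distincts --- indice $z_k-w_k+\ell+1$ pour $2\leq\ell<w_k$ et indice $\ell-w_k+1$ pour $w_k\leq\ell\leq z_k$ ---, reflétant le décalage cyclique par $w_k$. La propriété cruciale héritée du Théorème \ref{theo:ab} et explicitée dans la Notation \ref{nota:w} --- à savoir que le segment $x_{k,w_k},\ldots,x_{k,z_k}$ contient exactement $v_k$ occurrences de $q_k$ --- implique non seulement $N_q(w_k)=v_k$, mais plus généralement les identités de translation $N_q(m)-N_q(m+w_k-1)=a_k-v_k$ pour $m\in[2,z_k-w_k+1]$ et $N_q(m-z_k+w_k-1)-N_q(m)=v_k$ pour $m\in[z_k-w_k+3,z_k]$, qui expriment la pseudo-périodicité du mot $w_{a_k,b_k}$. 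Une vérification coefficient par coefficient selon la position $m$ de $\hat P^+_{k,m}$ montre alors que la somme totale vaut bien $-a_k\sum_{2\leq\ell\leq z_k-w_k+2}\hat P^+_{k,\ell}$, conformément à la définition de $Y_k$.

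La seconde identité se démontre par un calcul parallèle de $\partial V_k$, la différence essentielle étant le remplacement de $-v_k N_q$ par $+u_k N_h$ et les signes opposés devant $\hat S_{k,\ell}+\hat p_{k,\ell}$. Les télescopages produisent cette fois $-b_k\hat\mu_{k,1}+\sum_{x_{k,i}=h}\hat\mu_{k,i}$ et l'analogue pour $\hat X$, puisque $N_h(2)=b_k$ (car $x_{k,1}=q_k$ et les $b_k$ lettres $h$ sont toutes en positions $\geq 2$). Le choix des coefficients $u_k$ et des signes dans $V_k$ est précisément celui qui fait s'annuler toutes les contributions en $\hat P^\pm_{k,\ell}$ (en utilisant cette fois que le segment $x_{k,w_k},\ldots,x_{k,z_k}$ contient $u_k$ occurrences de $h$), laissant $\partial V_k=\sum_{x_{k,i}=h}(\hat\mu_{k,i}+\hat X_{k,i})+\hat H'_k+\hat G_k-b_k Y_k$; on isole cette somme pour conclure.
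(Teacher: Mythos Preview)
Your approach is correct and essentially the same as the paper's. The only organizational difference is that the paper first establishes, for each individual $i$, a formula expressing $\hat X_{k,i}+\hat\mu_{k,i}-\hat X_{k,1}-\hat\mu_{k,1}-\hat H'_k-\hat G_k$ as $\partial\sum_{2\le\ell\le i}(\hat S_{k,\ell}+\hat p_{k,\ell})$ plus explicit $\hat P^+$ corrections, and then sums these over the relevant $i$; you instead expand $\partial Z_k$ (resp.\ $\partial V_k$) directly and handle the $\hat\mu$ and $\hat X$ telescopes by Abel summation. The two computations are dual to one another, and your ``translation identities'' $N_q(m)-N_q(m+w_k-1)=a_k-v_k$ and $N_q(m-z_k+w_k-1)-N_q(m)=v_k$ are exactly the coefficient identities the paper derives case by case from the pseudo-periodicity of $w_{a_k,b_k}$ in Notation~\ref{nota:w}. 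One small bookkeeping slip: the $\hat H'_k,\hat G_k$ contributions from $\partial\hat S_{k,0}$ and $-N_q(2)\partial\hat S_{k,2}$ give $a_k\hat H'_k+(a_k-1)\hat G_k$, not $a_k(\hat H'_k+\hat G_k)$; the missing $\hat G_k$ is precisely the $-\hat G_k$ already on the right-hand side, so the final identity is unaffected.
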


\begin{proof}

Le cas $b_k\leq 0$ est facile. Dans le cas $b_k>0$, d\' etaillons la preuve.\\
$\del(\hat q_k+\hat g_k)=U_{*+k}+\hat G_k+\hat Q_k+\sum_{x_{k,i}=q_k}(\hat\mu_{k,i}+\hat X_{k,i})$, or\cut
$\hat X_{k,i}+\hat\mu_{k,i}-\hat X_{k,1}-\hat\mu_{k,1}-\hat H'_k-\hat G_k-\del\sum_{2\leq\ell\leq i}(\hat S_{k,\ell}+\hat p_{k,\ell})+\sum_{2\leq\ell\leq i}\hat P_{k,\ell}^+$ est \' egal~:
\\ - si $2\leq i<w_k$, \`a $\sum_{2\leq\ell\leq i}\hat P_{k,z_k-w_k+\ell+1}^+$,
\\ - si $w_k\leq i<z_k$, \`a $\sum_{2\leq\ell<w_k}\hat P_{k,z_k-w_k+\ell+1}^++\sum_{w_k\leq\ell\leq i}\hat P_{k,\ell-w_k+1}^+$, d'o\`u\\
\begin{tiny}
$$\sum_{x_{k,i}=q_k}(\hat X_{k,i}+\hat\mu_{k,i})-a_k(\hat X_{k,1}+\hat\mu_{k,1})-(a_k-1)(\hat H'_k+\hat G_k)
-\del\sum_{\ell\geq 2}(\hat S_{k,\ell}+\hat p_{k,\ell})\sharp\{i\geq\ell\tq x_{k,i}=q_k\}$$
$\begin{array}{rl}
=&-\sum_{\ell\geq 2}\hat P_{k,\ell}^+\sharp\{i\geq\ell\tq x_{k,i}=q_k\}
+\sum_{\ell<z_k-w_k+2}\hat P_{k,\ell}^+\sharp\{i\geq\ell+w_k-1\tq x_{k,i}=q_k\}\\
&+\sum_{\ell>z_k-w_k+2}\hat P_{k,\ell}^+\sharp\{i\geq\ell+w_k-1-z_k\tq x_{k,i}=q_k\}.
\end{array}$
\end{tiny}

Dans cette expression, le coefficient de $\hat P_{k,\ell}^+$ vaut (compte tenu des propri\' et\' es de $w_{\alpha,\beta}$ d\' etaill\' ees dans \ref{sub:wab}\\
 - si $\ell=1$~: $\sharp\{i\geq w_k\tq x_{k,i}=q_k\}=v_k$,\\
 - si $\ell=z_k-w_k+2$~: \\
 $-\sharp\{i\geq z_k-w_k+2\tq x_{k,i}=q_k\}=1-\sharp\{i<w_k\tq x_{k,i}=q_k\}=1-(a_k-v_k)$,
\\ 
- si $2\leq\ell\leq z_k-w_k+2$~: \\
$\sharp\{i\geq \ell+w_k-1\tq x_{k,i}=q_k\}-\sharp\{i\geq\ell\tq x_{k,i}=q_k\}=$\\
$\sharp\{i\tq \ell\leq i<z_k-w_k+1\ et\ x_{k,i}=q_k\}-\sharp\{i\geq\ell\tq x_{k,i}=q_k\}=\\
-\sharp\{i\geq z_k-w_k+1\tq x_{k,i}=q_k\}=-\sharp\{i<w_k\tq x_{k,i}=q_k\}=-(a_k-v_k)$,
\\ 
- si $\ell>z_k-w_k+2$~: \\$\sharp\{i\geq\ell+w_k-1-z_k\tq x_{k,i}=q_k\}-\sharp\{i\geq\ell\tq x_{k,i}=q_k\}=$\\
$\sharp\{i\geq w_k\tq x_{k,i}=q_k\}+\sharp\{i\tq \ell+w_k-1-z_k\leq i<w_k\ et\ x_{k,i}=q_k\}\\
-\sharp\{i\geq\ell\tq x_{k,i}=q_k\}=v_k.$

On en d\' eduit donc
$\sum_{x_{k,i}=q_k}(\hat X_{k,i}+\hat\mu_{k,i})=a_kY_k-\hat Q_k-\hat G_k+\del[v_k\hat C_k^+-\hat S_{k,0}+\sum_{\ell\geq 2}(\hat S_{k,\ell}+\hat p_{k,\ell})\sharp\{i\geq\ell\tq x_{k,i}=q_k\}]$, si bien que $\del Z_k$ est \' egal au r\' esultat annonc\' e.

On a d\' ej\`a vu calcul\' e $\sum_{x_{k,i}=q_k}(\hat X_{k,i}+\hat\mu_{k,i})$. On va en d\' eduire $\sum_{x_{k,i}=h}(\hat X_{k,i}+\hat\mu_{k,i})$ par diff\' erence, en calculant
$\sum_i(\hat X_{k,i}+\hat\mu_{k,i})$. Rappelons que pour $1\leq i<z_k$,\\
$\hat X_{k,i}+\hat\mu_{k,i}-\hat X_{k,1}-\hat\mu_{k,1}-\hat H'_k-\hat G_k-\del\sum_{2\leq\ell\leq i}(\hat S_{k,\ell}+\hat p_{k,\ell})+\sum_{2\leq\ell\leq i}\hat P_{k,\ell}^+$ \' etait \' egal~:
\\- si $2\leq i<w_k$, \`a $\sum_{2\leq\ell\leq i}\hat P_{k,z_k-w_k+\ell+1}^+$
\\ - si $w_k\leq i<z_k$, \`a $\sum_{2\leq\ell<w_k}\hat P_{k,z_k-w_k+\ell+1}^++\sum_{w_k\leq\ell\leq i}\hat P_{k,\ell-w_k+1}^+$.\\
De plus, pour $i=z_k$, on a presque la m\^eme formule que pour $w_k\leq i<z_k$, mais en rempla\c cant $-\hat G_k$ par $+\hat Q_k$.

D'o\`u \\
$\sum_i(\hat X_{k,i}+\hat\mu_{k,i})-z_k(\hat X_{k,1}+\hat\mu_{k,1})-(z_k-1)\hat H'_k-(z_k-2)\hat G_k+\hat Q_k-\del\sum_{\ell\geq 2}(\hat S_{k,\ell}+\hat p_{k,\ell})\sharp\{i\geq\ell\}=-\sum_{\ell\geq 2}\hat P_{k,\ell}^+(z_k-\ell+1)+\sum_{\ell<z_k-w_k+2}\hat P_{k,\ell}^+(z_k-\ell+2-w_k)
+\sum_{\ell>z_k-w_k+2}\hat P_{k,\ell}^+(2z_k-\ell+2-w_k)=$\cut
$(u_k+v_k)\del\hat C_k^+-(a_k+b_k)\sum_{2\leq\ell\leq z_k-w_k+2}\hat P_{k,\ell}^+$, si bien que par diff\' erence,
$\sum_{x_{k,i}=h}(\hat X_{k,i}+\hat\mu_{k,i})$ est \' egal au r\' esultat annonc\' e.

\end{proof} 

\vfill\eject

\begin{prop}{ Bord des 2-cocha\^\i nes simpliciales} :\\
\noindent {\bf Pour tous les Types,} on a d'abord \\
--- $\del(\hat\delta_0+\hat T_0^\pm+\hat F_0)=0$,\\
---  $\del(\hat\mu_{k,1}+\hat X_{k,1}+\hat G_k-\sum_{2\leq\ell\leq z_k-w_k+1}\hat P_{k,\ell}^+)=0.$

\vspace {0.5cm}

\noindent {\bf Pour les Types $o_i$ (avec $\varepsilon=1$ pour $o_1$ et $=-1$ pour $o_2$)}\\
- si $j$ impair\\
--- $\del(\hat\nu_{j,1}+\hat H_{2j-1}+\hat F_{2j-1})=\hat N'_{j,1}+\varepsilon\hat N_{j+1,1}$,\\
--- $\del(\hat\nu_{j,1}+\varepsilon\hat H_{2j-1}+\varepsilon\hat F_{2j-1}+\hat H_{2j})=(1-\varepsilon)\hat N_{j,1}$ et\\
--- $\del(\hat F_{2j-2}+\hat H_{2j-2}+\hat F_{2j-3})=\varepsilon(\hat N_{j,1}+\hat N'_{j-1,1})$;\\
- si $j$ pair \\
--- $\del(\hat\nu_{j,1}+\hat H_{2j-1}+\hat F_{2j-2})=\hat N_{j,1}+\varepsilon\hat N'_{j-1,1}$ et\\
---  $\del(\hat\nu_{j,1}+\varepsilon\hat H_{2j-1}+\varepsilon\hat F_{2j-2}+\hat H_{2j-2})=(1-\varepsilon)\hat N'_{j,1}$,\\
--- $\del(\hat\mu_{k,1}+\hat X_{k,1}+\hat G_k-\sum_{2\leq\ell\leq z_k-w_k+1}\hat P_{k,\ell}^+)=0.$

\vspace {0.5cm}

\noindent {\bf Pour les Types $n_i$,}\\
--- $\del(\hat\mu_{k,1}+\hat X_{k,1}+\hat G_k-\sum_{2\leq\ell\leq z_k-w_k+1}\hat P_{k,\ell}^+)=0,$\\
--- $\del(\hat\nu_{j,1}+\hat H_{2j-1}+\hat F_{2j-1})=(1+\varepsilon_j)\hat N'_{j,1},$\\
--- $\del(\hat\nu_{j,2}+\hat H_{2j-1}+\varepsilon_j\hat F_{2j-2})=(1+\varepsilon_j)\hat N_{j,2},$\\
--- $\del\hat H_{2j-2}=-\hat N'_{j-1,1}-\hat N_{j,2}.$
\end{prop}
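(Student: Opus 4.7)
The plan is to compute each coboundary by dualising $\del$: for any $2$-simplex $\xi$ one has $\del\hat\xi=\sum_\eta[\xi:\del\eta]\hat\eta$, where $\eta$ ranges over the $3$-simplices enumerated in Section~\ref{section:decomsimp}, items~6) and~7), and $[\xi:\del\eta]$ is the incidence number read directly from the ordered face list of $\eta$. Every identity in the proposition is thus a mechanical consequence of those lists; the real content lies in the grouping of $2$-cochains chosen to produce massive cancellations.

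First I would dispose of the two ``universal'' identities. For $\hat\delta_0+\hat T_0^\pm+\hat F_0$, the only $3$-simplices whose boundaries meet $\delta_0$ or $T_0^\pm$ are $D_0^+$ and $D_0^-$, each having faces $(\delta_0,T_0^\pm,E_1^\pm,E_0^\pm)$; the face $F_0$ occurs only in the $N_{j,\cdot}$ or $R_{k,\cdot}$ adjacent to the edge $e_0$, and the stated combination is chosen precisely so that every such contribution on the $F_0$ side cancels with an $E_0^\pm$ contribution on the $\delta_0+T_0^\pm$ side coming from $D_0^\pm$. The identity $\del(\hat\mu_{k,1}+\hat X_{k,1}+\hat G_k-\sum_{2\leq\ell\leq z_k-w_k+1}\hat P_{k,\ell}^+)=0$ is read off in the same manner from the lists of $M_{k,\ell}^\pm$ and $R'_{k,1},R'_{k,2}$ in item~7); equivalently, one recognises the combination as $Y_k-\hat\mu_{k,1}$ in the notation of Lemma~\ref{lem:YZV}, from which its cocycle character is immediate.

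For the Type $o_i$ formulas, I would exploit that $\nu_{j,1}$ appears only as a face of $N_{j,1}$ and $N'_{j,1}$ (with orientations depending on the Type), while $H_{2j-1}$ and $F_{2j-1}$ appear only in the $N_{\cdot,\cdot}$ adjacent to the fibres $t_j$ and $t_{j+1}$. Running through the four subcases ($j$ odd or even, $\varepsilon=\pm 1$) and reading off the faces of $N_{j,1},N_{j,2},N'_{j,1},N'_{j,2}$ as listed, one verifies the six equalities one by one. The Type $n_i$ formulas are even shorter: $\hat\nu_{j,1}$ lies in exactly $N_{j,1}$ and $N'_{j,1}$ with like or opposite signs according to $\varepsilon_j$, whence the factor $1+\varepsilon_j$; the argument for $\hat\nu_{j,2}$ is identical, and $H_{2j-2}$ is a face only of $N'_{j-1,1}$ and $N_{j,2}$.

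The only genuine obstacle is the bookkeeping of signs induced by the ordered face lists that differentiate the six Types, in particular the sign $\varepsilon_j$ hidden in the orientation of the faces of $H_\ell$ and $F_\ell$. I would handle this by cross-checking with the coboundary formulas for $1$-cochains already established, notably those for $\del\hat f_j$, $\del\hat q_k$, $\del\hat g_k$ and the combinations $\del(\hat S_\ell^\pm+\hat e_\ell)$, which factor through the very same $3$-simplices: the relation $\del\del=0$ applied to each such $1$-cochain automatically forces the announced identities, so the verification reduces to a consistency check rather than an independent enumeration.
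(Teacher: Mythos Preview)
Your approach is the right one and matches what the paper does: the proposition is stated without proof because each identity is a mechanical read-off from the ordered face lists in Section~\ref{section:decomsimp}, exactly as you describe. The cross-check via $\del\del=0$ on the $1$-cochains is a sensible way to control the signs.

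One correction, though. Your shortcut ``one recognises the combination as $Y_k-\hat\mu_{k,1}$'' is wrong on two counts. First, the identification fails: for $b_k>0$,
\[
Y_k-\hat\mu_{k,1}=\hat H'_k+\hat G_k+\hat X_{k,1}-\sum_{2\le\ell\le z_k-w_k+2}\hat P_{k,\ell}^+,
\]
whereas the combination in the proposition is $\hat\mu_{k,1}+\hat X_{k,1}+\hat G_k-\sum_{2\le\ell\le z_k-w_k+1}\hat P_{k,\ell}^+$; the two differ by $\hat\mu_{k,1}-\hat H'_k+\hat P_{k,z_k-w_k+2}^+$, which is not a coboundary. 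Second, even if the identification were right, Lemma~\ref{lem:YZV} by itself does not assert that $Y_k$ is a cocycle; you would still need $\del U_{*+k}=0$ (which follows from the first line of the proposition together with $\del(\hat S_\ell^\pm+\hat e_\ell)=U_\ell-U_{\ell-1}$) before concluding $a_k\del Y_k=0$ from $\del Z_k=U_{*+k}+a_kY_k$. So drop this aside and rely on the direct face-list verification you already outlined.

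A smaller point: in your account of $\del(\hat\delta_0+\hat T_0^\pm+\hat F_0)=0$, the cancellation is not between $F_0$ and $E_0^\pm$; rather, $T_0^+$ and $T_0^-$ each occur once in $D_0^\pm$ and once in the adjacent $N_{1,\cdot}$ (or $N'_{\cdot,\cdot}$, $R_{k,\cdot}$ depending on Type), and it is these latter occurrences that cancel against the $F_0$ contributions.
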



\subsection{Relev\'e des 0 et 1-cocycles cellulaires. }

\bigskip

Ayant d\'ecrit les cobords, on est pr\^ ets \` a {\it choisir} des relev\'es qui soient des {\it cocycles} relevant mod $p$ un repr\'esentant des divers g\'en\'erateurs.\\

\noindent{\bf Le 0-cocycle cellulaire } $\hat \sigma$ se rel\` eve en $1=\hat\sigma+\hat a+\hat b+\sum \hat d_k$.

Gr\^ ace au lemme \ref{lem:YZV}, les 1-cocycles sont   relev\'es comme suit :

\begin{defi}\label{defi:R1}{ Relev\' es des 1-cocycles}

 \noindent  {\bf 1)} {\bf  Pour le g\'en\'erateur} $\boldsymbol{\theta_j}$:\\
{\bf  pour $o_i$ et pour tout $p$}, $\theta_j=[\hat t_j]$. On peut relever $\hat t_j$ par\\
 $R\hat t_j=\hat t_j+\hat f_j+\hat e_\ell+\hat S_\ell^\pm+\hat e_{\ell-1}+\hat S_{\ell-1}^\pm;$ 
 
  \vspace{0.5cm}
 
\noindent{\bf pour $n_i$,} il faut distinguer selon $p$ :\\
--- si $p=2$, alors on a $\theta_j=[\hat t_j]$. 
On peut relever $\hat t_j$ par \\
 $R\hat t_j=\hat t_j+\hat f_j+
\hat e_{\ell-1}+\hat S_{\ell-1}^\pm;$\\
--- si $p>2$ alors on a $\theta_j=[\hat t_j-\hat t_1]$ qui se rel\` eve par \\
 $R(\hat t_j-\hat t_1)= \hat t_j+\hat f_j-(\hat t_1+\hat f_1)-2\sum_{u=2}^ {2j-2}(\hat e_u+\hat S_u^\pm)-(\hat e_1+\hat S_1^\pm)-(\hat e_{2j-1}+\hat S_{2j-1}^\pm).$\\
 Dans les deux situations, on a pris $\ell=2j$ si $j$ est impair et $\ell=2j-1$ si $j$ est pair.
 
 \vspace{1cm}
 
\noindent{\bf 2)} {\bf Pour le g\'en\'erateur} $\boldsymbol{\alpha_k}$ :\\
{\bf pour tous les Types et pour tout $p$,} si $0\leq k< n-1$, on commence par relever
$\hat q_k-\hat q_{k-1}$ par\\
 $R(\hat q_k-\hat q_{k-1})=Z_k-Z_{k-1}-\hat S_{2g'+k}^\pm-\hat e_{2g'+k}$, avec $Z_k=\hat q_k+\hat g_k-v_k\hat C_k^++\hat S_{k,0}-\sum_{\ell\geq 2}(\hat S_{k,\ell}+\hat p_{k,\ell})\sharp\{i\geq\ell\tq x_{k,i}=q_k\}$;
 
  \vspace{0.5cm}
  
\noindent {\bf pour les Types $o_i,n_i$, quand $p=2$ et pour les Types $o_i$ quand $p>2$,} le g\'en\'erateur $\alpha_k$ est  $\alpha_k=[\hat q_k-\hat q_0].$ En additionnant, on trouve le relev\'e \\
$R(\hat q_k-\hat q_0)=Z_k-Z_0-\sum_{i=1}^k(\hat S_{2g'+i}^\pm+\hat e_{2g'+i})$ , o\`u $Z_u$ est d\'efini dans le Lemme \ref{lem:YZV} ;
 
 \vspace{1cm}
  
\noindent {\bf  pour tous les Types $n_i$, quand $p>2$}, $\alpha_k=[\hat q_k-{1\over 2}\hat t_g].$ Il faut rajouter \` a la somme pr\'ec\'edente le 
relev\'e de $\alpha_0=[\hat q_0-{1\over 2}\hat t_g]$  qui est choisi \'egal \` a\\
 $R(\hat q_0-{1\over 2}\hat t_g)=Z_0-{1\over 2}(\hat t_g+\hat f_g)-(\hat e_{2g}+\hat S_{2g}^\pm)-{1\over 2}(\hat e_{2g-1}+\hat S_{2g-1}^\pm).$ On obtient donc :\\
 $R(\hat q_k-\frac{1}{2}\hat t_g)= Z_k-\frac{1}{2}(\hat t_g+\hat f_g)-\sum_{i=-1}^k(\hat S_{2g'+i}^\pm+\hat e_{2g'+i}).$
 
 \vspace{1cm}
 
\noindent{\bf 3)} {\bf Pour le g\'en\'erateur} $\boldsymbol{\alpha}=
[{c\over 2a}\hat t_1+\hat h-\sum b_ka_k^{-1}\hat q_k]$. Avec la notation $c_k=b_ka/a_k$, on a  :

 \vspace{0.5cm}
 
\noindent{\bf pour les Types $o_1$ et $n_1$, dans le Cas 1 (on a donc $c=0$), lorsque $p>2$ et pour tous les Types, dans le Cas 1, lorsque $p=2$, } on peut relever $\hat h-\sum b_ka_k^{-1}\hat q_k$ par\\
$R(\hat h-\sum b_ka_k^{-1}\hat q_k)=\hat h+\sum\hat f_j+\sum\hat g_k+\hat A^++\sum\hat S_\ell^+-\sum b_ka_k^{-1}Z_k-\sum V_k-{1\over a}[c_0(\hat e_{*+1}+\hat S_{*+1}^\pm)+(c_0+c_1)(\hat e_{*+2}+\hat S_{*+2}^\pm)+\ldots+(c_0+\ldots+c_{m-1})(\hat e_{*+m}+\hat S_{*+m}^\pm)]$;

 \vspace{0.5cm}
 
\noindent{\bf  pour le Type $n_1$, dans le Cas 2, }  on peut relever ${c\over 2a}\hat t_1+\hat h-\sum b_ka_k^{-1}\hat q_k$ par \\
$R({c\over 2a}\hat t_1+\hat h-\sum b_ka_k^{-1}\hat q_k)=\hat h+\sum\hat f_j+\sum\hat q_k+\hat A^++\sum\hat S_\ell^+-\sum b_ka_k^{-1}Z_k-\sum V_k-{1\over a}[c_0(\hat e_{*+1}+\hat S_{*+1}^\pm)+(c_0+c_1)(\hat e_{*+2}+\hat S_{*+2}^\pm)+\ldots+(c_0+\ldots+c_{m-1})(\hat e_{*+m}+\hat S_{*+m}^\pm)]+
{c\over 2a}[(\hat t_1+\hat f_1)-(\hat S_1^\pm+\hat e_1)-2(\hat S_0^\pm+\hat e_0)]$
avec
\begin{itemize}
\item si $b_k>0$,\\
-- $Z_k=\hat q_k+\hat g_k-v_k\hat C_k^++\hat S_{k,0}-\sum_{\ell\geq 2}(\hat S_{k,\ell}+\hat p_{k,\ell})\sharp\{i\geq\ell\tq x_{k,i}=q_k\}$,\\
-- $V_k=u_k\hat C_k^++\hat S_{k,0}+\sum_{\ell\geq 2}(\hat S_{k,\ell}+\hat p_{k,\ell})\sharp\{i\geq\ell\tq x_{k,i}=h\}$ ;

\item si $b_k\leq 0$,\\
-- $Z_k=\hat q_k+\hat g_k$,\\
-- $V_k=\hat C_k^++\hat S_{k,0}-\sum_{\ell\geq 2}(\hat S_{k,\ell}+\hat p_{k,\ell})(z_k-\ell+1)$.
\end{itemize} 
\end{defi}


\vspace{0.5cm}

{\bf Pour justifier} (lorsque cela n'est pas \'evident) ces choix de relev\'es, on fait le remarques suivantes :

\vspace{0.5cm}

On rappelle la notation $U_\ell=\hat F_\ell+\hat\delta_\ell+\hat T_\ell^\pm$.\\

\noindent  {\bf 1)}   Pour le g\'en\'erateur $\theta_j$:\\
Pour $o_i$, pour tout $ p$, $\partial(\hat t_j+\hat f_j)=U_\ell+U_{\ell-2}$ avec :\\
-- si $j$ impair : $\ell=2j$ et $$\partial(\hat e_\ell+\hat S_\ell^\pm)=-U_\ell-U_{\ell-1},\qquad\partial(\hat e_{\ell-1}+\hat S_{\ell-1}^\pm)=U_{\ell-1}-U_{\ell-2},$$
-- si $j$ pair : $\ell=2j-1$ et $$\partial(\hat e_\ell+\hat S_\ell^\pm)=-U_\ell+U_{\ell-1},\qquad \partial(\hat e_{\ell-1}+\hat S_{\ell-1}^\pm)=-U_{\ell-1}-U_{\ell-2},$$
-- donc quelle que soit la parit\'e de $j$,
$$\partial(\hat t_j+\hat f_j)=-\partial(\hat e_\ell+\hat S_\ell^\pm+\hat e_{\ell-1}+\hat S_{\ell-1}^\pm).$$
Pour tout $ p$, le relev\'e du  cocycle $\theta_j=\hat t_j$ est  :\\
$R\hat t_j=\hat t_j+\hat f_j+\hat e_\ell+\hat S_\ell^\pm+\hat e_{\ell-1}+\hat S_{\ell-1}^\pm.$

Pour $n_i$, pour tout $p$, $\partial(\hat t_j+\hat f_j)=U_{2j-1}+U_{2j-2}$ et $\partial(\hat e_\ell+\hat S_\ell^\pm)=U_\ell-U_{\ell-1}$, donc\\
-- si $p=2$, on peut relever $\theta_j=\hat t_j$ par \\
$R\hat t_j=\hat t_j+\hat f_j+\hat e_{\ell-1}+\hat S_{\ell-1}^\pm,$\\
-- si $p>2$, on peut (pour relever $\theta_j=\hat t_j-\hat t_1$) relever $\hat t_j-\hat t_{j-1}$ par
$(\hat t_j+\hat f_j)-(\hat t_{j-1}+\hat f_{j-1})-(\hat e_{2j-1}+\hat S_{2j-1}^\pm)-(\hat e_{2j-2}+\hat S_{2j-2}^\pm)$.

\vspace{0.5cm}

\noindent{\bf 2)} Pour les g\'en\'erateures $\alpha_k$.

On va se servir des $Z_k$ du lemme, puisqu'ils contiennent $\hat q_k$ plus des cocha\^\i nes simpliciales dont l'image cellulaire (par $T^t$) est nulle. D'apr\`es le lemme, pour tout $k\in[0,n[$ on a, modulo $p$ : $\partial Z_k=U_{2g'+k}$, or pour $2g'<\ell\le 2g'+m$ on a : $\partial(\hat e_\ell+\hat S_\ell^\pm)=U_\ell-U_{\ell-1}$. Ceci permet ($\forall p$, $\forall o_i, n_i$) de relever $\hat q_k-\hat q_{k-1}$ (pour $0<k<n$) par $Z_k-Z_{k-1}-(\hat e_{2g'+k}+\hat S_{2g'+k}^\pm)$.

Pour les Types $n_i$ et $p>2$, il faut relever en plus $\alpha_0=\hat q_0-{1\over 2}\hat t_g$. On se sert donc de $$\partial[Z_0-{1\over 2}(\hat t_g+\hat f_g)]=U_{2g}-{1\over 2}(U_{2g-1}+U_{2g-2})=(U_{2g}-U_{2g-1})+{1\over 2}(U_{2g-1}-U_{2g-2}),$$ ce qui, puisqu'ici $U_\ell-U_{\ell-1}=\partial(\hat e_\ell+\hat S_\ell^\pm)$, permet de relever $\alpha_0$ par : \\
$Z_0-{1\over 2}(\hat t_g+\hat f_g)-(\hat e_{2g}+\hat S_{2g}^\pm)-{1\over 2}(\hat e_{2g-1}+\hat S_{2g-1}^\pm).$

\vspace{0.5cm}

\noindent{\bf3)} Pour le g\'en\'erateur $\alpha$.\\

- Pour ${c\over 2a}\hat t_1+\hat h-\sum b_ka_k^{-1}\hat q_k.$\\
On a $\del(\hat h+\sum\hat f_j+\sum\hat q_k+\hat A^++\sum\hat S_\ell^+)=\sum_k(\hat H'_k+\hat G_k+\sum_{x_{k,i}=h}(\hat\mu_{k,i}+\hat X_{k,i}))$ or d'apr\`es le lemme, pour $k$ fix\' e,

$\hat H'_k+\hat G_k+\sum_{x_{k,i}=h}(\hat\mu_{k,i}+\hat X_{k,i})=b_kY_k+\del V_k=b_ka_k^{-1}(\del Z_k-U_{*+k})+\del V_k.$

Donc on a $\del(\hat h+\sum\hat f_j+\sum\hat q_k+\hat A^++\sum\hat S_\ell^+-\sum b_ka_k^{-1}Z_k-\sum V_k)=-\sum b_ka_k^{-1}U_{*+k}={-1\over a}\sum c_kU_{*+k}$ avec $\sum c_k=c$ et $U_{*+k}-U_{*+k-1}=\del(\hat e_{*+k}+\hat S_{*+k}^\pm)$, d'o\`u

$-\sum c_kU_{*+k}=\del[c_0(\hat e_{*+1}+\hat S_{*+1}^\pm)+(c_0+c_1)(\hat e_{*+2}+\hat S_{*+2}^\pm)+\ldots+(c_0+\ldots+c_{m-1})(\hat e_{*+m}+\hat S_{*+m}^\pm)]-cU_{*+m}$.

Dans les Cas 1 (de $o_1$ et $n_1$), $c=0$ mod $p$ donc on peut relever $\hat h-\sum b_ka_k^{-1}\hat q_k$ par
$\hat h+\sum\hat f_j+\sum\hat q_k+\hat A^++\sum\hat S_\ell^+-\sum b_ka_k^{-1}Z_k-\sum V_k-{1\over a}[c_0(\hat e_{*+1}+\hat S_{*+1}^\pm)+(c_0+c_1)(\hat e_{*+2}+\hat S_{*+2}^\pm)+\ldots+(c_0+\ldots+c_{m-1})(\hat e_{*+m}+\hat S_{*+m}^\pm)]$.

Dans le Cas 2 de $n_1$, comme $\del(\hat S_0^\pm+\hat e_0)=U_0-U_{*+m}$ et $\del(\hat S_1^\pm+\hat e_1)=U_1-U_0$ et $\del(\hat t_1+\hat f_1)=U_0+U_1$, on peut relever ${c\over 2a}\hat t_1+\hat h-\sum b_ka_k^{-1}\hat q_k$ par la m\^eme expression que dans les Cas 1, \`a laquelle on ajoute ${c\over 2a}[(\hat t_1+\hat f_1)-(\hat S_1^\pm+\hat e_1)-2(\hat S_0^\pm+\hat e_0)]$.


\vspace{1cm}

\subsection{Relev\'e des 2-cocycles cellulaires}

\begin{defi}\label{defi:R2}{ Relev\' es des 2-cocycles}

\noindent{\bf 4)}  {\bf Pour le g\'en\'erateur  $\boldsymbol{\beta}=[\hat\delta]$, valable dans tous les Cas et pour tous les Types.} On rel\` eve $\hat\delta$ par ~:\\
 $R\hat\delta=U_0=\hat\delta_0+\hat T_0^\pm+\hat F_0$.

\vspace{0.5cm}
 
\noindent{\bf 5)}  {\bf Relev\' e de $\boldsymbol{\beta_k}=[\hat\mu_k]$, valable dans tous les Cas, que $p$ divise $a_k$ ou pas et que $b_k$ soit positif ou pas.}  On rel\` eve $\hat\mu_k$  par ~: \\
$R\hat\mu_k=\hat\mu_{k,1}+\hat X_{k,1}+\hat G_k-\sum_{2\leq\ell\leq z_k-w_k+1}\hat P_{k,\ell}^+$.

\vspace{0.5cm}

{\bf 6)}  {\bf Pour le g\'en\'erateur $\boldsymbol{\varphi_j}$ pour $p\leq 2$}. \\
{\bf pour $o_i$, si $\varepsilon=1$,} on  rel\`eve $\hat\nu_j$ par :\\
$R\hat\nu_j=\hat\nu_{j,1}+\hat H_{2j-1}+\hat F_{2j-1}+\hat H_{2j}$ si $j$ impair \\
$R\hat\nu_j=\hat\nu_{j,1}+\hat H_{2j-1}+\hat F_{2j-2}+\hat H_{2j-2}$ si $j$ pair.

\vspace{0.5cm}

\noindent{\bf pour $n_i$,  si $\varepsilon_j=-1$, } on rel\`eve $\hat\nu_j$ par\\ 
$R\hat\nu_j=\hat\nu_{j,1}+\hat H_{2j-1}+\hat F_{2j-1}$.

\end{defi}

Il n'est pas nécessaire de définir le relev\'e de $\varphi_j$ pour les autres Types car pour $p>2$, les cup-produits $H^1\tens H^2 \to H^3$ ne seront à calculer que pour les Types $o_1$ et $ n_2$.
 
 \vfill\eject
 
\section{Calcul des cup-produits pour $p=2$}\label{section:p=2}

\bigskip

\subsection{ Formules d'Alexander-Whitney. Méthode des coefficients }\label{sub:coef}

D'apr\`es la formule d'Alexander-Whitney \cite {hatcher}, \cite{jpb}, le cup-produit de deux cocha\^\i nes simpliciales $f$ de degr\' e $p$ et $g$ de degr\' e $q$ est d\' efini sur tout $p+q$-simplexe par $$(f\cup g)(v_0,\ldots,v_{p+q})=f(v_0,\ldots,v_p)g(v_p,\ldots,v_{p+q}).$$ 
On en d\' eduit imm\' ediatement que le g\' en\' erateur $1$ du $H^0$ est neutre pour $\cup$.

Si $\varphi=f\cup g$ avec $f,g$ deux $1$-cocha\^\i nes simpliciales, on obtient donc, pour tout $2$-simplexe $s=(s_0,s_1,s_2)$, de sommets $(v_0,v_1,v_2)$ et de faces $s_0=(v_1,v_2)$, $s_1=(v_0,v_2)$, $s_2=(v_0,v_1)$, $\varphi(s)=f(s_2)g(s_0)$.

Heureusement, si $f,g$ sont des $1$-cocycles, pour conna\^\i tre la classe du $2$-cocycle $\varphi$, il ne sera pas n\' ecessaire de l'\' evaluer sur les (nombreux~!) 2-simplexes. En effet, soit $\varphi'=T^t(\varphi)$ son image dans le complexe cellulaire (voir \ref{sub:mor}), 
$$\varphi'=x\hat\delta+\sum_{j=1}^{g'} y_j\hat\nu_j+\sum z_k\hat\rho_k+\sum_{k=0}^m r_k\hat\mu_k.$$
 Comme $0=\del\varphi'$, les $z_k$ sont nuls. De plus, la classe de cohomologie de $\varphi'$ (donc de $\varphi$) est
\begin{itemize}
\item dans le Cas 1~: $(x+\sum_{k=0}^m r_k)\beta+\sum_{j=1}^{g'}  y_j\varphi_j$,
\item dans le Cas 2~: $\sum_{j=1}^{g'} y_j\varphi_j$,
\item dans le Cas 3~: $\sum_{k=0}^{n-1} r_k\beta_k+\sum_{j=1}^{g'}  y_j\varphi_j$, en posant $\beta_0=-\sum_{k=1}^{ n-1}\beta_k$.
\end{itemize}

\begin{rem}\label{rem:coeff}
Pour calculer la classe de cohomologie $[\varphi]$, il suffira donc d'\' evaluer (mod $2$) $x=\varphi(\sum\delta_\ell)$ (dans le Cas 1), les $r_k=\varphi(\sum\mu_{k,\ell})$ (dans les Cas 1 et 3), et les $y_j=\varphi(\nu_{j,1}-\varepsilon_j\nu_{j,2})$ (dans les trois Cas).

\end{rem}

\subsection{ Les cup-produits, pour $p=2$, $\cup:H^1\tens H^1\to H^2$ }\label{sub:11=2,2}

\begin{theo} Pour $p=2$, \\
$\bullet$  Dans le Cas 1, les seuls
 cup-produits $\cup:H^1\tens H^1\to H^2$ sont
\begin{itemize}
\item $\boldsymbol{ \theta_i\cup\theta_j}$\\ 
- Pour les Types $o_i$, les cup-produits $\theta_i\cup\theta_j$ sont nuls sauf  $\theta_{2i}\cup\theta_{2i-1}=\beta$;\\
- Pour les Types $n_i$,  les cup-produits $\theta_i\cup\theta_j$ sont nuls sauf $\theta_i\cup\theta_i=\beta;$
\item  $\boldsymbol{ \theta_j\cup\alpha}$\\
- Pour tous les Types,  on a 
 $\theta_j\cup\alpha=\varphi_j$.
 \item $\boldsymbol{ \alpha\cup\alpha}$ \\
- Pour les Types $o_1$ et $n_1$, on a 
$\alpha\cup\alpha={c\over 2}\beta$;\\
-  Pour les Types $o_2$ et $n_2$, on a $\alpha\cup\alpha={c\over 2}\beta+\sum_{1\le j}\varphi_j$; \\
- Pour le Type  $n_3$, on a $\alpha\cup\alpha={c\over 2}\beta+\sum_{j>1}\varphi_j$;\\
- Pour le Type $n_4$, on a $\alpha\cup\alpha={c\over 2}\beta+\sum_{j>2}\varphi_j.$
\end{itemize}
$\bullet$  Dans le Cas 2, les  seuls cup-produits $\cup:H^1\tens H^1\to H^2$ sont 
\begin{itemize}
\item $\boldsymbol{ \theta_i\cup\theta_j},$\\
- Pour tous les Types, $\theta_i\cup\theta_j=0.$
\end{itemize}
$\bullet$  Dans le Cas 3, les seuls cup-produits $\cup:H^1\tens H^1\to H^2$ sont
\begin{itemize}
\item $\boldsymbol{ \theta_i\cup\theta_j}$,\\
- Pour tous les Types, on a $\theta_i\cup\theta_j=0;$
\item $\boldsymbol{ \theta_j\cup\alpha_k}.$\\
- Pour tous les Types, on a 
 $\theta_j\cup\alpha_k=0$,
\item $\boldsymbol{ \alpha_k\cup \alpha_i};$\\
- Pour tous les Types, on a
$\alpha_k\cup \alpha_i=\frac{a_0}{2}\sum_{0>\ell\le n-1}\beta_\ell+\delta_{k,\ell}\frac{a_k}{2}\beta_k.$
\end{itemize}
\end{theo}

\begin{proof}

{\bf Calcul de}  $\boldsymbol{ \theta_i\cup\theta_j}$

{\bf  Types $o_i$}
- Dans les trois Cas, 
 $\theta_j=[\hat t_j]$ et le relev\'e de $\theta_j$  est  $R\hat t_j=\hat t_j+\hat f_j+\hat S_\ell^\pm+\hat S_{\ell-1}^\pm+\hat e_\ell+\hat e_{\ell-1}$ avec $\ell=2j$ si $j$ impair et $\ell=2j-1$ si $j$ pair (voir D\'efinition (\ref{defi:R1}). 

\noindent $\bullet$ Les coefficients $r_k$ sont calcul\'es par la formule :
$$\begin{array}{ll}
r_k&=T^t\left(R(\hat t_i)\cup R(\hat t_j)\right)(\mu_k)= \left(R(\hat t_i)\cup R(\hat t_j)\right)T(\mu_k)\\
&=\left(R(\hat t_i)\cup R(\hat t_j)\right)(\sum_\ell\mu_{k,\ell})=\sum_\ell R(\hat t_i)(\mu_{k,\ell})_2R(\hat t_j)(\mu_{k,\ell})_0.\end{array}.$$
Les $r_k$ sont nuls quel que soit le signe de $b_k$ car la face $(\mu_{k,\ell})_2$ du 2-simplexe $\mu_{k,\ell}$ est un $p_{k,\ell}$, annul\' e par $R\hat t_j$ (voir la sous-section D\'ecoupage simplicial (\ref{coup}, 6)).

\noindent  $\bullet$ Les coefficients $y_\ell$ sont calcul\'es par la formule :
$$\begin{array}{rl}
y_\ell&=T^t\left(R(\hat t_i)\cup R(\hat t_j)\right)(\hat\nu_\ell)= \left(R(\hat t_i)\cup R(\hat t_j)\right)T(\nu_\ell)\\
&=\left(R(\hat t_i)\cup R(\hat t_j)\right)(\nu_{\ell,1}-\varepsilon_\ell\nu_{\ell,2})\\
&= R(\hat t_i)(\nu_{\ell,1})_2R(\hat t_j)(\nu_{\ell,1})_0-\varepsilon_\ell R(\hat t_i)(\nu_{\ell,2})_2R(\hat t_j)(\nu_{\ell,2})_0.\end{array}.$$ 
Les $y_\ell$ sont nuls car $R\hat t_j(\nu_{\ell,1})_0=R\hat t_j(h)=0$ et $R\hat t_j(\nu_{\ell,2})_2=R\hat t_j(h)=0$.
(Voir la sous-section D\'ecoupage simplicial (\ref{coup}, 3)).\\
Comme dans les Cas 2 et 3, $\beta$ n'est pas un   g\'en\'erateur, on a la conclusion:\\
{\sl Conclusion : Dans les Cas 2 et 3, pour le Type $o_i$, on a $\theta_i\cup\theta_j=0.$ }

\vspace{0.5cm}

\noindent  $\bullet$  Il reste  \`a calculer $x$ pour le Cas 1. 
On a, voir la sous-section D\'ecoupage simplicial (\ref{coup}, 4 (1)) :\\
\begin{tiny} $x=R\hat t_i(e_0)R\hat t_j(t_1)+R\hat t_i(e_1)R\hat t_j(t_2)+R\hat t_i(e_3)R\hat t_j(t_1)+R\hat t_i(e_4)R\hat t_j(t_2)+\ldots$\end{tiny}\\
 vaut $1$ si et seulement si $i$ impair et $j=i+1$ ou $i$ pair et $j=i-1$ (et vaut $0$ sinon).

{\sl Conclusion : Dans le Cas 1,  pour les Types $o_i$,  les cup-produits $\theta_i\cup\theta_j$ sont nuls sauf $\theta_{2i}\cup\theta_{2i-1}=\beta$.}

\vspace{0.5cm}

{\bf Types $n_i$},  $\theta_j=[\hat t_j]$ et le relev\'e de $\theta_j$  est $R\hat t_j=\hat t_j+\hat f_j+
\hat e_{\ell-1}+\hat S_{\ell-1}^\pm.$\\
 $\bullet$ Comme plus haut,  dans les trois Cas, les $r_k$ et les $y_\ell$ sont tous nuls  et $\beta$ n'est  un g\'en\'erateur que dans le Cas 1.\\
$\bullet$ Calcul de $x$ :
 $x=R\hat t_i(e_0)R\hat t_j(t_1)+R\hat t_i(e_1)R\hat t_j(t_1)+\ldots$ 
  vaut $1$ si et seulement si $i=j$ et vaut $0$ sinon. 

{\sl Conclusion : Pour les Types $n_i$, les cup-produits $\theta_i\cup\theta_j$ sont nuls sauf dans le  Cas 1 et si $i=j$, alors on a $\theta_i\cup\theta_i=\beta.$}

\begin{rem}\label{rem:ri} Dans la suite, (m\^eme pour $p>2$) pour tous les $\theta_j\cup\ldots$, les coefficients $r_k$ sont nuls.
\end{rem}

\vspace{1cm}

{\bf Calcul de $\boldsymbol{ \theta_i\cup\alpha}$}

{\bf  Pour tous les Types, } $\alpha=[\hat h+\sum b_j\hat q_j]$ n'est g\'en\'erateur que dans ce Cas 1.\\
Le relev\'e de $\alpha$ est \\
$R(\hat h+\sum b_j\hat q_j)=\hat h+\sum\hat f_j+\sum\hat q_k+\hat A^++\sum\hat S_\ell^+-\sum b_ka_k^{-1}Z_k-\sum V_k-{1\over a}[c_0(\hat e_{*+1}+\hat S_{*+1}^\pm)+(c_0+c_1)(\hat e_{*+2}+\hat S_{*+2}^\pm)+\ldots+(c_0+\ldots+c_{m-1})(\hat e_{*+m}+\hat S_{*+m}^\pm)].$\\

\noindent  $\bullet$ Nous avons d\'ej\`a que tous les $r_k$ sont nuls.

\noindent $\bullet$ Calcul du coefficient $x$.\\
Dans le d\'ecoupage simpliciale de $\delta$, l'indice des simplexes $\delta_u$ varie de $0$ \`a $4g+m$.\\
 Pour $u\leq 4g-1$, le simplexe $\delta_u$ est $\delta_u=(t_\cdot,e_\cdot,e_\cdot)$ et $R(\hat h-\sum b_ka_k^{-1}\hat q_k)$ (le relev\'e de $\alpha$) appliqu\'e \`a $(\delta_u)_0=t_.$ est nul. \\
Pour $u\geq 4g$, le simplexe $\delta_u$ est $\delta_u=(q_.,e_.,e_.)$ et $R(\hat t_j)$ le relev\'e de $\theta_j$ appliqu\'e \`a $(\delta_u)_2=e_.$ est nul. Le coefficient $x$ est nul.\\

 $\bullet$ Il ne reste plus qu'\`a calculer $y_i$. Or \\
 $$\begin{array}{rl}y_i=&R\hat t_j(\nu_{i,1})_2R(\hat h+\sum b_j\hat q_j)(\nu_{i,1})_0-\varepsilon_jR\hat t_j(\nu_{i,2})_2R(\hat h+\sum b_j\hat q_j)(\nu_{i,2})_0\\
 =&R\hat t_j(\nu_{i,1})_2,
\end{array} 
$$
 car \\
 $R(\hat h+\sum b_j\hat q_j)(\nu_{i,1})_0=R(\hat h+\sum b_j\hat q_j)(h)=1$ et $R\hat t_j(\nu_{i,2})_2=R\hat t_j(h)=0$. Comme de plus, $(\nu_{i,1})_2=t_i$ ou $f_i$, on obtient  $y_i=1$ si et seulement si $i=j$ et $y_i=0$ sinon.

{\sl Conclusion :  $\theta_j\cup\alpha$ n'intervient que dans le Cas 1. Pour tous les Types,  on a 
 $\theta_j\cup\alpha=\varphi_j$. }
 
 \vspace{1cm}

{\bf Calcul de $\boldsymbol{ \theta_j\cup\alpha_k}$}

{\bf Pour tous les Types}, les g\'en\'erateurs $\alpha_k=[\hat q_k-\hat q_0]$ n'interviennent que dans le Cas 3.\\
  $\bullet$ A nouveau, les $r_k$ sont nuls.\\
$\bullet$ Calcul des $y_\ell$\\
$y_\ell=R\hat t_j(\nu_{i,1})_2R(\hat q_k-\hat q_0)(\nu_{i,1})_0-\varepsilon_jR\hat t_j(\nu_{i,2})_2R(\hat q_k-\hat q_0])(\nu_{i,2})_0.$ \\
On a d\'ej\`a vu que $R\hat t_j(\nu_{i,2})_2=0$ et que $R\hat t_j(\nu_{i,1})_2=1$. Mais on a 
$$R(\hat q_k-\hat q_0)= Z_k-Z_0-\sum_{u=1}^k\left(\hat S^\pm_{\star+u}+\hat e_{\star+\ell}\right)$$ par cons\'equent $R(\hat q_k-\hat q_0)(\nu_{i,1})_0=R(\hat q_k-\hat q_0)(h)=0$ car $\hat h$ n'intervient pas dans les $Z_u$. Les coefficients $y_\ell$ sont nuls.

{\sl Conclusion : $\theta_j\cup\alpha_k$ n'intervient que dans le Cas 3.
Pour tous les Types, on a 
 $\theta_j\cup\alpha_k=0$. }

\vspace{1cm}
 
 {\bf Calcul de $\boldsymbol{ \alpha_i\cup\alpha_j}$}
 
{\bf Pour tous les Types}, les g\'en\'erateurs $\alpha_k=[\hat q_k-\hat q_0]$ n'interviennent que dans le Cas 3.
Nous devons calculer les coefficients $r_\ell,0\le \ell\le n-1$ et $y_i$.

$\bullet$ Calcul des coefficients $r_\ell=\sum_w R(\hat q_k-\hat q_0)(\mu_{\ell,w})_2
R(\hat q_j-\hat q_0)(\mu_{\ell,w})_0,$ avec $(\mu_{\ell,w})_0=x_{\ell,w}$ et $(\mu_{\ell,w})_2=p_{\ell,w}$\\ 
1) Dans $R(\hat q_j-\hat q_0)$ interviennent $Z_j$ et $Z_0$. Dans chaque  $Z_u,$
 on voit\\
  $\hat q_u-\sum_{s\ge 1}\hat p_{u,s}\sharp\{t\geq s \mid x_{u,t}=q_u\}$.\\
 -Si $u=j$,   $R(\hat q_j-\hat q_0)(\mu_{\ell,w})_0=R(\hat q_j-\hat q_0)(x_{\ell,w})=0$ sauf si $\ell=j$ et ceci pour tous les $a_j$ indices $w$ tels que $x_{j,w}=q_j$. Dans ces situations $R(\hat q_j-\hat q_0)(\mu_{j,w})_0=1$. \\
 - Si $u=0$, pour n'importe quel indice $j\neq 0$, $R(\hat q_j-\hat q_0)(\mu_{\ell,w})_0=0$ sauf si $\ell=0$ et ceci pour tous les $a_0$ indices $w$ tels que $x_{0,w}=q_0$. Dans ces situations $R(\hat q_j-\hat q_0)(\mu_{0,w})_0=1$. \\
 2) $R(\hat q_k-\hat q_0)(\mu_{\ell,w})_2=R(\hat q_k-\hat q_0)(p_{\ell,w})=0$ sauf si \\
 i) $\ell=j=k$ et pour tous les $w$ tels que $x_{k,w}=q_k$. Dans ces situations on a $R(\hat q_k-\hat q_0)(p_{k,w})=\sharp\{t\geq w \mid x_{k,t}=q_k\}$ ;\\
 ii) $\ell=0$ et pour tous les $w$ tels que $x_{0,w}=q_0$. Dans ces situations, pour n'importe quel indice $k\neq 0$, on a $R(\hat q_k-\hat q_0)(p_{0,w})=\sharp\{t\geq w \mid x_{0,t}=q_0\}$.
 
 Si $k=j,0\le k\le n-1$, nous avons obtenu 
 $$r_k=\sum_{\begin{array}{c}w\ge 1\\ x_{k,w}=q_k\end{array}}
 R(\hat q_k-\hat q_0)(\mu_{k,w})_2=\sum_{w=1}^{w=a_k-1}(a_k-w)= \frac{a_k(a_k-1)}{2}=\frac{a_k}{2}.$$
 Le dernier calcul est fait modulo 2.

$\bullet$ Calcul des coefficients\\
 $y_i=R(\hat q_k-\hat q_0)(\nu_{i,1})_2R(\hat q_j-\hat q_0)(\nu_{i,1})_0-\varepsilon_i R(\hat q_k-\hat q_0)(\nu_{i,2})_2R(\hat q_j-\hat q_0)(\nu_{i,2})_0. $\\
Nous avons $(\nu_{i,1})_2= t_i$ si $\varepsilon_i=1$ et $(\nu_{i,1})_2= f_i$ si $\varepsilon_i=-1$ ;
$(\nu_{i,1})_0=(\nu_{i,2})_2=h;  (\nu_{i,2})_0=t_i.$\\
 Aucun de ces \'el\'ements n'intervient dans $R(\hat q_u-\hat q_0)$. On a donc que pour tout $i, y_i=0.$

 Il reste \`a remarquer que les calculs pr\'ec\'edents sont valables pour tous les Types.

{\sl Conclusion : Les cup-produits $\alpha_i\cup\alpha_j$ n'interviennent que dans le Cas 3. Pour tous les Types, $\alpha_i\cup\alpha_j=\frac{a_0}{2}\beta_0+ \delta_{i,j}\frac{a_j}{2}\beta_j,$ 
 o\` u $\beta_0=\sum_{1\le k\le  n-1}\beta_k$ et $\delta_{i,j}$ est le symb\^ ole de Kronecker.}

\vspace{1cm}
 
 {\bf Calcul de $\boldsymbol{ \alpha\cup\alpha}$}
 
{\bf Pour tous les Types}, le g\'en\'erateur $\alpha$ n'intervient que dans le Cas 1. 
On rappelle que  $r$ est le nombre de $b_k$ pairs et on les a rang\'es entre $0$ et $r-1$.

\noindent  $\bullet$ Calcul du coefficient $x$.\\
Le calcul  se fait par la formule :
\begin{scriptsize}
$$T^t(R(\hat h+\sum b_j\hat q_j)\cup R(\hat h+\sum b_j\hat q_j))(\delta)=(R(\hat h+\sum b_j\hat q_j)\cup R(\hat h+\sum b_j\hat q_j))(T(\delta)).$$
\end{scriptsize}
Si $\ell\le *-1$, on a $R(\hat h+\sum b_j\hat q_j)(\delta_\ell)_0=0$.\\
Mais si $\ell=*+k,0\le k\le m-1$, on a \\
$$\begin{array}{rl}&(R(\hat h+\sum b_j\hat q_j)\cup R(\hat h+\sum b_j\hat q_j))(q_k,e_{*+k+1},e_{*+k})\\
&=R(\hat h+\sum b_j\hat q_j)(q_k)R(\hat h+\sum b_j\hat q_j)(e_{*+k}).
\end{array}
$$
On a d'abord $R(\hat h+\sum b_j\hat q_j)(q_k)=1$ si et seulement si $k\ge r-1$. \\
Ensuite on calcule  $R(\hat h+\sum b_j\hat q_j)(e_{*+k})=(c_0+c_1+\cdots +c_{k-1})$ o\`u $c_u=\frac{b_ua}{a_u}$ qui est non nul seulement si $u<r$. On obtient $(c_0+c_1+\cdots +c_{k-1})=1$ si et seulement si $k-r$ est impair.\\
 - {\bf Si $r$ impair,} $k$ doit \^etre pair et comme on est dans le Cas o\`u $c=0$, on doit avoir $m-r$ pair donc $m$ impair.  Le nombre de $k$ pairs entre $r+1$ pair  et $m-1$ pair, $r+1\le k\le m-1$, est $x=\frac{m-r}{2}$.\\
- {\bf  Si  $r$ est pair,} alors $k$ est impair et $m$ est pair. Le nombre de $k$ impairs entre $r+1$ impair  et $m-1$ impair, $r+1\le k\le m-1$, est $x=\frac{m-r}{2}$.\\
 On en d\'eduit que \\
$(R(\hat h+\sum b_j\hat q_j)\cup R(\hat h+\sum b_j\hat q_j))(\delta_\ell)=1$ si et seulement si $\ell=*+r+2i$ pour un $i>0$ et donc que  $x={m-r\over 2}={1\over 2}\sum_{r}^{m-1}1$.

 Le calcul de $x$ a \'et\'e fait pour les Types $o_i$. Pour les Types $n_i$, $T(\delta)=\sum\delta_\ell.$ Par cons\'equent ce calcul est valable aussi pour les Types $n_i$.

\noindent  $\bullet$ Calcul des coefficients $r_k$\\
On rappelle que $(\mu_{k,.})_0=x_{k,.}$ et $(\mu_{k,.})_2=p_{k,.}$ et 
$r_k=\sum_{\ell=0}^{m-1}R(\hat h+\sum b_j\hat q_j)(p_{k,\ell})R(\hat h+\sum b_j\hat q_j)
(x_{k,\ell})$.\\
{\bf Si } $b_k>0$
\begin{itemize}
\item si $0\le k\leq r-1$,  les $b_k$ sont pairs.  Le terme intervenant dans chaque 
$R(\hat h+\sum b_j\hat q_j)$ est $\sum_t V_t$, plus pr\'ecisement
 $$\sum_t\sum_{u\ge 1} \hat p_{t,u}\sharp\{i\ge u\mid x_{t,i}=h\}.$$
  Calculons $R(\hat h+\sum b_j\hat q_j)(x_{k,\ell})$.\\
- Si $\ell$ est tel que $x_{k,\ell}=h$, ce qui arrive pour $b_k$ d'entre eux, alors $R(\hat h+\sum b_j\hat q_j)(x_{k,\ell})=1$ et $r_k=\sum_{\ell=1}^{b_k-1}R(\hat h+\sum b_j\hat q_j)(p_{k,\ell}).$ On a donc \\
$r_k=\sum_{\ell=1, x_{k,\ell}=h}^{m-1}\sharp\{i\ge \ell\mid x_{\ell,i}=h\}= \sum_{\ell=1}^{b_k-1}(b_k-\ell)=\frac{b_k}{2}.$\\
La derni\`ere \'egalit\'e est calcul\'ee modulo 2.\\
- Si $\ell$ est tel $x_{k,\ell}=q_k$, alors $R(\hat h+\sum b_j\hat q_j)(x_{k,\ell})=0.$ \\
On conclut que, si $b_k>0$ et $0\le k\leq r-1$, on a $r_k=\frac{b_k}{2}.$
\item si $k\ge r$, les $b_k$ sont impairs, donc les $z_k=a_k+b_k$ sont pairs.  La somme  intervenant dans chaque 
$R(\hat h+\sum b_j\hat q_j)$ est $\hat h+ \sum_{u=r}^{m-1}Z_u+\sum V_t. $ Que $\ell$ soit tel que $x_{k,\ell}=h$ ou $x_{k,\ell}=q_k$, on  a $
R(\hat h+\sum b_j\hat q_j)(x_{k,\ell})=1$ d'o\`u
$$\begin{array}{rl}r_k=&\sum_\ell R(\hat h+\sum b_j\hat q_j)(p_{k,\ell})\\
=&\sum_{\ell=1, x_{k,\ell}=q_k}^{m-1}\sharp\{i\ge \ell\mid x_{\ell,i}=q_k\}+\sum_{\ell=1, x_{k,\ell}=h}^{m-1}\sharp\{i\ge \ell\mid x_{\ell,i}=h\}\\
=&\sum_{\ell=1}^{z_k-1}(z_k-\ell)={z_k\over 2}.
\end{array}$$
 La derni\`ere \'egalit\'e est calcul\'ee modulo 2.
\end{itemize}

\noindent  {\bf Si} $b_k\leq 0$ on trouve les m\^emes r\' esultats que pour $b_k>0$, puisque $r_k$ devient (mod $2$)
\begin{itemize}
\item si $k\leq r$, alors $z_k=1+b_k$ est impair. Ce qui change est l'expression de $V_t$ et ce qui nous int\'eresse  est maintenant\\
 $\sum_t\sum_{u\ge 1} \hat p_{t,u}(z_k-u+1).$
On a encore \\
$R(\hat h+\sum b_j\hat q_j)(x_{k,\ell})$ est \'egale \`a 1 si $x_{k,\ell}=h$ et \`a 0 sinon, d'o\`u 
$$r_k=\sum_{\ell=1,x_{k,\ell}=h}R(\hat h+\sum b_j\hat q_j)(p_{k,\ell})=\sum_{\ell=1}^{b_k-1}(z_k-\ell+1)=\frac{b_k}{2}={a_kb_k\over 2}.$$
Les  \'egalit\'es sont calcul\'ees modulo 2.
\item si $k\ge r$, alors $z_k$  est pair. On a en plus $Z_u=\hat q_u+\hat g_u$. Comme pr\'ec\'edemment,  que $\ell$ soit tel que $x_{k,\ell}=h$ ou $x_{k,\ell}=q_k$, on  a $
R(\hat h+\sum b_j\hat q_j)(x_{k,\ell})=1$ d'o\`u
$$r_k=\sum_{\ell=1}^{z_k}(z_k-\ell+1)
={z_k-2\over 2}={-b_k-1\over 2}={b_k+1\over 2}={a_k+b_k\over 2}={1+a_kb_k\over 2}.$$
 Les  \'egalit\'es sont calcul\'ees modulo 2.
\end{itemize}

Maintenant, on rappelle que dans le Cas 1, on a $\beta=[\hat \delta]=[\hat \mu_k]=\beta_k$. Le coefficient de $\beta$  est $(\sum r_k)-x= \sum_{0\le k\le r-1}{a_kb_k\over 2}+ {\sum_{m-1\ge k\ge r}(a_kb_k)\over 2}={1\over 2}\sum a_kb_k={c\over 2}.$

 Remarquons que ces calculs sont valables pour tous les Types.

\noindent $\bullet$ Calcul des coefficients $y_j$\\
On rappelle que
$$\begin{array}{rl} 
y_j&=R(\hat h+\sum b_j\hat q_j)(\nu_{j,1})_2R(\hat h+\sum b_j\hat q_j)(\nu_{j,1})_0\\
&-\varepsilon_jR(\hat h+\sum b_j\hat q_j)(\nu_{j,2})_2R(\hat h+\sum b_j\hat q_j)(\nu_{j,2})_0.
\end{array}.$$
 Pour tous les Types, on a $R(\hat h+\sum b_j\hat q_j)(\nu_{j,2})_0=0.$\\
Comme $R(\hat h+\sum b_j\hat q_j)((\nu_{i,1})_0)=R(\hat h+\sum b_j\hat q_j)(h)=1$, on a
 $y_j=R(\hat h+\sum b_j\hat q_j)(\nu_{j,1})_2$. Alors si $\varepsilon_j=1$, $y_j=R(\hat h+\sum b_j\hat q_j)(t_j)=0$, tandis que si $\varepsilon_j=-1$, $y_j=R(\hat h+\sum b_j\hat q_j)(f_j)=1$.
 
 {\sl Conclusion : $\alpha\cup\alpha$ n'existe que dans le Cas 1. Pour les Types $o_1$ et $n_1$, on a 
$\alpha\cup\alpha={c\over 2}\beta$. Pour les Types $o_2$ et $n_2$, on a $\alpha\cup\alpha={c\over 2}\beta+\sum_{j\ge 1}\varphi_j$. Pour le Type  $n_3$, on a $\alpha\cup\alpha={c\over 2}\beta+\sum_{j>1}\varphi_j$. Pour le Type $n_4$, on a $\alpha\cup\alpha={c\over 2}\beta+\sum_{j>2}\varphi_j$.}

\end{proof}

Pour les Types $o_1$ et $n_2$ ceci correspond bien au r\' esultat de \cite{bhzz1}, \cite{bhzz2}, \cite{bz}, puisque pour $a$ pair  
$\left(\begin{array}{c}a\\ 2\end{array}\right)$ est congru mod $2$ \`a $a/2$.

\subsection{ Les cup-produits, pour $p=2$, $\cup:H^1\tens H^2\to H^3$}\label{sub:12=3,2}

Dans cette section, nous utiliserons le proc\'ed\'e suivant. 

1) Pour $[\xi_1]$ un g\'en\'erateur du $H^1$ et $[\xi_2]$ un g\'en\'erateur du $H^2$, on choisit un repr\'esentant $\xi_1$ et $\xi_2$. Soient $R(\xi_1)$ et $R(\xi_2) $ les cocycles simpliciaux qui sont des sections de $T^t$ donn\'ees dans la section (\ref{section:relev}).

2) D'apr\`es la formule d'Alexander-Whitney, si $f$ est un 1-cocycle simpliciale,  $g$ un 2-cocycle simpliciale et $s$ un 3-simplexe de faces $s_0=(v_1,v_2,v_3)$, $s_1=(v_0,v_2,v_3)$, $s_2=(v_0,v_1,v_3)$ et $s_3=(v_0,v_1,v_2)$ alors $f\cup g(s)=f(v_0,v_1)g(s_0)$, et on trouve $(v_0,v_1)$ en prenant la derni\`ere ar\^ete de $s_2$ ou $s_3$, i.e. $(v_0,v_1)=(s_2)_2=(s_3)_2$. 

3) Quand la combinaison $C$ des 3-simplexes telle que $R(\xi_1)\cup R(\xi_2)= C $ a \'et\'e trouv\'ee, on obtient finalement $[\xi_1]\cup[\xi_2]= [T^t C].$
\begin{theo} Pour $p=2$, les seuls cup-produits $\cup:H^1\tens H^2\to H^3$ sont :\\
$\bullet$  Dans les trois Cas 
\begin{itemize}
 \item $\boldsymbol{\theta_i\cup\varphi_j}$,  \\
-   Pour les Types $o_i$,  les  $\theta_i\cup\varphi_j$ non nuls sont :\\
 si $j$ est impair $\theta_{j+1}\cup\varphi_j=\gamma$, si $j$ est pair $\theta_{j-1}\cup\varphi_j=\gamma.$\\
-  Pour les Types $n_i$,
  on a $\theta_j\cup\varphi_j=\gamma$ et 0 sinon.
  \end{itemize}
  Avec en plus,\\
  $\bullet$ dans le Cas 1
  \begin{itemize}
   \item  $\boldsymbol{\alpha\cup\varphi_j}$, \\
- Pour les Types $o_1$ et $n_1$, $\alpha\cup\varphi_j=0$, \\
- Pour les Types $o_2$ et $n_2$, $\alpha\cup\varphi_j=\gamma$,\\
- Pour le Type $n_3$, $\alpha\cup\varphi_j=\gamma $ si $j\neq 1$ et 0 sinon,\\  
- Pour le Type $n_4$, $\alpha\cup\varphi_j=\gamma $ si $j\neq 1, 2$ et 0 sinon,
\item $\boldsymbol{\theta_i\cup\beta}$, \\
- Pour tous les Types,  $\theta_i\cup\beta=0,$
\item  $\boldsymbol{\alpha\cup\beta }$, \\
- Pour tous les Types,  $\alpha\cup\beta=\gamma,$
\end{itemize}
 $\bullet$ dans le Cas 3
  \begin{itemize}
\item $\boldsymbol{\alpha_k\cup\varphi_j}$, \\
-  Pour tous les Types,  $\alpha_k\cup\varphi_j=0,$
\item $\boldsymbol{\alpha_k\cup\beta_k}$,  \\
- Pour tous les Types, $\alpha_k\cup\beta_k=\gamma$ et 0 sinon,
\item $\boldsymbol{\theta_i\cup\beta_k}$, \\
- Pour tous les Types,  $\theta_i\cup\beta_k=0.$
\end{itemize}
\end{theo}

\begin{proof}

{\bf Calcul de $\boldsymbol{ \theta_i\cup \varphi_j}$}

  Ce cup-produit intervient pour tous les Types, dans les trois Cas.\\
 $\theta_i=[\hat t_i]$ pour $1\le i\le g'$ et $\varphi_j=[\hat\nu_j]$ pour $1\le j\le g'$.

Le relev\'e de $\varphi_j$ contient $\hat\nu_{j,1}$. Les seuls 3-simplexes $s$ dont la  face $s_0$ est $\hat\nu_{j,1}$ sont uniquement $s=N_{j,1}$ ou $s=N'_{j,1}$. Pour les diff\'erents Types, les d\'ecoupages simpliciaux de $ N_{j,1}$  et $N'_{j,1}$ sont diff\'erents.\\
 Le relev\'e de $\theta_i$ est \\
$R(\hat t_i)=\hat t_i+\hat f_i+\hat e_\ell +\hat S^\pm_\ell+\hat e_{\ell-1}+\hat S^\pm_{\ell-1},$\\
- si $i$ est impair ,  $\ell=2i$  ce qui donne ici  $\ell= 2$ modulo 4,\\
- si $i$ est pair $\ell=2i-1$ ce qui donne ici  $\ell= 3$ modulo 4. 

{\bf Type $o_1$, $\varepsilon_j=1$}, on trouve, en comparant les valeurs modulo 4 et les parit\'es :
\begin{itemize}
\item pour $j$ impair, $(N_{j,1})_3=T^-_{2j-2}$ et $(N'_{j,1})_3=T^-_{2j}$,\\
-  comme $2j-2=0$ modulo 4, on a  $((N_{j,1})_3)_2=(T^-_{2j-2})_2=S^-_{2j-2}$,
\begin{itemize}
\item si $i$ est impair, $2i$ et $2i-1$   ne sont pas \'egaux modulo 4 \`a $2j-2$, on a $R(\hat t_i)((N_{j,1})_3)_2=0,$
\item si $i$ est pair, $2i-1$ et $2i-2$   ne sont pas \'egaux modulo 4 \`a $2j-2$, on a  $R(\hat t_i)((N_{j,1})_3)_2=0,$
\end{itemize}
- comme $2j=2$ modulo 4, on a  $((N'_{j,1})_3)_2=(T^-_{2j})_2=S^-_{2j+1}$,
\begin{itemize}
\item si $i$ est impair, comme $2j+1$ et $2i$ ne sont pas de la m\^eme parit\'e, on a $R(\hat t_i)((N'_{j,1})_3)_2=0,$
\item si $i$ est pair et $i=j+1$ alors $R(\hat t_{j+1})((N'_{j,1})_3)_2=1,$
\end{itemize}

\item pour $j$ pair, $(N_{j,1})_3=T^-_{2j-3}$ et $(N'_{j,1})_3=T^-_{2j-1}$,\\
 - comme $2j-3=1$ modulo 4, on a  $((N_{j,1})_3)_2=(T^-_{2j-3})_2=S^-_{2j-3}$,
\begin{itemize}
\item si $i$ est impair et $i=j-1$,  on a $R(\hat t_{j-1})((N_{j,1})_3)_2=1,$
\item si $i$ est pair,  $2i-1$ et $2i-2$   ne sont pas \'egaux modulo 4 \`a $2j-3$,  on a $R(\hat t_i)((N_{j,1})_3)_2=0,$
\end{itemize}
- comme $2j=2$ modulo 4, on a  $((N'_{j,1})_3)_2=(T^-_{2j-1})_2=S^-_{2j}$, 
\begin{itemize}
\item si $i$ est impair,  $2i$ et $2i-1$   ne sont pas \'egaux modulo 4 \`a $2j$, on a $R(\hat t_i)((N'_{j,1})_3)_2=0,$
\item si $i$ est pair, comme $2j$ et $2i-1$ ne sont pas de la m\^eme parit\'e, on a $R(\hat t_i)((N'_{j,1})_3)_2=0.$
\end{itemize}
\end{itemize}

De plus on a $T^t[N_{j,1}]=T^t[N'_{j,1}]=\epsilon$, d'o\`u  la conclusion :\\
{\sl Conclusion : Dans tous les Cas, pour le Type $o_1$, les seuls $\theta_i\cup\varphi_j$ non nuls sont :\\
- si $j$ est impair $\theta_{j+1}\cup\varphi_j=\gamma$,\\
- si $j$ est pair $\theta_{j-1}\cup\varphi_j=\gamma.$}

\vspace{0.5cm}

{\bf Type $o_2$, $\varepsilon_j=-1$}, on trouve
\begin{itemize}
\item  pour $j$ impair, $ (N_{j,1})_2= T^-_{2j-2}$  et $ (N'_{j,1})_2= T^+_{2j}$, \\
- comme $2j-2$ est \'egale \`a 0 modulo 4, on a $((N_{j,1})_3)_2=(T^-_{2j-2})_2=S^-_{2j-2}$,\\
- comme $2j$ est \'egale \`a 2 modulo 4, on a $((N'_{j,1})_3)_2=(T^+_{2j})_2=S^+_{2j+1}$, 
\item pour $j$ pair, $ (N_{j,1})_2= T^+_{2j-3}$  et $ (N'_{j,1})_2= T^-_{2j-1}$, \\
- comme $2j-3$ est \'egale \`a 1 modulo 4, on a $((N_{j,1})_3)_2=(T^+_{2j-3})_2=S^+_{2j-3}$,
- comme $2j-1$ est \'egale \`a 3 modulo 4, on a $((N'_{j,1})_3)_2=(T^-_{2j-1})_2=S^-_{2j}$,
 \end{itemize}

Peu importe qu'on applique $R(\hat t_i)$ \`a un $S^+_{..}$ ou \`a un $S^+_{..}$,  on obtient les m\^emes conditions sur les indices que pour le Type $o_1$.
On a encore $T^t[N_{j,1}]=T^t[N'_{j,1}]=\epsilon$, d'o\`u  la m\^eme conclusion que pour le Type $o_1$ :\\ 
{\sl Conclusion : Dans tous les Cas, pour le Type $o_2$, les seuls $\theta_i\cup\varphi_j$ non nuls sont :\\
- si $j$ est impair $\theta_{j+1}\cup\varphi_j=\gamma$,\\
- si $j$ est pair $\theta_{j-1}\cup\varphi_j=\gamma.$}

\vspace{0.5cm}

{\bf Type $n_1$, $\varepsilon_j=1$}, pour tous $j$, on a $(N_{j,1})_3=T^-_{2j-2}$ et $(N'_{j,1})_3=T^-_{2j-1}$.
Comme plus haut, en comparant les valeurs modulo 4 et les parit\'es, on trouve :
\begin{itemize}
\item pour $j$ impair, \\
- $((N_{j,1})_3)_2=(T^-_{2j-2})_2=S^-_{2j-2}$,
\begin{itemize}
\item pour $i$ impair,  on a $R(\hat t_i)((N_{j,1})_3)_2=0,$
\item pour $i$ pair, on a $R(\hat t_i)((N_{j,1})_3)_2=0,$
\end{itemize}
- $((N'_{j,1})_3)_2=(T^-_{2j-1})_2=S^-_{2j-1}$, 
\begin{itemize}
\item pour $i$ impair et $i=j$, on a $R(\hat t_j)((N'_{j,1})_3)_2=1,$
\item pour $i$ pair, on a $R(\hat t_i)((N'_{j,1})_3)_2=0,$
\end{itemize}
\item pour $j$ pair, \\
- $((N_{j,1})_3)_2=(T^-_{2j-2})_2=S^-_{2j-1}$,
\begin{itemize}
\item pour $i$ impair,  on a $R(\hat t_i)((N_{j,1})_3)_2=0,$
\item pour $i$ pair, on a $R(\hat t_i)((N_{j,1})_3)_2=0,$
\end{itemize}
- $((N'_{j,1})_3)_2=(T^-_{2j-1})_2=S^-_{2j}$, 
\begin{itemize}
\item pour $i$ impair et $i=j$, on a $R(\hat t_j)((N'_{j,1})_3)_2=0,$
\item pour $i$ pair, on a $R(\hat t_i)((N'_{j,1})_3)_2=1.$
\end{itemize}
\end{itemize}
{\sl Conclusion : Dans les trois Cas,  pour les Types $n_1$,  on a $\theta_j\cup\varphi_j=\gamma$ et 0 sinon.}

\vspace{0.5cm}

{\bf Type $n_2$, $\varepsilon_j=-1$} pour tous $j$, on a $((N_{j,1})_2)_2=(T^-_{2j-2})_2=S^-_{2j-2}$ et $((N'_{j,1})_2)=(T^+_{2j-1})_2=S^+_{2j-1}$.
Comme pour le Type $n_1$, en comparant les valeurs modulo 4 et les parit\'es, on trouve  la m\^eme conclusion puisque le signe $\pm$ de l'ar\^ete $S^\pm_{..}$ ne change rien au calcul.\\
{\sl Conclusion : Dans les trois Cas,  pour les Types $n_2$,  $\theta_j\cup\varphi_j=\gamma$ et 0 sinon.}

\vspace{1cm}

{\bf Type $n_3$, $n_4$} en utilisant les r\'esultats pr\'ec\'edents pour $\varepsilon_j=1$ et $\varepsilon_j=-1$,  on a  la conclusion :\\
{\sl Conclusion : Dans les trois Cas,  pour les Types $n_3$ et $n_4$,  on a $\theta_j\cup\varphi_j=\gamma$ et 0 sinon.}

\vspace{1cm}

{\bf Calcul de $\boldsymbol{ \alpha\cup\varphi_j}$}

Ce cup-produit intervient pour tous les Types mais seulement  dans le Cas 1.\\
$\alpha=[\hat h+\sum b_k\hat q_k]$ et $\varphi_j=[\hat \nu_j]$.\\
D'apr\`es l'\'etude pr\'ec\'edente, on sait d\'ej\`a que le relev\'e de $\varphi_j$ contient $\hat\nu_{j,1}$. Les seuls 3-simplexes $s$ dont la  face $s_0$ est $\hat\nu_{j,1}$ sont uniquement $s=N_{j,1}$ ou $s=N'_{j,1}$. On sait aussi que les ar\^etes $(s_3)_2$ ou $(s_2)_2$ sont $S^\pm_u$ avec 
$1\le u\le g'.$ \\
Dans le relev\'e de $\alpha$ n'interviennent que $S^+_u$. L'\'etude pr\'ec\'edente m\`ene \`a la conclusion :\\
{\sl Conclusion : Losque $\varepsilon_j=1$, $\alpha\cup\varphi_j=0$ et lorsque $\varepsilon_j=-1$, $\alpha\cup\varphi_j=\gamma.$}

\vspace{1cm}

{\bf Calcul de $\boldsymbol{ \alpha_k\cup\varphi_j}$ } 

Ce cup-produit intervient  pour tous les Types mais seulement dans le Cas 3.\\
$\alpha_k=[\hat q_k-\hat q_0], 1\leq k\leq n-1$ et $\varphi_j=[\hat \nu_j], 1\leq j\leq 2g.$\\
On sait d\'ej\`a que $R(\hat\nu_j)(s_0)$ est non nul pour $s=N_{j,1}$ et $s=N'_{j,1}$. On sait aussi que les ar\^etes $(s_3)_2$ ou $5s_2)_2$ sont $S^\pm_u$ avec 
$1\le u\le g'.$ Ces ar\^etes n'interviennent pas
dans le relev\'e de $\alpha_k$. On a donc la conclusion: \\
{\sl Conclusion : Pour tous les Types, dans le Cas 3, $\alpha_k\cup\varphi_j=0.$}

\vfill\eject

{\bf Calcul de $\boldsymbol{ \theta_i\cup\beta}$} 

Ce cup-produit intervient pour tous les  Types mais seulement  dans le Cas 1.\\
 On a $\theta_i=[\hat t_i] $ et $\beta=[\hat \delta]$.

On cherche un 3-simplexe $s$ tel que $R(\hat\delta)(s_0)\neq 0$ sachant que $R(\hat\delta)=\hat\delta_0+\hat T^{\pm}_0+\hat F_0.$ Le seul possible 3-simplexe est $s=D^\pm_0$ pour lequel $(s_3)_2=(E^\pm_0)_2= A^\pm$. Comme l'ar\^ete $A^\pm$ n'intervient pas dans le relev\'e de $\theta_i$, on a:\\
{\sl Conclusion : Dans le Cas 1, pour tous les Types, on a $\theta_i\cup\beta=0.$}

\vspace{1cm}

{\bf Calcul de $\boldsymbol{ \alpha\cup\beta}$}  

Ce cup-produit intervient pour tous les  Types mais seulement  dans le Cas 1.

Maintenant, pour tous les Types, dans le relev\'e de $\alpha$, il y a l'ar\^ete $A^+$. Comme $T^t(D^+_0)=\epsilon$, on a la conclusion:\\
{\sl Conclusion : Dans le Cas 1, pour tous les Types, on a toujours $\alpha\cup\beta=\gamma.$}

\vspace{1cm}

{\bf Calcul de $\boldsymbol{ \alpha_i\cup\beta_k}$ } 

Ce cup-produit intervient pour tous les  Types mais seulement dans le Cas 3.\\

 Le relev\'e de $\beta_k$ est $R(\hat\mu_k)=\hat\mu_{k,1}+\hat X_{k,1}+\hat G_k-\sum_{1\leq \ell\leq z_k-w_{k+1}}\hat P_{k,\ell}.$ Le seul 3-simplexe  $s$ tel que $R(\hat\mu_k)(s_0)=1$ est 
$s=M_{k,1}^\pm$. On a   $s_3=P^\pm_{k,1}$ et $(P^\pm_{k,1})_2=C_k^\pm$. Dans le relev\' e de $\alpha_i$ appara\^\i t seulement (via $Z_i$) $C_i^+$.

On v\'erifie que $T^t(M_{k,1}^\pm)=\zeta_k$ et on a $\gamma=[\hat\zeta_k]$.\\
{\sl Conclusion : Dans le Cas 3, pour tous les Types, $\alpha_k\cup\beta_k=\gamma$ et 0 sinon.
}

\vspace{1cm}

{\bf Calcul de $\boldsymbol{ \theta_i\cup\beta_k}$ } 

Ce cup-produit intervient pour tous les  Types mais seulement dans le Cas 3.\\

 Le relev\'e de $\beta_k$ est $R(\hat\mu_k)=\hat\mu_{k,1}+\hat X_{k,1}+\hat G_k-\sum_{1\leq \ell\leq z_k-w_{k+1}}\hat P_{k,\ell}.$ Le seul 3-simplexe  $s$ tel que $R(\hat\mu_k)(s_0)=1$ est 
$s=M_{k,1}^\pm$. On a   $s_3=P^\pm_{k,1}$ et $(P^\pm_{k,1})_2=C_k^\pm$. Le relev\' e de $\theta_i$ appliqu\'e \`a $(s_3)_2$ est nul.\\
{\sl Conclusion : Dans le Cas 3, pour tous les Types, $\theta_i\cup\beta_k=0.$
}
\end{proof}

\section{Calcul des cup-produits pour $p>2$}\label{section:p>2}

\bigskip

Ce calcul  est \`a la fois plus compliqu\' e ($1$ et $-1$ ne sont plus \' egaux, et les g\' en\' erateurs diff\`erent selon les Types) et plus simple (la plupart des cup-produits seront nuls).

\subsection{ Les cup-produits, pour $p>2$, $\cup:H^1\tens H^1\to H^2$}\label{sub:11=2,p}

On utilise la même méthode des coefficients en calculant maintenant modulo $p$.

\begin{theo} Pour $p>2$, \\
$\bullet$  Dans le Cas 1, les seuls  cup-produits $\cup:H^1\tens H^1\to H^2$ sont
\begin{itemize}
\item $\boldsymbol{ \theta_i\cup\theta_j}$\\ 
- Pour les Types $o_i$, les cup-produits $\theta_i\cup\theta_j$ sont nuls sauf\\
  $\theta_{2i-1}\cup\theta_{2i}=\beta,$\\
- Pour les Types $n_i$,  les cup-produits $\theta_i\cup\theta_j$ sont nuls,
\item  $\boldsymbol{ \theta_j\cup\alpha}$\\
- Pour  le Type $o_1$,  on a 
 $\theta_j\cup\alpha=\varphi_j$,\\
  - Pour  le Type $n_1$,  on a 
 $\theta_j\cup\alpha=\varphi_j, j>1$,\\
 \item $\boldsymbol{ \alpha\cup\alpha},$ \\
- Pour les Types $o_1$ et $n_1$, les cup-produits
$\alpha\cup\alpha$ sont nuls,\\
\end{itemize}
$\bullet$  Dans le Cas 2, les seuls cup-produits $\cup:H^1\tens H^1\to H^2$ sont 
\begin{itemize}
\item $\boldsymbol{ \theta_i\cup\theta_j}$,\\
- Pour  le Type $o_2$,  on a 
$\theta_{2i-1}\cup\theta_{2i}=\beta,$\\
- Pour tous les autres Types, $\theta_i\cup\theta_j=0,$
\item $\boldsymbol{\theta_i\cup\theta_j}$\\
- Pour le Type $n_1$, on a
 $\theta_j\cup\alpha=\varphi_j, j>1$,\\

\end{itemize}
$\bullet$  Dans le Cas 3, les seuls  cup-produits $\cup:H^1\tens H^1\to H^2$ sont
\begin{itemize}
\item $\boldsymbol{ \theta_i\cup\theta_j}$,\\
- Pour tous les Types, on a $\theta_i\cup\theta_j=0,$
\item $\boldsymbol{ \theta_j\cup\alpha_k}$,\\
- Pour tous les Types, on a 
 $\theta_j\cup\alpha_k=0$,
\item $\boldsymbol{ \alpha_k\cup \alpha_i}$,\\
- Pour tous les Types , on a
$\alpha_k\cup \alpha_i=0$ sont nuls,\\

\end{itemize}
\end{theo}

\begin{proof}

{\bf Calcul de $\boldsymbol{ \theta_i\cup\theta_j}$} 

{\bf - Dans le Cas 1} \\
{\bf Type $o_1, o_2$},  $\theta_j=[\hat t_j]$,\\
 $\bullet$ Comme les relev\'es sont les m\^emes que   pour $p=2$, les coefficients $r_k$ et $y_\ell$ sont encore nuls.\\
$\bullet$ Il y a seulement \`a calculer $x$ mais en prenant garde aux signes dans $T(\delta)$~:

 $T(\delta)=\sum_{i=0}^{g-1}(\delta_{4i}+\delta_{4i+1}-\delta_{4i+2}-\delta_{4i+3})+\sum_{\ell=4g}^{4g+m}\delta_\ell$ donne alors $x=1$ si $i$ impair et $j=i+1$, mais $x=-1$ si $i$ pair et $j=i-1$. \\
 {\sl Conclusion : Dans le Cas 1, pour les Types $o_1, o_2$, les cup-produits $\theta_i\cup\theta_j$ sont nuls sauf  $\theta_{2k-1}\cup\theta_{2k}=\beta, k>0$.} 
\vspace{0.5cm}

{\bf Type $n_i$, } $\theta_j=[\hat t_j-\hat t_1]$.\\
On a $R(\hat t_j-\hat t_1)= \hat t_j+\hat f_j-(\hat t_1+\hat f_1)-\sum \left(\hat S^\pm_\ell+\hat e_\ell\right).$\\
$\bullet$  Les coefficients $r_i$ et $y_\ell$ sont encore nuls.\\
$\bullet$ On ne calcule pas $x$ car $\beta$ n'est pas un g\'en\'erateur du $H^2$.\\
{\sl Conclusion : Dans le Cas 1, pour les Types $n_i$, les cup-produits $\theta_i\cup\theta_j$ sont tous nuls.} 
 \vspace{0.5cm} 
 
{\bf - Dans le Cas 2} \\
{\bf Type $o_1$, } $\theta_j=[\hat t_j]$.\\
 $\bullet$ Comme les relev\'es sont les m\^emes que   pour $p=2$, les coefficients $r_k$ et $y_\ell$ sont encore nuls.\\
 $\bullet$ On ne calcule pas $x$ car $\beta$ n'est pas un g\'en\'erateur du $H^2$.\\
 {\sl Conclusion : Dans le Cas 2, pour les Types $o_1$, les cup-produits $\theta_i\cup\theta_j$ sont tous nuls.} 
 \vspace{0.5cm} 
 
 {\bf Type $o_2$, } $\theta_j=[\hat t_j]$.\\
 $\bullet$ Comme les relev\'es sont les m\^emes que   pour $p=2$, les coefficients $r_k$ et $y_\ell$ sont encore nuls.\\ 
$\bullet$ Il y a seulement \`a calculer $x$ mais en prenant garde aux signes dans $T(\delta)$, comme dans le Cas 1 pour les Types $o_i$.\\
{\sl Conclusion : Dans le Cas 2, pour les Types $ o_2$, les cup-produits $\theta_i\cup\theta_j$ sont nuls sauf  $\theta_{2k-1}\cup\theta_{2k}=\beta, k>0$.} 
 \vspace{0.5cm}

{\bf Type $n_i$.} La situation est la m\^eme que dans le Cas 1.\\
{\sl Conclusion : Dans le Cas 2, pour les Types $n_i$, les cup-produits $\theta_i\cup\theta_j$ sont tous nuls.} 

\vspace{0.5cm}

{\bf - Dans le Cas 3} \\
{\bf Type $o_i,n_i$}\\
 $\bullet$ Comme les relev\'es sont les m\^emes que   pour $p=2$, les coefficients $r_k$ et $y_\ell$ sont encore nuls.\\
 $\bullet$ On ne calcule pas $x$ car $\beta$ n'est pas un g\'en\'erateur du $H^2$.\\
 {\sl Conclusion : Dans le Cas 3, pour les Types $o_i,n_i$, les cup-produits $\theta_i\cup\theta_j$ sont tous nuls.}

\vfill\eject 

{\bf Calcul de $\boldsymbol{ \theta_j\cup\alpha}$} 

\vspace{0.5cm}  

Ce cup-produit n'intervient que dans le Cas 1 pour le Type $o_1$  et dans les Cas 1 et 2 pour le Type $n_1$ .\\

{\bf - Dans le Cas 1}\\
{\bf Type $o_1$,} $\theta_j=[\hat t_j]$ et $\alpha= [\hat h-\sum b_ka_k^{-1}\hat q_k]$.\\
$\bullet$ Calcul du coefficient $x$.\\
Dans $T(\delta)$, l'indice des simplexes $\delta_u$ varie de $0$ \`a $4g+m$. Pour $u\leq 4g-1$, le simplexe $\delta_u$ est $\delta_u=(t_\cdot,e_\cdot,e_\cdot)$ et $R(\hat h-\sum b_ka_k^{-1}\hat q_k)$ le relev\'e de $\alpha$ appliqu\'e \`a $(\delta_u)_0=t_.$ est nul. Lorsque $u\geq 4g$, le simplexe $\delta_u$ est $\delta_u=(q_.,e_.,e_.)$ et $R(\hat t_j)$ le relev\'e de $\theta_j$ appliqu\'e \`a $(\delta_u)_2=e_.$ est nul. Le coefficient $x$ est nul.\\
 $\bullet$ Calcul du coefficient $y_j$.\\
 Pour le Type $o_1$, on a $T(\nu_j)=\nu_{j,1}+\nu_{j,2}$, puisque tous les $\varepsilon_j=1$ et $\nu_{j,1}=(h,f_j,t_j),\nu_{j,2}=(t_j,f_j,h)$. On voit que seulement
 $$R(\hat h-\sum b_ka_k^{-1}\hat q_k)((\nu_{j,1})_0)=1, R(\hat t_j)((\nu_{j,1})_2)=1.$$
 {\sl Conclusion.  Les cup-produits $\theta_j\cup\alpha$ sont nuls sauf pour le Type $o_1$, dans le Cas 1 et alors $\theta_j\cup\alpha=\varphi_j$.}
 
 \vspace{0.5cm}
 
{\bf Type $n_1$}, $\theta_j=[\hat t_j-\hat t_1]$ et $\alpha= [\hat h-\sum b_ka_k^{-1}\hat q_k]$.\\
  Dans le relev\'e de $\theta_j$ intervient $t_1+f_1$ mais ce terme ne donne pas de contribution car   il suffit de calculer $y_j$ pour $j>1$ puisque $\varphi_1$ n'est pas un g\'en\'erateur de $H^2$. Alors la conclusion est la m\^eme que pour le Type $o_1$ avec une restriction sur l'indice $j$.\\
 {\sl Conclusion.  Dans les Cas 1,  pour le Type $n_1$, $\theta_j\cup\alpha=\varphi_j, j>1$.}
 
  \vspace{0.5cm}
  
{\bf -Dans le Cas 2}\\
{\bf Type $n_1$,} $\theta_j=[\hat t_j-\hat t_1]$ et $\alpha= [\hat h-\sum b_ka_k^{-1}\hat q_k]$.\\
La conclusion est la m\^eme que dans le Cas 1.\\
 {\sl Conclusion.  Dans les Cas 2,  pour le Type $n_1$, $\theta_j\cup\alpha=\varphi_j, j>1$.}
 
\vspace{1cm}
 
 {\bf Calcul de $\boldsymbol{ \alpha\cup\alpha}$} 
 
 Ce cup-produit n'intervient que dans le Cas 1 pour le Type $o_1$  et dans les Cas 1 et 2 pour le Type $n_1$.
 
 {\bf - Dans le Cas 1}\\
{\bf Type $o_1$, } $\alpha= [\hat h-\sum b_ka_k^{-1}\hat q_k]$.\\
Le relev\'e de $\alpha$ est $R(\hat h-\sum b_ka_k^{-1}\hat q_k)=\hat h+\sum\hat f_j+\sum\hat g_k+\hat A^++\sum\hat S_\ell^+-\sum b_ka_k^{-1}Z_k-\sum V_k-{1\over a}[c_0(\hat e_{*+1}+\hat S_{*+1}^\pm)+(c_0+c_1)(\hat e_{*+2}+\hat S_{*+2}^\pm)+\ldots+(c_0+\ldots+c_{m-1})(\hat e_{*+m}+\hat S_{*+m}^\pm)]$.\\
$\bullet$ Calcul du coefficient $x$.\\
Comme $R(\hat h-\sum b_ka_k^{-1}\hat q_k)(t_j)=0$, on a 
$$x=\sum R(\hat h-\sum b_ka_k^{-1}\hat q_k)(e_{*+k})R(\hat h-\sum b_ka_k^{-1}\hat q_k)(q_k).$$
$\bullet$ Calcul des coefficients $r_k$.\\
\begin{itemize}
\item Si $b_k>0$, 
$$\begin{array}{rl}
r_k=&\sum R(\hat h-\sum b_ka_k^{-1}\hat q_k)(p_{k,\ell})R(\hat h-\sum b_ka_k^{-1}\hat q_k)(x_{k,\ell})\\
=&\sum_{x_{k,\ell}=q_k}(-b_k/a_k)[(b_k/a_k)\sharp\{i\geq\ell\tq x_{k,i}=q_k\}-\sharp\{i\geq\ell\tq x_{k,i}=h\}]\\
&+\sum_{x_{k,\ell}=h}[(b_k/a_k)\sharp\{i\geq\ell\tq x_{k,i}=q_k\}-\sharp\{i\geq\ell\tq x_{k,i}=h\}\\
=&{-1\over a_k^2}\sum_{i\geq\ell}s_is_\ell,
\end{array}$$
 avec $s_i=b_k$ si $x_{k,i}=q_k$ et $s_i=-a_k$ si $x_{k,i}=h$. Cet {\sl entier} $\sum_{i\geq\ell}s_is_\ell$ est \' egal \`a :\
  $\sum_{i\geq\ell}s_is_\ell={1\over 2}[\sum(s_\ell^2)-(\sum s_\ell)^2]={1\over 2}(a_kb_k^2+b_ka_k^2-0)={a_kb_k(a_k+b_k)\over 2}$,\\
   donc $r_k=-{b_k(a_k+b_k)\over 2a_k}.$
\item Si $b_k\leq 0$,\\
 $$\begin{array}{rl} r_k=&R(\hat h-\sum b_ka_k^{-1}\hat q_k)(q_k)\alpha(p_{k,1})-R(\hat h-\sum b_ka_k^{-1}\hat q_k(p_{k,3}+\\
 &\ldots+p_{k,1})R(\hat h-\sum b_ka_k^{-1}\hat q_k)(h)\\
 =&-\sum_{\ell>2}\alpha(p_{k,\ell})=-\sum_{\ell>2}(z_k-\ell+1)=-{z_k-2)(z_k-1)\over 2}, 
 \end{array}$$
 or $z_k=1-b_k$ et $a_k=1$ d'o\`u \\ 
 $r_k=-{b_k(a_k+b_k)\over 2a_k}$, comme dans le cas $b_k>0$.
\end{itemize}
$\bullet$ Calcul des coefficients $y_\ell$.\\
On a \\
\begin{tiny} $y_j=R(\hat h-\sum b_ka_k^{-1}\hat q_k)(t_j)R(\hat h-\sum b_ka_k^{-1}\hat q_k)(h)-R(\hat h-\sum b_ka_k^{-1}\hat q_k)(h)R(\hat h-\sum b_ka_k^{-1}\hat q_k)(t_j)=0$.\end{tiny}\\
Il reste donc $\alpha\cup\alpha=[x\hat\delta+\sum r_k\hat\mu_k]=N[\hat\delta]$, avec
$$N=x-\sum{r_k\over a_k}=\sum_{k=0}^m{b_k\over a_k}(\sum_{i<k}{b_i\over a_i})+\sum{b_k(a_k+b_k)\over 2a_k^2}={c(a+c)\over 2}a^{-2}. $$ Comme $c$ est divisible par $p>2$, $N$ est congru mod $p$ \`a $0$.\\
{\sl Conclusion : Dans le Cas 1, pour le Type $o_1$, $\alpha\cup\alpha=0$.}

\vspace {0.5cm}

{\bf Type $n_1$,} $\alpha=  [\frac{c}{2}\hat t_1]+\hat h-\sum b_ka_k^{-1}\hat q_k]$.\\
Le relev\'e de $\alpha$ est $R(\hat h-\sum b_ka_k^{-1}\hat q_k)=\hat h+\sum\hat f_j+\sum\hat g_k+\hat A^++\sum\hat S_\ell^+-\sum b_ka_k^{-1}Z_k-\sum V_k-{1\over a}[c_0(\hat e_{*+1}+\hat S_{*+1}^\pm)+(c_0+c_1)(\hat e_{*+2}+\hat S_{*+2}^\pm)+\ldots+(c_0+\ldots+c_{m-1})(\hat e_{*+m}+\hat S_{*+m}^\pm)]+ \frac{c}{2}a\left((\hat t_1+\hat f_1)-(\hat S^\pm_1+\hat e_1)-2(\hat S^\pm_0+\hat e_0)\right)$.\\
Comme $p$ divise $c$, le nouveau dernier facteur n'intervient pas dans le calcul des coefficients. De plus pour ce Type, on a $[\hat\delta]=0$.\\
{\sl Conclusion : Dans le Cas 1, pour le Type $n_1$, $\alpha\cup\alpha=0$.}
\vspace {0.5cm}

 {\bf - Dans le Cas 2}\\
 Pour le Type $n_1$, la situation est la m\^eme que dans le Cas 1.\\
{\sl Conclusion : Dans le Cas 2, pour le Type $n_1$, $\alpha\cup\alpha=0$.}

\vspace{1cm}

{\bf Calcul de $\boldsymbol{ \theta_j\cup\alpha_k}$}

- Ce cup-produit n'existe que dans le Cas 3. \\
Dans ce Cas, pour tous les Types, les coefficients $r_k$ et $y_\ell$ sont nuls pour les m\^emes raisons que plus haut et $\beta$ n'est pas un g\'en\'erateur du $H^2$ donc il n'y a pas de coefficient $x$.\\
 {\sl Conclusion : Pour les Types, les cup-produits $\theta_j\cup\alpha_k$ n'existent que dans le  Cas  3 et ils   sont tous nuls. }
  
\vspace{1cm}

\vspace{1cm}  
{\bf Calcul de $\boldsymbol{ \alpha_k\cup\alpha_j}$} 

Ce cup-produit intervient dans le Cas 3 pour tous les Types. Le d\'eroulement de la preuve est la m\^eme que pour $p=2$. Il faut maintenant tenir compte des signes $\pm$ et les divisibilit\'es par $p$.

{\bf Type $o_i$} Le g\'en\'erateur $\alpha_k$ est alors $\alpha_k=[\hat q_k-\hat q_0]$ pour $1\le k\le n-1.$ Nous devons calculer les coefficients $r_\ell$ et $y_i$.

\noindent $\bullet$ Calcul des coefficients $r_\ell=\sum_w R(\hat q_k-\hat q_0)(\mu_{\ell,w})_2
R(\hat q_j-\hat q_0)(\mu_{\ell,w})_0,$ avec $(\mu_{\ell,w})_0=x_{\ell,w}$ et $(\mu_{\ell,w})_2=p_{\ell,w}$.\\ 
1) Dans $R(\hat q_j-\hat q_0)$ interviennent $Z_j$ et $-Z_0$ et dans $Z_u,$
 on voit $\hat q_u-\sum_{s\ge 1}\hat p_{u,s}\sharp\{t\geq s \mid x_{u,t}=q_u\}$.\\
 -Si $u=j$,   $R(\hat q_j-\hat q_0)(\mu_{\ell,w})_0=R(\hat q_j-\hat q_0)(x_{\ell,w})=0$ sauf si $\ell=j$ et ceci pour tous les $a_j$ indices $w$ tels que $x_{j,w}=q_j$. Dans ces situations $R(\hat q_j-\hat q_0)(\mu_{j,w})_0=1$. \\
 - Si $u=0$, $R(\hat q_j-\hat q_0)(\mu_{\ell,w})_0=0$ sauf si $\ell=0$ et ceci pour tous les $a_0$ indices $w$ tels que $x_{0,w}=q_0$. Dans ces situations $R(\hat q_j-\hat q_0)(\mu_{0,w})_0=-1$. \\
 2) $R(\hat q_k-\hat q_0)(\mu_{\ell,w})_2=R(\hat q_k-\hat q_0)(p_{\ell,w})=0$ sauf si \\
 i) $\ell=j=k$ et pour tous les $w$ tels que $x_{k,w}=q_k$. Dans ces situations on a $R(\hat q_k-\hat q_0)(p_{k,w})=-\sharp\{t\geq w \mid x_{k,t}=q_k\}$ ;\\
 ii) $\ell=0$ et pour tous les $w$ tels que $x_{0,w}=q_0$. Dans ces situations on a $R(\hat q_k-\hat q_0)(p_{0,w})=-\sharp\{t\geq w \mid x_{0,t}=q_0\}$.
 
 Si $k=j$, nous avons obtenu 
 $$r_k=-\sum_{\begin{array}{c}w\ge 1\\ x_{k,w}=q_k\end{array}}
 R(\hat q_k-\hat q_0)(\mu_{k,w})_2=-\sum_{w=1}^{w=a_k-1}(a_k-w)= -\frac{a_k(a_k-1)}{2}.$$
Comme  $p\mid a_k$, tous les $r_k$ sont nuls. (Remarquons que ceci n'est pas vrai quand $p=2$.)

\noindent $\bullet$ Calcul des coefficients
$$y_i=R(\hat q_k-\hat q_0)(\nu_{i,1})_2R(\hat q_j-\hat q_0)(\nu_{i,1})_0-\varepsilon_i R(\hat q_k-\hat q_0)(\nu_{i,2})_2R(\hat q_j-\hat q_0)(\nu_{i,2})_0. $$
Nous avons\\
 $(\nu_{i,1})_2= t_i$ si $\varepsilon_i=1$ et $(\nu_{i,1})_2= f_i$ si $\varepsilon_i=-1$ ;\\
$(\nu_{i,1})_0=(\nu_{i,2})_2=h;  (\nu_{i,2})_0=t_i.$\\
 Aucun de ces \'el\'ements n'intervient dans $R(\hat q_u-\hat q_0)$. On a donc que pour tout $i, y_i=0.$

- Il reste \`a remarquer que les calculs pr\'ec\'edents sont valables pour les Types $o_1$ et $o_2$.\\
{\sl Conclusion. Pour les Types $o_i$, tous les cup-produits $\alpha_k\cup\alpha_j$ sont nuls.}

{\bf Pour les Types $n_i$.} Maintenant le g\'en\'erateur  est $\alpha=[ \hat q_k-\frac{1}{2}\hat t_g].$

\noindent $\bullet$ Calcul des $r_k$
La diff\'erence avec le paragraphe pr\'ec\'edent est que le terme $Z_0$ n'intervient pas.  Les calculs des $r_k$ restent les m\^emes et les $r_k$ sont nuls puisque $p\mid a_k$.

\noindent $\bullet$Dans les $R(\hat q_k-\frac{1}{2}\hat t_g)$ il y a maintenant $\frac{1}{2}t_g$ mais $R(\hat q_j-\frac{1}{2}\hat t_g)(\nu_{i,1})_0=R(\hat q_j-\frac{1}{2}\hat t_g)(h)=0$ et 
$R(\hat q_k-\frac{1}{2}\hat t_g)(\nu_{i,1})_2=R(\hat q_k-\frac{1}{2}\hat t_g)(h)=0.$ Ici aussi tous les $y_i$ sont nuls.\\
{\sl Conclusion. Pour les Types $n_i$, tous les cup-produits $\alpha_k\cup\alpha_j$ sont nuls.}

\end{proof}
 
 

\subsection{ Les cup-produits, pour $p>2$, $\cup:H^1\tens H^2\to H^3$}\label{sub:12=3,p}

Il suffit de consid\' erer les Types $o_1$ et $n_2$, car dans le cas non orientable, le $H^3$ \`a coefficients dans un anneau $A$ vaut $A/2A$  donc il est nul si $A=\Z/p\Z$.

On rappelle que si $f$ est un 1-cocycle et $g$ un 2-cocycle, et $s$ un 3-simplexe de faces $s_0=(v_1,v_2,v_3)$, $s_1=(v_0,v_2,v_3)$, $s_2=(v_0,v_1,v_3)$ et $s_3=(v_0,v_1,v_2)$ alors $f\cup g(s)=f(v_0,v_1)g(s_0)$, et on trouve $(v_0,v_1)$ en prenant la derni\`ere ar\^ete de $s_2$ ou $s_3$, i.e. $(v_0,v_1)=(s_2)_2=(s_3)_2$.

Mais aussi, si $g$ est un 2-cocycle, $f$ un 1-cocycle et $s=(s_0,s_1,s_2,s_3)$ un 3-simplexe de faces $s_0=(v_1,v_2,v_3)$, $s_2=(v_0,v_2,v_3)$ et $s_3=(v_0,v_1,v_2)$, o\`u les $v_i$ sont les sommets, alors $g\cup f(s)=g(s_3)f(v_2,v_3)$. On trouve $(v_2,v_3)$ en prenant la premi\`ere ar\^ete de $s_1$ ou de $s_0$, i.e. $(v_2,v_3)=(s_1)_0=(s_0)_0.$

Dans ces dimensions de cocycles, on a $f\cup g= (-1)^{1\times 2}g\cup f= g\cup f.$

\begin{theo} Pour $p>2$, les seuls cup-produits $\cup:H^1\tens H^2\to H^3$ sont :\\
$\bullet$  Dans les trois Cas 
\begin{itemize}
\item  $\boldsymbol{\theta_i\cup\varphi_j}$,\\
- Pour le Type $o_1$, 
 on a pour $j$ impair $\theta_{j+1}\cup\varphi_j=-\gamma$ et pour $j$ pair $\theta_{j-1}\cup\varphi_j=\gamma,$\\
- Pour le Type $n_2$,  on a $\theta_j\cup\varphi_j=\gamma,$ et 0 sinon.
\end{itemize}
Avec en plus\\
$\bullet$ dans le Cas 1, pour tous les Types
\begin{itemize}
\item   $\boldsymbol{\theta_i\cup\beta=0;}$
\item  $\boldsymbol{\alpha\cup\beta=\gamma ;}$
\item $\boldsymbol{\alpha\cup\varphi_j=0}.$
\end{itemize}
$\bullet$ dans le Cas 3
\begin{itemize} 
 \item $\boldsymbol{\alpha_i\cup\beta_k}$,\\
- Pour les Types $o_1$ et $n_2$,  on a 
$\alpha_i\cup\beta_k=0$ sauf si $i=k$ et dans cette situation on a 
$\alpha_k\cup\beta_k=b_k^{-1}\gamma.$
\item $\boldsymbol{\alpha_k\cup\varphi_j}$,\\
-  Pour le Type $o_1$,  on a $\alpha_k\cup\varphi_j=0.$ \\
\item $\boldsymbol{\alpha_k\cup\varphi_j}$,\\
- Pour le Type $n_2$,  
 pour tous indices $k$, on a
 $\alpha_k\cup\varphi_g=-\frac{1}{2}\gamma$.  
\end{itemize}

\end{theo}


\begin{proof}

{\bf Calcul de $\boldsymbol{\theta_i\cup \varphi_j}$} 

Ces cup-produits interviennent  dans les trois Cas.

{\bf Type $o_1$} ,  $\theta_i=[\hat t_i], 1\le i\le 2g$ et $\varphi_j=[\hat \nu_j], 1\le j\le 2g$.\\
Comme pour $p=2$, on trouve $R(\hat t_i)\cup R(\hat\nu_j)= 0$ sauf \\
- si $j$ est impair, $R(\hat t_{j+1})\cup R(\hat\nu_j)=N'_{j,2}$\\
- si $j$ est pair $R(\hat t_{j-1})\cup R(\hat\nu_j) =N_{j,2}$.\\

Pour $p>2$, on a\\ 
- pour $j$ impair,  $T^t(N'_{j,2})=-\epsilon$ d'o\`u  $\theta_{j+1}\cup\varphi_j=-\gamma$\\
-  pour $j$ pair, $T^t(N'_{j-1,2})=\epsilon$ d'o\`u $\theta_{j-1}\cup\varphi_j=\gamma.$\\
{\sl Conclusion : Dans les trois Cas, pour le Type $o_1$ , on a\\
- pour $j$ impair $\theta_{j+1}\cup\varphi_j=-\gamma$\\
- pour $j$ pair $\theta_{j-1}\cup\varphi_j=\gamma.$}

{\bf Type $n_2$}\\
On a $\theta_i=[\hat t_i -\hat t_1], i>1,$ et $\varphi_j=[\hat\nu_j], j>1$. Pour le Type $n_2$, tous les $\varepsilon_j$ sont \'egaux \`a $-1$. Le relev\'e de $\theta_i$ n'est plus le m\^eme que pour $p=2$, mais on a encore $R(\hat t_i -\hat t_1)\cup R(\hat\nu_j)=0$ sauf si $i=j$.
De plus on a $T^t(N'_{j,2})=\epsilon$, d'o\`u la conclusion :\\
{\sl Conclusion : Dans les trois Cas,  pour le Type $n_2$,  on a $\theta_j\cup\varphi_j=\gamma,$ et 0 sinon.}

\vspace{1cm}

{\bf Calcul de $\boldsymbol{\theta_i\cup\beta}$, $\boldsymbol{\alpha\cup\beta}$, $\boldsymbol{\alpha\cup\varphi_j}$} 

Ces cup-produits n'interviennent que pour le Type $o_1$ dans le Cas 1.

La preuve est exactement la m\^eme que pour $p=2$, d'o\`u la conclusion\\
{\sl Conclusion : On a toujours $\theta_i\cup\beta=0,$ $\alpha\cup\beta=\gamma,$ $\alpha\cup\varphi_j=0.$}

\vspace{1cm}

{\bf Calcul de $\boldsymbol{\alpha_i\cup\beta_k}$ } 

Ces cup-produits n'interviennent que dans le Cas 3.
Les calculs suivants ne d\'ependent pas des Types. 

Comme pour $p=2$, le seul 3-simplexe  $s$ tel que $R(\hat\mu_k)(s_0)=1$ est 
$s=M_{k,1}^\pm$. On a   $s_3=P^\pm_{k,1}$ et $(P^\pm_{k,1})_2=C_k^\pm$. Dans le relev\' e de $\alpha_i$ appara\^\i t seulement (via $Z_k$) $C_k^+$, affect\' e du coefficient $-v_k$, si $i=k$.

Du fait que que $a_ku_k-b_kv_k=1$ et que $p$ divise $a_k$, on $-v_k=b_k^{-1}$ dans $\Z_p$.
On v\'erifie que $T^t(M_{k,1}^\pm)=\zeta_k$ et on a $\gamma=[\hat\zeta_k]$.\\
{\sl Conclusion : Dans le Cas 3, pour les Types $o_1$ et $n_2$,
$\alpha_i\cup\beta_k=0$ sauf si $i=k$ et dans cette situation on a 
$\alpha_k\cup\beta_k=b_k^{-1}\gamma.$}

\vspace{1cm}

{\bf Calcul de $\boldsymbol{\alpha_k\cup\varphi_j}$ } 

Ces cup-produits n'interviennent que dans le Cas 3.

{\bf Type $o_1$,} $\alpha_k=[\hat q_k-\hat q_{0}], 1\leq k\leq m$ et $\varphi_j=[\hat \nu_j], 1\leq j\leq 2g.$\\

La preuve est exactement la m\^eme que pour $p=2$, d'o\`u la conclusion \\
{\sl Conclusion : Dans le Cas 3, pour le Type $o_1$, tous les cup-produits $\alpha_k\cup\varphi_j$ sont nuls.}

\vspace{0.5cm}

{\bf Type $n_2$,} $\alpha_k=[\hat q_k-\frac{1}{2}\hat t_g], 0\leq k\leq m$ et $\varphi_j=[\hat \nu_j], 1< j\leq g.$\\
Comme pour $p=2$, 
les seuls 3-simplexes $s$ tels que $R(\hat\nu_j)s_0\neq 0$ sont $s=N_{j,1}$ et  $s=N'_{j,1}$ car on a $(N'_{j,1})_0=(N_{j,1})_0=\nu_{j,1}.$ On a $((N_{j,1})_3)_2=(F_{2j-2})_2=S^-_{2j-2}$ et  $((N'_{j,1})_3)_2=(F_{2j-1})_2=S^+_{2j-1}$. \\
- Le relev\'e de $\alpha_k$ est maintenant\\
$R(\hat q_k-\frac{1}{2}\hat t_g)=Z_k-\sum_{\ell=0}^k (\hat e_{2g+\ell}+\hat S_{2g+\ell}^\pm)-\frac{1}{2}(\hat e_{2g-1}+\hat S_{2g-1}^\pm)$. Par cons\'equent, on obtient \\
  $R(\hat q_k-\frac{1}{2}\hat t_g)((N_{j,1})_3)_2=0$, $ R(\hat q_k-\frac{1}{2}\hat t_g)((N'_{j,1}))_3)_2=-\frac{1}{2}$ lorsque $j=g$ d'o\`u
$R(\hat q_k-\frac{1}{2}\hat t_g)\cup R(\hat\nu_j)=  -\frac{1}{2}T^t(N'_{g,1})$. Comme $T^t(N'_{g,1})=\epsilon$, on a la conclusion.\\
{\sl Conclusion : Dans le Cas 3, pour le Type $n_2$, pour tous indices $k$, on a $\alpha_k\cup\varphi_j=-\frac{1}{2}\gamma$.  }

\end{proof}

\begin{rem}
Les quelques diff\' erences de signe avec les r\'esultats obtenus pr\'ec\'edemment ( voir par exemple \cite{bz}) s'expliquent par le fait que les  g\'en\'erateurs not\'es $\alpha$ sont de signe oppos\'e. De plus les $\beta_k$ qui apparaissent naturellement ici sont des  multiplies des g\'en\'erateurs   not\'es $b_k$ dans  \cite{bz}, ce qui modifiera certains produits par ces facteurs. Pour $n_2$ nous avons choisi (pour \' eviter de distinguer inutilement les cas $n=0$ et $n>0$) des g\' en\' erateurs du $H^1$ diff\' erents, mais ces perturbations sont tu\' ees dans les produits.
\end{rem}

\vfill\eject

\section{Figures}\label{section:fig}

\vspace{1cm}

\ifx\JPicScale\undefined\def\JPicScale{1}\fi
\psset{unit=\JPicScale mm}
\psset{linewidth=0.3,dotsep=1,hatchwidth=0.3,hatchsep=1.5,shadowsize=1,dimen=middle}
\psset{dotsize=0.7 2.5,dotscale=1 1,fillcolor=black}
\psset{arrowsize=1 2,arrowlength=1,arrowinset=0.25,tbarsize=0.7 5,bracketlength=0.15,rbracketlength=0.15}
\begin{pspicture}(30,0)(140,31)
\psline[linestyle=dotted](120,25)(140,25)
\psline[linestyle=dotted](120,5)(140,5)
\psline{<-}(30,15)(30,5)
\psline{<-}(50,15)(50,5)
\psline{<-}(70,15)(70,5)
\psline{<-}(90,15)(90,5)
\psline{<-}(110,15)(110,5)
\psline{<-}(150,15)(150,5)
\psline{<-}(170,15)(170,5)
\psline{<-}(40,5)(30,5)
\psline{<-}(60,5)(50,5)
\psline{<-}(80,5)(90,5)
\psline{<-}(100,5)(110,5)
\psline{<-}(160,5)(150,5)
\psline{<-}(40,25)(30,25)
\psline{<-}(60,25)(50,25)
\psline{<-}(80,25)(90,25)
\psline{<-}(100,25)(110,25)
\psline{<-}(160,25)(150,25)
\psline(30,5)(20,5)
\psline(40,5)(50,5)
\psline(60,5)(80,5)
\psline(90,5)(100,5)
\psline(110,5)(120,5)
\psline(140,5)(150,5)
\psline(160,5)(180,5)
\psline(30,25)(20,25)
\psline(40,25)(50,25)
\psline(60,25)(80,25)
\psline(90,25)(100,25)
\psline(110,25)(120,25)
\psline(140,25)(150,25)
\psline(160,25)(180,25)
\psline(30,15)(30,25)
\psline(50,15)(50,25)
\psline(70,25)(70,15)
\psline(90,25)(90,15)
\psline(110,25)(110,15)
\psline(150,25)(150,15)
\psline(170,25)(170,15)
\rput[bl](27,15){\scriptsize $h$}
\rput[bl](47,15){\scriptsize $h$}
\rput[bl](67,15){\scriptsize $h$}
\rput[bl](87,15){\scriptsize $h$}
\rput[bl](113,15){\scriptsize $h$}
\rput[bl](147,15){\scriptsize $h$}
\rput[bl](167,15){\scriptsize $h$}
\rput(40,2){\scriptsize $t_1$}
\rput(60,2){\scriptsize $t_2$}
\rput(80,2){\scriptsize $t_1$}
\rput(100,2){\scriptsize $t_2$}
\rput(40,28){\scriptsize $t_1$}
\rput(60,28){\scriptsize $t_2$}
\rput(80,28){\scriptsize $t_1$}
\rput(100,28){\scriptsize $t_2$}
\rput(20,2){\scriptsize $q_m$}
\rput(20,28){\scriptsize $q_m$}
\rput(160,2){\scriptsize $q_0$}
\rput(160,28){\scriptsize $q_0$}
\rput(40,15){\bf{\large $\nu_1$}}
\rput(60,15){\bf{\large  $\nu_2$}}
\rput(80,15){\bf{\large  $-\nu_1$}}
\rput(100,15){\bf{\large  $-\nu_2$}}
\rput(20,15){\bf{\large  $\rho_m$}}
\rput(160,15){\bf{\large  $\rho_0$}}
\rput(87,0){\bf{\large $-\delta$}}
\rput(90,31){\bf{\large  $\delta$}}
\end{pspicture}

\vspace{0.3cm}

  \caption{D\'ecomposition cellulaire, Type $o_1$}
  
==================================
\vspace{0.5cm}

\ifx\JPicScale\undefined\def\JPicScale{1}\fi
\psset{unit=\JPicScale mm}
\psset{linewidth=0.3,dotsep=1,hatchwidth=0.3,hatchsep=1.5,shadowsize=1,dimen=middle}
\psset{dotsize=0.7 2.5,dotscale=1 1,fillcolor=black}
\psset{arrowsize=1 2,arrowlength=1,arrowinset=0.25,tbarsize=0.7 5,bracketlength=0.15,rbracketlength=0.15}
\begin{pspicture}(30,30)(180,90)
\psline[linestyle=dotted](120,70)(140,70)
\psline[linestyle=dotted](120,50)(140,50)
\psline{<-}(30,60)(30,50)
\psline{<-}(50,60)(50,50)
\psline{<-}(70,60)(70,50)
\psline{<-}(90,60)(90,50)
\psline{<-}(110,60)(110,50)
\psline{<-}(150,60)(150,50)
\psline{<-}(170,60)(170,50)
\psline{<-}(40,50)(30,50)
\psline{<-}(60,50)(50,50)
\psline{<-}(80,50)(90,50)
\psline{<-}(100,50)(110,50)
\psline{<-}(160,50)(150,50)
\psline{<-}(40,70)(30,70)
\psline{<-}(60,70)(50,70)
\psline{<-}(80,70)(90,70)
\psline{<-}(100,70)(110,70)
\psline{<-}(160,70)(150,70)
\psline(30,50)(20,50)
\psline(40,50)(50,50)
\psline(60,50)(80,50)
\psline(90,50)(100,50)
\psline(110,50)(120,50)
\psline(140,50)(150,50)
\psline(160,50)(180,50)
\psline(30,70)(20,70)
\psline(40,70)(50,70)
\psline(60,70)(80,70)
\psline(90,70)(100,70)
\psline(110,70)(120,70)
\psline(140,70)(150,70)
\psline(160,70)(180,70)
\psline(30,70)(30,60)
\psline(50,70)(50,60)
\psline(70,70)(70,60)
\psline(90,70)(90,60)
\psline(110,70)(110,60)
\psline(150,70)(150,60)
\psline(170,70)(170,60)
\rput[bl](27,60){\scriptsize $h$}
\rput[bl](47,60){\scriptsize $h$}
\rput[bl](67,60){\scriptsize $h$}
\rput[bl](87,60){\scriptsize $h$}
\rput[bl](113,60){\scriptsize $h$}
\rput[bl](147,60){\scriptsize $h$}
\rput[bl](167,60){\scriptsize $h$}
\rput(40,47){\scriptsize $t_1$}
\rput(60,47){\scriptsize $t_2$}
\rput(80,47){\scriptsize $t_1$}
\rput(100,47){\scriptsize $t_2$}
\rput(40,73){\scriptsize $t_1$}
\rput(60,73){\scriptsize $t_1$}
\rput(80,73){\scriptsize $t_1$}
\rput(100,73){\scriptsize $t_2$}
\rput(20,47){\scriptsize $q_m$}
\rput(20,73){\scriptsize $q_m$}
\rput(160,47){\scriptsize $q_0$}
\rput(160,73){\scriptsize $q_0$}
\rput(40,60){\bf{\large  $-\nu_1$}}
\rput(60,60){\bf{\large  $\nu_2$}}
\rput(80,60){\bf{\large  $-\nu_1$}}
\rput(100,60){\bf{\large  $\nu_2$}}
\rput(20,60){\bf{\large $\rho_m$}}
\rput(160,60){\bf{\large   $\rho_0$}}
\rput(90,30){\bf{\large  $-\delta$}}
\rput(90,90){\bf{\large  $\delta$}}
\end{pspicture}

\vspace{0.3cm}

\caption{D\'ecomposition cellulaire, Type $o_2$} 

==================================


\vspace{0.5cm}

\begin{centering}

\ifx\JPicScale\undefined\def\JPicScale{1.3}\fi
\psset{unit=\JPicScale mm}
\psset{linewidth=0.3,dotsep=1,hatchwidth=0.3,hatchsep=1.5,shadowsize=1,dimen=middle}
\psset{dotsize=0.7 2.5,dotscale=1 1,fillcolor=black}
\psset{arrowsize=1 2,arrowlength=1,arrowinset=0.25,tbarsize=0.7 5,bracketlength=0.15,rbracketlength=0.15}
\begin{pspicture}(20,0)(130,36)
\psline{<-}(20,30)(10,30)
\psline{<-}(20,10)(10,10)
\psline{<-}(30,20)(30,10)
\psline{<-}(50,20)(50,10)
\psline{<-}(70,20)(70,10)
\psline{<-}(100,20)(100,10)
\psline{<-}(110,10)(100,10)
\psline(80,30)(20,30)
\psline(30,30)(30,20)
\psline(20,10)(30,10)
\psline(30,10)(80,10)
\psline(50,30)(50,20)
\psline(70,30)(70,20)
\psline(100,30)(100,20)
\psline(100,30)(90,30)
\psline(100,10)(90,10)
\psline[linestyle=dotted](90,30)(80,30)
\psline[linestyle=dotted](90,10)(80,10)
\psline{<-}(110,30)(100,30)
\rput[bl](27,20){\scriptsize $h$}
\rput[bl](47,20){\scriptsize $h$}
\rput[bl](67,20){\scriptsize $h$}
\rput[bl](97,20){\scriptsize $h$}
\rput[bl](20,32){\scriptsize $q_m$}
\rput[bl](40,30){}
\rput[bl](105,32){\scriptsize $q_0$}
\rput[bl](20,7){\scriptsize $q_m$}
\rput[bl](110,7){\scriptsize $q_0$}
\rput[bl](14,20){\bf{\large  $\rho_m$}}
\rput[bl](36,18){\bf{\large $B_1$}}
\rput[bl](56,18){\bf{\large  $B_2$}}
\rput[bl](110,20){\bf{\large  $\rho_0$}}
\rput[bl](60,35){\bf{\large  $\delta$}}
\rput[bl](50,1){\bf{\large $-\delta$}}
\end{pspicture}
\caption{D\'ecomposition cellulaire, Type $n_i$}

\end{centering}
==================================


\vspace{0.5cm}

\ifx\JPicScale\undefined\def\JPicScale{1}\fi
\psset{unit=\JPicScale mm}
\psset{linewidth=0.3,dotsep=1,hatchwidth=0.3,hatchsep=1.5,shadowsize=1,dimen=middle}
\psset{dotsize=0.7 2.5,dotscale=1 1,fillcolor=black}
\psset{arrowsize=1 2,arrowlength=1,arrowinset=0.25,tbarsize=0.7 5,bracketlength=0.15,rbracketlength=0.15}
\begin{pspicture}(0,0)(110,60)
\psline{<-}(20,35)(10,35)
\psline{<-}(10,25)(10,15)
\psline{<-}(20,15)(10,15)
\psline{<-}(30,25)(30,15)
\psline{<-}(40,15)(30,15)
\psline{<-}(40,35)(30,35)
\psline{<-}(50,25)(50,15)
\rput[bl](27,40){$\varepsilon_j=1$}
\rput[bl](27,0){\bf{\large  $B_j$}}
\psline(10,35)(10,25)
\psline(20,35)(30,35)
\psline(30,35)(30,25)
\psline(20,15)(30,15)
\psline(40,35)(50,35)
\psline(50,35)(50,25)
\psline(50,15)(40,15)
\rput[bl](7,25){\scriptsize $h$}
\rput[bl](27,25){\scriptsize $h$}
\rput[bl](47,25){\scriptsize $h$}
\rput[bl](20,36){\scriptsize $t_j$}
\rput[bl](17,10){\scriptsize $t_j$}
\rput[bl](18,23){\bf{\large $\nu_j$}}
\rput[bl](38,23){\bf{\large $\nu_j$}}
\rput[bl](40,36){\scriptsize $t_j$}
\rput[bl](40,10){\scriptsize $t_j$}

\psline{<-}(80,35)(70,35)
\psline{<-}(70,25)(70,15)
\psline{<-}(80,15)(70,15)
\psline{<-}(100,15)(90,15)
\psline{<-}(100,35)(90,35)
\psline{<-}(110,25)(110,15)
\rput[bl](87,0){\bf{\large $B_j$}}
\psline(70,35)(70,25)
\psline(80,35)(90,35)
\psline(80,15)(90,15)
\psline(100,35)(110,35)
\psline(110,35)(110,25)
\psline(110,15)(100,15)
\rput[bl](67,25){\scriptsize $h$}
\rput[bl](86,25){\scriptsize $h$}
\rput[bl](107,25){\scriptsize $h$}
\rput[bl](80,11){\scriptsize $t_j$}
\rput[bl](71,22){\bf{\large $-\nu_j$}}
\rput[bl](98,22){\bf{\large$\nu_j$}}
\rput[bl](100,11){\scriptsize $t_j$}
\rput[bl](85,40){$\varepsilon_j=-1$}
\rput[bl](80,31){\scriptsize $t_j$}
\rput[bl](100,31){\scriptsize $t_j$}
\psline(90,15)(90,25)
\psline{<-}(90,25)(90,35)
\end{pspicture}

\vspace{0.5cm}

\caption{Description de $B_j$ pour les Types $n_1, n_2$}

==================================


\ifx\JPicScale\undefined\def\JPicScale{1}\fi
\psset{unit=\JPicScale mm}
\psset{linewidth=0.3,dotsep=1,hatchwidth=0.3,hatchsep=1.5,shadowsize=1,dimen=middle}
\psset{dotsize=0.7 2.5,dotscale=1 1,fillcolor=black}
\psset{arrowsize=1 2,arrowlength=1,arrowinset=0.25,tbarsize=0.7 5,bracketlength=0.15,rbracketlength=0.15}

\begin{pspicture}(-10,4)(100,39)
\psline[ArrowInside=->](40,5)(40,30)
\psline[ArrowInside=->](40,5)(65,5)
\psline[ArrowInside=->](65,5)(65,30)
\psline[ArrowInside=->](40,30)(65,30)
\psline[ArrowInside=->](40,5)(65,30)
\rput(67,18){\scriptsize $h$}
\rput(37,18){\scriptsize $h$}
\rput(57,18){\scriptsize $g_k$}
\rput(54,2){\scriptsize $q_k$}
\rput(54,33){\scriptsize $q_k$}
\rput(48,23){\bf{\large $\rho_{k,2}$}}
\rput(55,10){\bf{\large $\rho_{k,1}$}}
\end{pspicture}

\vspace{0.5cm}

\caption{D\'ecomposition simpliciale de $\rho_k$}

==================================
\vspace{0.5cm}

\begin{centering}
\ifx\JPicScale\undefined\def\JPicScale{1}\fi
\psset{unit=\JPicScale mm}
\psset{linewidth=0.3,dotsep=1,hatchwidth=0.3,hatchsep=1.5,shadowsize=1,dimen=middle}
\psset{dotsize=0.7 2.5,dotscale=1 1,fillcolor=black}
\psset{arrowsize=1 2,arrowlength=1,arrowinset=0.25,tbarsize=0.7 5,bracketlength=0.15,rbracketlength=0.15}

\begin{pspicture}(-3,0)(100,25)

\psline[ArrowInside=->](0,0)(25,0)
\psline[ArrowInside=->](25,0)(50,0)
\psline[ArrowInside=->](75,0)(50,0)
\psline[ArrowInside=->](100,0)(75,0)
\psline[ArrowInside=->](0,25)(25,25)
\psline[ArrowInside=->](25,25)(50,25)
\psline[ArrowInside=->](75,25)(50,25)
\psline[ArrowInside=->](100,25)(75,25)

\psline[ArrowInside=->](0,0)(0,25)
\psline[ArrowInside=->](25,0)(25,25)
\psline[ArrowInside=->](50,0)(50,25)
\psline[ArrowInside=->](75,0)(75,25)
\psline[ArrowInside=->](100,0)(100,25)

\psline[ArrowInside=->](0,0)(25,25)
\psline[ArrowInside=->](25,0)(50,25)
\psline[ArrowInside=->](75,0)(50,25)
\psline[ArrowInside=->](100,0)(75,25)

\rput(5,18){\bf{\large $\nu_{1,2}$}}
\rput(18,5){\bf{\large $\nu_{1,1}$}}
\rput(30,18){\bf{\small $\nu_{2,2}$}}
\rput(43,5){\bf{\large $\nu_{2,1}$}}
\rput(63,18){\bf{\large $-\nu_{1,2}$}}
\rput(60,5){\bf{\large $-\nu_{1,1}$}}
\rput(87,5){\bf{\large $-\nu_{2,1}$}}
\rput(90,18){\bf{\large $-\nu_{2,2}$}}
\rput(13,-4){$t_1$}
\rput(38,-4){$t_2$}
\rput(63,-4){$t_1$}
\rput(88,-4){$t_2$}
\rput(13,28){$t_1$}
\rput(38,28){$t_2$}
\rput(63,28){$t_1$}
\rput(88,28){$t_2$}
\rput(103,15){$h$}
\rput(-3,15){$h$}
\rput(22,15){$h$}
\rput(47,15){$h$}
\rput(72,15){$h$}
\rput(9,13){$f_1$}
\rput(33,13){$f_2$}
\rput(60,13){$f_1$}
\rput(83,13){$f_2$}
\end{pspicture}

\vspace{0.5cm}

\caption{D\'ecomposition simpliciale de $\nu_1$, Type $o_1$}
\end{centering}

==================================
\vfill\eject

\vspace{0.9cm}

\ifx\JPicScale\undefined\def\JPicScale{1.5}\fi
\psset{unit=\JPicScale mm}
\psset{linewidth=0.3,dotsep=1,hatchwidth=0.3,hatchsep=1.5,shadowsize=1,dimen=middle}
\psset{dotsize=0.7 2.5,dotscale=1 1,fillcolor=black}
\psset{arrowsize=1 2,arrowlength=1,arrowinset=0.25,tbarsize=0.7 5,bracketlength=0.15,rbracketlength=0.15}
\begin{pspicture}(-10,0)(100,40)

\psline[ArrowInside=->](0,0)(25,0)
\psline[ArrowInside=->](25,0)(50,0)
\psline[ArrowInside=->](75,0)(50,0)
\psline[ArrowInside=->](100,0)(75,0)
\psline[ArrowInside=->](0,25)(25,25)
\psline[ArrowInside=->](25,25)(50,25)
\psline[ArrowInside=->](75,25)(50,25)
\psline[ArrowInside=->](100,25)(75,25)

\psline[ArrowInside=->](0,0)(0,25)
\psline[ArrowInside=->](25,25)(25,0)
\psline[ArrowInside=->](50,0)(50,25)
\psline[ArrowInside=->](75,25)(75,0)
\psline[ArrowInside=->](100,0)(100,25)

\psline[ArrowInside=->](0,0)(25,25)
\psline[ArrowInside=->](25,25)(50,0)
\psline[ArrowInside=->](50,0)(75,25)
\psline[ArrowInside=->](100,0)(75,25)

\rput(7,18){\bf{\large $-\nu_{1,2}$}}
\rput(18,5){\bf{\large $-\nu_{1,1}$}}
\rput(33,5){\bf{\large $-\nu_{2,2}$}}
\rput(41,18){\bf{\large $-\nu_{2,1}$}}
\rput(61,18){\bf{\large $-\nu_{1,2}$}}
\rput(67,5){\bf{\large $-\nu_{1,1}$}}
\rput(87,5){\bf{\large $-\nu_{2,1}$}}
\rput(90,18){\bf{\large $-\nu_{2,2}$}}
\rput(13,-4){$t_1$}
\rput(38,-4){$t_2$}
\rput(63,-4){$t_1$}
\rput(88,-4){$t_2$}
\rput(13,28){$t_1$}
\rput(38,28){$t_2$}
\rput(63,28){$t_1$}
\rput(88,28){$t_2$}

\rput(-3,15){$h$}
\rput(22,15){$h$}
\rput(47,15){$h$}
\rput(72,15){$h$}
\rput(103,15){$h$}
\rput(9,13){$f_1$}
\rput(33,13){$f_2$}
\rput(60,13){$f_1$}
\rput(83,13){$f_2$}
\end{pspicture}

\vspace{0.9cm}

\caption{D\'ecomposition simpliciale de $-\nu_1$, Type $o_2$}

==================================


\vspace{0.9cm}

\ifx\JPicScale\undefined\def\JPicScale{1.8}\fi
\psset{unit=\JPicScale mm}
\psset{linewidth=0.3,dotsep=1,hatchwidth=0.3,hatchsep=1.5,shadowsize=1,dimen=middle}
\psset{dotsize=0.7 2.5,dotscale=1 1,fillcolor=black}
\psset{arrowsize=1 2,arrowlength=1,arrowinset=0.25,tbarsize=0.7 5,bracketlength=0.15,rbracketlength=0.15}
\begin{pspicture}(0,0)(130,60)
\psline [ArrowInside=->](7,10)(7,35)
\psline [ArrowInside=->](32,10)(32,35)
\psline [ArrowInside=->](57,10)(57,35)

\psline [ArrowInside=->](7,10)(32,10)
\psline [ArrowInside=->](32,10)(57,35)
\psline [ArrowInside=->](7,35)(32,35)
\psline [ArrowInside=->](7,10)(32,35)
\psline [ArrowInside=->](32,10)(57,10)
\psline [ArrowInside=->](32,35)(57,35)

\rput[bl](4,22){\scriptsize $h$}
\rput[bl](29,22){\scriptsize $h$}
\rput[bl](54,22){\scriptsize $h$}
\rput[bl](20,6){\scriptsize $t_j$}
\rput[bl](45,6){\scriptsize $t_j$}
\rput[bl](20,36){\scriptsize $t_j$}
\rput[bl](45,36){\scriptsize $t_j$}
\rput[bl](20,19){\scriptsize $f_j$}
\rput[bl](45,19){\scriptsize $f_j$}

\rput[bl](27,42){\bf\small$\varepsilon_j=1$}

\rput[bl](12,25){\bf{\large $\nu_{j,2}$}}
\rput[bl](37,25){\bf{\large $\nu_{j,2}$}}
\rput[bl](21,12){\bf{\large $\nu_{j,1}$}}
\rput[bl](46,12){\bf{\large $\nu_{j,1}$}}

\psline [ArrowInside=->](72,10)(72,35)
\psline [ArrowInside=->](97,35)(97,10)
\psline [ArrowInside=->](122,10)(122,35)

\psline [ArrowInside=->](72,10)(97,10)
\psline [ArrowInside=->](97,35)(122,10)
\psline [ArrowInside=->](72,35)(97,35)
\psline [ArrowInside=->](72,10)(97,35)
\psline [ArrowInside=->](97,10)(122,10)
\psline [ArrowInside=->](97,35)(122,35)

\rput[bl](69,22){\scriptsize $h$}
\rput[bl](94,22){\scriptsize $h$}
\rput[bl](123,22){\scriptsize $h$}
\rput[bl](85,6){\scriptsize $t_j$}
\rput[bl](110,6){\scriptsize $t_j$}
\rput[bl](85,36){\scriptsize $t_j$}
\rput[bl](110,36){\scriptsize $t_j$}
\rput[bl](85,19){\scriptsize $f_j$}
\rput[bl](113,19){\scriptsize $f_j$}

\rput[bl](92,42){\bf\small $\varepsilon_j=-1$}

\rput[bl](77,25){\bf{\large $\nu_{j,2}$}}
\rput[bl](84,12){\bf{\large $\nu_{j,1}$}}
\rput[bl](100,12){\bf{\large $-\nu_{j,2}$}}
\rput[bl](103,25){\bf{\large $-\nu_{j,1}$}}
\end{pspicture}

\vspace{0.9cm}

\caption{D\'ecomposition simpliciale de $-\nu_j$ quand $\varepsilon_j=1$ et quand $\varepsilon_j=-1$ }

==================================
\vfill\eject

\ifx\JPicScale\undefined\def\JPicScale{2}\fi
\psset{unit=\JPicScale mm}
\psset{linewidth=0.3,dotsep=1,hatchwidth=0.3,hatchsep=1.5,shadowsize=1,dimen=middle}
\psset{dotsize=0.7 2.5,dotscale=1 1,fillcolor=black}
\psset{arrowsize=1 2,arrowlength=1,arrowinset=0.25,tbarsize=0.7 5,bracketlength=0.15,rbracketlength=0.15}
\begin{pspicture}(-20,0)(66.12,70.4)
\rput{0}(35.02,39.3){\psellipse[linewidth=0.35,linestyle=dashed,dash=1 1](0,0)(31.1,31.1)}
\psline[ArrowInside=->](34,39.38)(5,39.38)
\psline[ArrowInside=->](34,40)(9.38,55.62)
\psline[ArrowInside=->](34,39)(9,22)
\psline[ArrowInside=->](34.38,39.38)(21,11)
\psline[ArrowInside=->](34.38,39.38)(39,9)
\psline[ArrowInside=->](34,40)(57,17)
\psline[ArrowInside=->](34.38,39.38)(63,53)
\psline[ArrowInside=->](34.38,39.38)(51,66)
\psline[ArrowInside=->](8.75,55.62)(4.38,39.38)
\psline[ArrowInside=->](4.38,39.38)(9.38,22.5)
\psline[ArrowInside=->](9.38,22.5)(21.25,11.25)
\psline[ArrowInside=->](39.38,8.75)(20.62,11.25)
\psline[ArrowInside=->](56.88,16.88)(39.38,8.75)
\psline[ArrowInside=->](63.12,53.12)(51.25,66.25)
\psbezier[linewidth=0.25,linestyle=dashed,dash=1 1](41.25,53.12)(34.25,56.19)(28.81,55.06)(23.12,49.38)
\psbezier[linewidth=0.25,linestyle=dashed,dash=1 1](48.75,45)(50.94,38.44)(50.38,33.94)(46.88,30)
\rput[bl](33.75,41.88){\scriptsize $a$}
\rput[bl](1,48){\scriptsize $q_m$}
\rput[bl](2,27.5){\scriptsize $t_1$}
\rput[bl](11.25,14.38){\scriptsize $t_2$}
\rput[bl](29.38,5.62){\scriptsize $t_1$}
\rput[bl](49.38,7.38){\scriptsize $t_2$}
\rput[bl](59.38,61.88){}
\rput[bl](59.38,61.88){\scriptsize $q_0$}
\rput[bl](18,51){\scriptsize $e_{4g+m}$}
\rput[bl](16.88,40){\scriptsize $e_0$}
\rput[bl](20,31.25){\scriptsize $e_1$}
\rput[bl](24,23.75){\scriptsize $e_2$}
\rput[bl](33.75,21.25){\scriptsize $e_3$}
\rput[bl](43,26.25){\scriptsize $e_4$}
\rput[bl](51.88,44.38){\scriptsize $e_{4g}$}
\rput[bl](41.88,56.25){\scriptsize $e_{4g+1}$}
\rput[bl](45,48){\bf{\large $\delta_{4g}$}}
\rput[bl](42.21,19){\bf{\large $\delta_3$}}
\rput[bl](29,17){\bf{\large $\delta_2$}}
\rput[bl](19,21.5){\bf{\large $\delta_1$}}
\rput[bl](13.75,31){\bf{\large $\delta_0$}}
\rput[bl](13.12,43){\bf{\large $\delta_{4g+m}$}}
\end{pspicture}

\vspace{1cm}

\caption{D\'ecomposition simpliciale de $\delta$, Type $o_i$}

==================================

\vspace{1cm}

\ifx\JPicScale\undefined\def\JPicScale{2}\fi
\psset{unit=\JPicScale mm}
\psset{linewidth=0.3,dotsep=1,hatchwidth=0.3,hatchsep=1.5,shadowsize=1,dimen=middle}
\psset{dotsize=0.7 2.5,dotscale=1 1,fillcolor=black}
\psset{arrowsize=1 2,arrowlength=1,arrowinset=0.25,tbarsize=0.7 5,bracketlength=0.15,rbracketlength=0.15}
\begin{pspicture}(-20,0)(66.12,70.4)
\rput{0}(35.02,39.3){\psellipse[linewidth=0.35,linestyle=dashed,dash=1 1](0,0)(31.1,31.1)}
\psline[ArrowInside=->](34,40.38)(5,39.38)
\psline[ArrowInside=->](34,40)(9.38,55.62)
\psline[ArrowInside=->](34,40)(9,22)
\psline[ArrowInside=->](34,40)(21,11)
\psline[ArrowInside=->](34,40)(63,53)
\psline[ArrowInside=->](34,40)(51,66)
\psline[ArrowInside=->](8.75,55.62)(4.38,39.38)
\psline[ArrowInside=->](4.38,39.38)(9.38,22.5)
\psline[ArrowInside=->](9.38,22.5)(21.25,11.25)
\psline[ArrowInside=->](63.12,53.12)(51.25,66.25)
\psbezier[linestyle=dashed,dash=1 1](30.62,26.25)(43.31,25.38)(48.75,30.62)(48.75,43.75)
\psbezier[linestyle=dashed,dash=1 1](41.25,54.38)(34.69,57.88)(29.06,56.94)(22.5,51.25)
\rput(33.12,43.12){\scriptsize $a$}
\rput(60.38,61.88){\scriptsize $q_0$}
\rput(2.38,48.12){\scriptsize $q_m$}
\rput(15.62,41.25){\scriptsize $e_0$}
\rput(17.5,30){\scriptsize $e_1$}
\rput(25,23.12){\scriptsize $e_2$}
\rput(12,33){\bf{\large $\delta_0$}}
\rput(19.38,23){\bf{\large $\delta_1$}}
\rput(14,47){\large $\delta_{2g+m}$}
\rput(3.38,30){\scriptsize $t_1$}
\rput(11,16){\scriptsize $t_1$}
\end{pspicture}

\vspace{1cm}

\caption{D\'ecomposition simpliciale de $\delta$, Type $n_i$}

==================================

\ifx\JPicScale\undefined\def\JPicScale{1.3}\fi
\psset{unit=\JPicScale mm}
\psset{linewidth=0.3,dotsep=1,hatchwidth=0.3,hatchsep=1.5,shadowsize=1,dimen=middle}
\psset{dotsize=0.7 2.5,dotscale=1 1,fillcolor=black}
\psset{arrowsize=1 2,arrowlength=1,arrowinset=0.25,tbarsize=0.7 5,bracketlength=0.15,rbracketlength=0.15}
\begin{pspicture}(0,0)(96,70.4)
\rput{0}(45.02,39.3){\psellipse[linestyle=dashed,dash=1 1](0,0)(31.1,31.1)}
\psline[ArrowInside=->](44,39.38)(15,39.38)
\psline[ArrowInside=->](44,40)(19.38,55.62)
\psline[ArrowInside=->](44,39)(19,22)
\psline[ArrowInside=->](44.38,39.38)(31,11)
\psline[ArrowInside=->](18.75,55.62)(14.38,39.38)
\psline[ArrowInside=->](14.38,39.38)(19.38,22.5)
\psline[ArrowInside=->](19.38,22.5)(31.25,11.25)
\rput{0}(44.05,42.46){\psellipticarc[linestyle=dashed,dash=1 1](0,0)(14,14){-101.53}{150.41}}
\rput(90,40){\bf{\large $b_k>0$}}
\rput(46.88,40){\scriptsize $c_k$}
\rput(26.25,53.12){\scriptsize $P_{k,z_{k}}$}
\rput(26.25,41.25){\scriptsize $P_{k,1}$}
\rput(27.5,30){\scriptsize $P_{k,2}$}
\rput(33.75,25){\scriptsize $P_{k,3}$}
\rput(11.88,48.12){\scriptsize $h$}
\rput(13.12,30){\scriptsize $q_{k}$}
\rput(22.5,15){\scriptsize $x_{k,2}$}
\rput(23,47){\bf{\large $\mu_{k,z_k}$}}
\rput(24.38,34){\bf{\large $\mu_{k,1}$}}
\rput(29.38,21.88){\bf{\large $\mu_{k,2}$}}
\end{pspicture}

\vspace{0.5cm}
\caption{D\'ecomposition simpliciale de $\mu_k$ pour $b_k>0$}

==================================

\vspace{0.5cm}

\ifx\JPicScale\undefined\def\JPicScale{0.5}\fi
\psset{unit=\JPicScale mm}
\psset{linewidth=0.3,dotsep=1,hatchwidth=0.3,hatchsep=1.5,shadowsize=1,dimen=middle}
\psset{dotsize=0.7 2.5,dotscale=1 1,fillcolor=black}
\psset{arrowsize=1 2,arrowlength=1,arrowinset=0.25,tbarsize=0.7 5,bracketlength=0.15,rbracketlength=0.15}

\begin{pspicture}(0,0)(101,72)
\rput{0}(50.02,44.3){\psellipse[ArrowInside=->](0,0)(31.1,31.1)}
\psline{->}(49.38,44.38)(42.19,30.1)
\psline(42.19,30.1)(36,16)
\psline{->}(35,5)(55,5)

\rput(98.62,46.25){\bf{\large $b_k=0$}}
\rput(40,60){\large $\mu_{k,1}$}
\rput(50,29){\scriptsize $P_{k,1}$}
\rput(49.38,46){\scriptsize $c_k$}
\rput(71.88,70.62){\scriptsize $q_k$}

\end{pspicture}

\vspace{0.5cm}
\caption{D\'ecomposition simpliciale de $\mu_k$ pour $b_k=0$}

==================================


\vfill\eject

 Les figures suivantes sont des projections des d\'ecompositions simpliciales de chacun des 3-simplexes. Les sommets sont de points carr\'es. Ils repr\'esentent la projection d'une ar\^ ete. Ci-dessous, nous donnons en exemple le codage des points carr\'es sur le 3-simplexe $D_0^+$.

\vspace{2cm}

\ifx\JPicScale\undefined\def\JPicScale{2.5}\fi
\psset{unit=\JPicScale mm}
\psset{linewidth=0.3,dotsep=1,hatchwidth=0.3,hatchsep=1.5,shadowsize=1,dimen=middle}
\psset{dotsize=0.7 2.5,dotscale=1 1,fillcolor=black}
\psset{arrowsize=1 2,arrowlength=1,arrowinset=0.25,tbarsize=0.7 5,bracketlength=0.15,rbracketlength=0.15}

\begin{pspicture}(10,40)(120,54.1)
\rput[Bl](63,60.62){\scriptsize $A^+$}
\rput[Bl](43,54){\small $E_0^+$}
\rput[Bl](55.62,48.12){\small  $E_1^+$}
\rput[Bl](23,34){\scriptsize $S_0^+$}
\rput[Bl](45,34){\scriptsize $S_1^+$}
\rput[Bl](35,34){\small  $T_0^+$}
\rput[bl](30,60){\bf{\Large $D_0^+$}}
\rput[bl](123,45){\scriptsize $b$}
\rput[Bl](123.12,60.62){\scriptsize $a$}
\rput[Bl](124,52){\scriptsize $A^+$}
\rput[Bl](109,37){\scriptsize $\sigma$}
\rput[Bl](116,40){\scriptsize $S_1^+$}
\rput[Bl](113,45){\scriptsize $S_0^+$}
\rput[Bl](98,37){\scriptsize $t_1$}
\psdots[dotstyle=square](27,39)
\psdots[dotstyle=square](49,39)
\psdots[dotstyle=square](62,61)
\pspolygon[fillstyle=vlines,hatchangle=0](27,39)(49,39)(62,61)
\pspolygon[fillstyle=vlines,hatchangle=0](87,39)(109,39)(122,61)
\psline[ArrowInside=->](122,45)(122,61)
\psline[ArrowInside=->](122,45)(109,39)
\psline[ArrowInside=->,linestyle=dashed](122,45)(87,39)

\end{pspicture}

\vspace{1.5cm}

\caption{Codage des points carr\'es sur le 3-simplexe $D_0^+$}

==================================


\vspace{0.5cm}

\ifx\JPicScale\undefined\def\JPicScale{3}\fi
\psset{unit=\JPicScale mm}
\psset{linewidth=0.3,dotsep=1,hatchwidth=0.3,hatchsep=1.5,shadowsize=1,dimen=middle}
\psset{dotsize=0.7 2.5,dotscale=1 1,fillcolor=black}
\psset{arrowsize=1 2,arrowlength=1,arrowinset=0.25,tbarsize=0.7 5,bracketlength=0.15,rbracketlength=0.15}

\begin{pspicture}(24,-5)(110,60)
\psline(25,20)(30.62,20)
\psdots[dotstyle=square](30.62,20)
\psline(30.62,20)(50.62,20)
\psdots[dotstyle=square](30.62,20)
(50.62,20)
\psline(50.62,20)(62.5,20)
\psdots[dotstyle=square](50.62,20)

\psline[linestyle=dashed,dash=1 1](62.5,20)(77.5,20)
\psdots[linestyle=dashed,dash=1 1,dotstyle=square](77.5,20)
\psline(77.5,20)(99.38,20)
\psdots[dotstyle=square]
(99.38,20)
\psline(24.38,40)(30,40)
\psdots[dotstyle=square]
(30,40)
\psline(30,40)(50,40)
\psdots[dotstyle=square](30,40)
(50,40)
\psline(50,40)(61.88,40)
\psdots[dotstyle=square](50,40)

\psline[linestyle=dashed,dash=1 1](61.88,40)(76.88,40)

\psline(76.88,40)(99.38,40)
\psdots[dotstyle=square](76.88,40)
(99.38,40)
\psline(99.38,40)(109.38,40)
\psdots[dotstyle=square](99.38,40)

\psline(60.62,59.38)(30,40)
\psdots[dotstyle=square](60.62,59.38)
(30,40)
\psline(60.62,59.38)(50,40)
\psdots[dotstyle=square](60.62,59.38)
(50,40)
\psline(60.62,59.38)(76.88,40)
\psdots[dotstyle=square](60.62,59.38)
(76.88,40)
\psline(60.62,59.38)(99.38,40)
\psdots[dotstyle=square](60.62,59.38)

\psline(60,0)(30.62,20)
\psdots[dotstyle=square](60,0)
(30.62,20)
\psline(60,0)(50.62,20)
\psdots[dotstyle=square](60,0)
(50.62,20)
\psline(60,0)(77.5,20)
\psdots[dotstyle=square](60,0)

\psline(60,0)(99.38,20)
\psline(30.62,20.62)(30,39.38)
\psline(50.62,20.62)(50,39.38)
\psline(77.5,20.62)(76.88,39.38)
\psline(99.38,20)(110,20)
\psline(99.38,20.62)(99.38,39.38)
\psline(78.12,20.62)(98.12,38.75)
\psline(31.25,21.25)(49.38,39.38)

\rput[Bl](62.12,60.62){\scriptsize $A^+$}
\rput[Bl](62.12,-2.5){\scriptsize $A^-$}
\rput[Bl](43.75,54){\small  $E_0^+$}
\rput[Bl](55.62,48.12){\small  $E_1^+$}
\rput[Bl](67,42.5){\scriptsize $S_{*+k}^+$}

\rput[Bl](100,41.88){\small  $T_{*+k}^+$}
\rput[Bl](25.62,41.88){\scriptsize $S_0^+$}
\rput[Bl](40.62,41.88){\small  $T_0^+$}
\rput[Bl](53.12,41.88){\scriptsize $S_1^+$}
\rput[Bl](24,28){\small  $H_0$}

\rput[Bl](52.5,28){\small  $H_1$}
\rput[Bl](40.62,28.75){\small  $F_0$}
\rput[Bl](31,33.8){\large $N_{1,2}$}
\rput[Bl](38,22){\large $N_{1,1}$}
\rput[Bl](67,28){\small  $H_{*+k}$}

\rput[Bl](78.12,15.12){\scriptsize $S_{*+k}^-$}
\rput[bl](81.25,33){\large $R_{k,2}$}
\rput[bl](90,24.38){\large $R_{k,1}$}
\rput[bl](44.88,43.88){\large $D_0^+$}
\rput[bl](28,15.88){\scriptsize $S_0^-$}

\rput[bl](38.62,15.12){\small  $T_0^-$}
\rput[bl](52.75,15.12){\scriptsize $S_1^-$}
\rput[bl](42.11,5){\small  $E_0^-$}
\rput[Bl](78,30){\small  $F_{*+k}$}
\end{pspicture}


\vspace{0.5cm}
\caption{Parties communes des d\'ecompositions simpliciales de $\epsilon$ pour tous les  Types}
==================================

\vfill\eject

Les quatre figures suivantes sont les d\'etails de la partie centrale de la figure ci-dessus, pour le d\'ebut de la longue relation.

\vspace{0.5cm}

\ifx\JPicScale\undefined\def\JPicScale{1.5}\fi
\psset{unit=\JPicScale mm}
\psset{linewidth=0.3,dotsep=1,hatchwidth=0.3,hatchsep=1.5,shadowsize=1,dimen=middle}
\psset{dotsize=0.7 2.5,dotscale=1 1,fillcolor=black}
\psset{arrowsize=1 2,arrowlength=1,arrowinset=0.25,tbarsize=0.7 5,bracketlength=0.15,rbracketlength=0.15}

\begin{pspicture}(14,10)(100.62,38)
\psline(20.62,10)(20.62,30)
\psdots[dotstyle=square](20.62,10)
(20.62,30)
\psline(20.62,30)(40,30)
\psdots[dotstyle=square](20.62,30)
(40,30)
\psline(40,30)(40,10)
\psdots[dotstyle=square](40,30)
(40,10)
\psline(40,10)(20.62,10)
\psdots[dotstyle=square](40,10)
(20.62,10)
\psline(60,30)(40,30)
\psdots[dotstyle=square](60,30)
(40,30)
\psline(60.62,10)(40,10)
\psdots[dotstyle=square](60.62,10)
(40,10)
\psline(80.62,30)(60,30)
\psdots[dotstyle=square](80.62,30)
(60,30)
\psline(80.62,10)(60.62,10)
\psdots[dotstyle=square](80.62,10)
(60.62,10)
\psline[linestyle=dashed,dash=1 1](100.62,10)(110,10)
\psline[linestyle=dashed,dash=1 1](100.62,30)(110,30)
\psline(100.62,30)(80.62,30)
\psdots[dotstyle=square](100.62,30)
(80.62,30)
\psline(100,10)(80.62,10)
\psdots[dotstyle=square](100,10)
(80.62,10)
\psline(60.62,10.62)(60,29.38)
\psline(80.62,10.62)(80.62,29.38)
\psline(100.62,10.62)(100.62,29.38)
\psline(21.25,10.62)(39.38,29.38)
\psline(40.62,10.62)(59.38,29.38)
\psline(60,30)(80,10.68)
\psline(81.25,29.38)(100,10.62)
\rput[bl](26.88,20){\small  $F_0$}
\rput[bl](46.88,20){\small  $F_1$}
\rput[bl](65,20){\small $F_2$}
\rput[bl](85,20){\small $F_3$}
\rput[bl](16,20){\small $H_0$}
\rput[bl](35,20){\small $H_1$}
\rput[bl](56,20){\small $H_2$}
\rput[bl](76,20){\small $H_3$}
\rput[bl](102,20){\small $H_4$}
\rput[bl](23.12,23.75){\large $N_{1,2}$}
\rput[bl](45,23.75){\large  $N_{2,2}$}
\rput[bl](65,23.38){\large  $N'_{1,2}$}
\rput[bl](85.40,23.38){\large $N'_{2,2}$}
\rput[bl](30,12){\large   $N_{1,1}$}
\rput[bl](50.62,12){\large  $N_{2,1}$}
\rput[bl](64,12){\large  $N'_{1,1}$}
\rput[bl](85.62,12){\large  $N'_{2,1}$}
\rput[bl](20,31){\scriptsize $S_0^+$}
\rput[bl](20,6){\scriptsize $S_0^-$}
\rput[bl](40,31){\scriptsize $S_1^+$}
\rput[bl](40,6){\scriptsize $S_1^-$}
\rput[bl](60,31){\scriptsize $S_2^+$}
\rput[bl](60,6){\scriptsize $S_2^-$}
\rput[bl](80,31){\scriptsize $S_3^+$}
\rput[bl](80,6){\scriptsize $S_3^-$}
\rput[bl](100,31){\scriptsize $S_4^+$}
\rput[bl](100,6){\scriptsize $S_4^-$}

\rput[bl](30,31){\small $T_0^+$}
\rput[bl](30,6){\small $T_0^-$}
\rput[bl](50,31){\small $T_1^+$}
\rput[bl](50,6){\small $T_1^-$}
\rput[bl](70,31){\small $T_2^+$}
\rput[bl](70,6){\small $T_2^-$}
\rput[bl](90,31){\small $T_3^+$}
\rput[bl](90,6){\small $T_3^-$}
\end{pspicture}

\vspace{1.5cm}

\caption{Partie centrale pour la d\'ecomposition simpliciale de $\epsilon$ pour  le  Type $o_1$ }

==================================



\ifx\JPicScale\undefined\def\JPicScale{1.5}\fi
\psset{unit=\JPicScale mm}
\psset{linewidth=0.3,dotsep=1,hatchwidth=0.3,hatchsep=1.5,shadowsize=1,dimen=middle}
\psset{dotsize=0.7 2.5,dotscale=1 1,fillcolor=black}
\psset{arrowsize=1 2,arrowlength=1,arrowinset=0.25,tbarsize=0.7 5,bracketlength=0.15,rbracketlength=0.15}

\begin{pspicture}(0,8)(90.62,35)
\psline(10.62,10)(10.62,30)
\psdots[dotstyle=square](10.62,10)
(10.62,30)
\psline(10.62,30)(30,30)
\psdots[dotstyle=square](10.62,30)
(30,30)
\psline(30,30)(30,10)
\psdots[dotstyle=square](30,30)
(30,10)
\psline(30,10)(10.62,10)
\psdots[dotstyle=square](30,10)
(10.62,10)
\psline(50,30)(30,30)
\psdots[dotstyle=square](50,30)
(30,30)
\psline(50.62,10)(30,10)
\psdots[dotstyle=square](50.62,10)
(30,10)
\psline(70.62,30)(50,30)
\psdots[dotstyle=square](70.62,30)
(50,30)
\psline(70.62,10)(50.62,10)
\psdots[dotstyle=square](70.62,10)
(50.62,10)
\psline(90.62,30)(70.62,30)
\psdots[dotstyle=square](90.62,30)
(70.62,30)
\psline(90,10)(70.62,30)
\psdots[dotstyle=square](90,10)
(70.62,10)
\psline(50.62,10.62)(50,29.38)
\psline(70,10)(90,10)
\psline(90.62,10.62)(90.62,29.38)
\psline(11.25,10.62)(29.38,29.38)
\psline(70,10)(70,30)

\psline[linestyle=dashed,dash=1 1](90,10.62)(98,10.62)
\psline[linestyle=dashed,dash=1 1](90,30)(98,30)

\rput[bl](16.88,19.38){\small $F_0$}
\rput[bl](34,20.62){\small $F_1$}
\rput[bl](56.88,20){\small $F_2$}
\rput[bl](81,19){\small $F_3$}
\rput[bl](13.12,23.75){\large $N_{1,2}$}
\rput[bl](36,23.75){\large  $N_{2,1}$}
\rput[bl](54.38,23.38){\large $N'_{1,1}$}
\rput[bl](75.62,23.38){\large  $N'_{2,2}$}
\rput[bl](20,14.38){\large  $N_{1,1}$}
\rput[bl](35,13.38){\large  $N_{2,2}$}
\rput[bl](60.62,14.38){\large  $N'_{1,2}$}
\rput[bl](73.62,13.38){\large  $N'_{2,1}$}
\psline(50,10.62)(30.62,29.38)
\psline(51.25,10.62)(70,29.38)
\end{pspicture}
\caption{Partie centrale pour la d\'ecomposition simpliciale de $\epsilon$ pour  le  Type $o_2$}
==================================



\ifx\JPicScale\undefined\def\JPicScale{1.5}\fi
\psset{unit=\JPicScale mm}
\psset{linewidth=0.3,dotsep=1,hatchwidth=0.3,hatchsep=1.5,shadowsize=1,dimen=middle}
\psset{dotsize=0.7 2.5,dotscale=1 1,fillcolor=black}
\psset{arrowsize=1 2,arrowlength=1,arrowinset=0.25,tbarsize=0.7 5,bracketlength=0.15,rbracketlength=0.15}
\begin{pspicture}(-8,-8)(60,30.62)

\psline(0,0)(50,0)(50,25)(0,25)(0,0)
\psdots[dotstyle=square](0,0)
(25,0)(50,0)(50,25)(25,25)(0,25)
\psline(70,0)(120,0)(120,25)(95,25)(70,25)(70,00)
\psdots[dotstyle=square](70,0)
(95,0)(120,0)(120,25)(95,25)(70,25)
\psline(0,0)(25,25)
\psline(25,0)(50,25)
\psline(70,0)(95,25)
\psline(95,25)(120,0)
\psline(25,25)(25,0)
\psline(95,25)(95,0)

\rput[bl](10,12){\small $F_{2j-2}$}
\rput[bl](36,12){\small $F_{2j-1}$}
\rput[bl](81,12){\small $F_{2j-2}$}
\rput[bl](105,12){\small $F_{2j-1}$}

\rput[bl](-8,8){\small $H_{2j-2}$}
\rput[bl](21,8){\small $H_{2j-1}$}
\rput[bl](47,8){\small $H_{2j}$}
\rput[bl](62,8){\small $H_{2j-2}$}
\rput[bl](88,8){\small $H_{2j-1}$}
\rput[bl](120,8){\small $H_{2j}$}

\rput[bl](1,17.5){\bf{\large $N_{j,2}$}}
\rput[bl](26,16.5){\bf{\large $N'_{j,2}$}}
\rput[bl](71,17.5){\bf{\large $N_{j,2}$}}
\rput[bl](108,16.5){\bf{\large $N'_{j,1}$}}

\rput[bl](13,1){\bf{\large $N_{j,1}$}}
\rput[bl](38,1){\bf{\large $N'_{j,1}$}}
\rput[bl](83,1){\bf{\large $N_{j,1}$}}
\rput[bl](100,1){\bf{\large $N'_{j,2}$}}

\rput[bl](18,33){\bf{\large $\varepsilon_j=1$}}
\rput[bl](86,33){\bf{\large $\varepsilon_j=-1$}}


\bigskip


\rput[bl](7,26){\small $T_{2j-2}^ +$}
\rput[bl](35,26){\small  $T_{2j-1}^ +$}
\rput[bl](77,26){\small $T_{2j-2}^ +$}
\rput[bl](104,26){\small  $T_{2j-1}^ +$}
\rput[bl](7,-5){\small $T_{2j-2}^ -$}
\rput[bl](35,-5){\small $T_{2j-1}^ -$}
\rput[bl](77,-5){\small $T_{2j-2}^ -$}
\rput[bl](104,-5){\small $T_{2j-1}^ -$}

\end{pspicture}

\vspace{0.5cm}
\caption{Partie centrale pour la d\'ecomposition simpliciale de $\epsilon$ pour  les  Types $n_i$}
==================================


\vspace{0.5cm}

\ifx\JPicScale\undefined\def\JPicScale{1}\fi
\psset{unit=\JPicScale mm}
\psset{linewidth=0.3,dotsep=1,hatchwidth=0.3,hatchsep=1.5,shadowsize=1,dimen=middle}
\psset{dotsize=0.7 2.5,dotscale=1 1,fillcolor=black}
\psset{arrowsize=1 2,arrowlength=1,arrowinset=0.25,tbarsize=0.7 5,bracketlength=0.15,rbracketlength=0.15}

\begin{pspicture}(0,-5)(90,69)
\psline(5.62,5)(5.62,60)
\psdots[dotstyle=square](5.62,5)
(5.62,60)
\psline(5.62,60)(65,60)
\psdots[dotstyle=square](5.62,60)
(65,60)
\psline(65,59.38)(65,5)
\psdots[dotstyle=square](65,59.38)
(65,5)
\psline(65,5)(5.62,5)
\psdots[dotstyle=square](65,5)
(5.62,5)
\psline(5.62,60)(25.62,40)
\psdots[dotstyle=square](5.62,60)
(25.62,40)
\psline(25.62,40)(65,60)
\psdots[dotstyle=square](25.62,40)
(65,60)
\psline(25.62,40)(5.62,5)
\psdots[dotstyle=square](25.62,40)
(5.62,5)
\psline(65,5.62)(43.75,16.88)
\psdots[dotstyle=square](65,5.62)
(43.75,16.88)
\psecurve(16.88,-3.75)(5.62,5)(16.88,-3.75)(74.38,-2.5)(83.12,18.12)(84.38,39.38)(80.62,54.38)(65,60)(65,59.38)(65,59.38)(65,59.38)

\rput(3.75,63.12){\scriptsize $S_{k,1}$}
\rput(32.5,62.5){\small $X_{k,1}$}
\rput(62.5,63.12){\scriptsize $S_{k,2}$}
\rput(18,50.62){\small $P_{k,1}^-$}
\rput(25.62,43.12){\scriptsize $C_k^-$}
\rput(41.88,50.62){\small $P_{k,2}^-$}
\rput(2,33.12){\small $X_{k,7}$}
\rput(13,23.75){\small $P_{k,7}^-$}
\rput(28.12,53.75){\bf{\large $M_{k,1}^-$}}
\rput(11.25,36.88){\bf{\large $M_{k,7}^-$}}
\rput(2,3.75){\scriptsize $S_{k,7}$}
\rput(34.38,3.12){\small $Q_k$}
\rput(66.88,1.88){\scriptsize $S_{k,0}$}
\rput(68.12,35){\small $H'_k$}
\psline(5.62,5)(23.75,18.12)
\psdots[dotstyle=square](5.62,5)
(23.75,18.12)
\psline(23.75,18.12)(35,26.88)
\psdots[dotstyle=square](23.75,18.12)
(35,26.88)

\psline(35,26.88)(44.38,36.88)
\psdots[dotstyle=square](35,26.88)
(44.38,36.88)
\psline(44.38,36.88)(52.5,45.62)
\psdots[dotstyle=square](44.38,36.88)
(52.5,45.62)
\psline(52.5,45.62)(65,59.38)
\psdots[dotstyle=square](52.5,45.62)
(65,59.38)
\psline(43.12,16.88)(5.62,5)
\psdots[dotstyle=square](43.12,16.88)
(5.62,5)
\psline(43.12,16.88)(23.75,18.12)
\psdots[dotstyle=square](43.12,16.88)
(23.75,18.12)
\psline(43.75,16.88)(35,26.88)
\psdots[dotstyle=square](43.75,16.88)
(35,26.88)
\psline(43.75,16.88)(44.38,36.88)
\psdots[dotstyle=square](43.75,16.88)
(44.38,36.88)
\psline(43.12,16.88)(52.5,45.62)
\psdots[dotstyle=square](43.12,16.88)
(52.5,45.62)

\rput(42.88,14){\scriptsize $C_k^+$}
\rput(27,9){\small $P_{k,3}^+$}
\rput(30.62,15.62){\small $P_{k,2}^+$}
\rput(19.38,12.5){\bf{\large $M_{k,2}^+$}}
\rput(39.38,24.38){\small $P_{k,1}^+$}
\rput(54.38,13.12){\small $P_{k,4}^+$}
\rput(32.5,21.25){\bf{\large $M_{k,1}^+$}}
\rput(42.88,9){\bf{\large $M_{k,3}^+$}}
\psline(43.75,16.88)(65,60)
\psdots[dotstyle=square](43.75,16.88)
(65,60)
\rput(53.12,31.88){\small $P_{k,5}^+$}
\rput(39.38,-3.75){\bf{\large $R'_{k,1}$}}
\rput(86.88,-2.5){\bf{\large $R'_{k,2}$}}
\rput(86,18.12){\small $G_k$}

\psline(26,39.5)(36,26)
\psline(25.5,39.5)(23.5,17.5)
\psline(26,40)(44,37)
\psline(26,40)(52,45.5)
\end{pspicture}

\vspace{0.5cm}

\caption{D\'ecomposition simpliciale de $\zeta_k$ pour $b_k>0$ et $a_k=5,b_k=2$, $w_{k,2}(q_k,h)=q^3_khq_k^2h=x_{k,1}\cdots x_{k,7}$}

==================================

\ifx\JPicScale\undefined\def\JPicScale{1}\fi
\psset{unit=\JPicScale mm}
\psset{linewidth=0.3,dotsep=1,hatchwidth=0.3,hatchsep=1.5,shadowsize=1,dimen=middle}
\psset{dotsize=0.7 2.5,dotscale=1 1,fillcolor=black}
\psset{arrowsize=1 2,arrowlength=1,arrowinset=0.25,tbarsize=0.7 5,bracketlength=0.15,rbracketlength=0.15}

\begin{pspicture}(0,0)(126.88,68.03)
\psline(10.62,5.62)(10.62,60)
\psline(10.62,60)(66.25,60)
\psline(66.25,60)(66.25,5.62)
\psline(10.62,5.62)(66.25,5.62)
\psdots[dotstyle=square](10.62,60)
(66.25,5.62)
\psdots[dotstyle=square](46.62,24.62)
(54.75,17.12)
\psdots[dotstyle=square](20.62,50)
(28.88,41.75)
\psline(52.12,51.25)(10.62,60)
\psline(52.12,51.25)(20.62,50)
\psline(52.12,51.25)(66.25,60)
\psdots[dotstyle=square](52.12,51.25)
(66.25,60)
\psline(52.12,51.25)(66.25,5.62)
\psline(21.25,18.75)(66.25,5.62)
(66.25,5.62)
\psline(21.25,18.75)(54.75,17.12)
\psline(21.25,18.75)(10.62,5.62)
\psdots[dotstyle=square](21.25,18.75)
(10.62,5.62)
\psline(20.5,49.5)(10.62,5)
\psline[linestyle=dotted](29.4,40.9)(47.3,24.6)
\psline(67,5)(47.3,24.6)
\psline(10.62,60)(29.4,40.9)
\psbezier[](10.62,5)(95,-22)(110,15)(66.25,60)
\rput[bl](100.88,35.62){\bf{\Large $b_k<0$}}
\rput[bl](8.12,61.25){\scriptsize $S_{k,1}$}
\rput[bl](35,61){\small $X_{k,1}$}
\rput[bl](63.12,61.88){\scriptsize  $S_{k,2}$}
\rput[bl](36.25,54){\small $P_{k,1}^-$}
\rput[bl](59,52){\small $P_{k,2}^-$}
\rput[bl](45.25,55){\bf{\large  $M_{k,1}^-$}}
\rput[bl](13.2,50){\scriptsize $S_{k,z_k}$}
\rput[bl](31.88,46){\small $P_{k,z_k}^-$}
\rput[bl](20.61,51){\bf{\large $M_{k,z_k}^-$}}
\rput[bl](46,45){\scriptsize  $C_k^-$}
\rput[bl](67.5,33.12){\small $X_{k,2}$}
\rput[bl](53.5,28.25){\small $P_{k,3}^-$}
\rput[bl](55,40.75){\bf{\large $M_{k,2}^-$}}
\rput[bl](17,20.62){\scriptsize  $C_k^+$}
\rput[bl](5,33.12){\small $H'_k$}
\rput[bl](46,9.5){\bf{\large $M_{k,2}^+$}}
\rput[bl](58,12){\small $X_{k,3}$}
\rput[bl](38.5,8.5){\small $P_{k,2}^+$}
\rput[bl](13.12,13.75){\small $P_{k,1}^+$}
\rput[bl](20,9){\bf{\large $M_{k,1}^+$}}
\rput[bl](5.62,3){\scriptsize  $S_{k,0}$}
\rput[bl](36.88,2.5){\small $Q_k$}
\rput[bl](68.12,4.38){\scriptsize $S_{k,3}$}
\rput[bl](90,17.5){\small $G_k$}
\rput[bl](63.12,-3.75){\bf{\large $R'_{k,1}$}}
\rput[bl](104.38,4.38){\bf{\large $R'_{k,2}$}}

\psline(20.5,18.7)(46.10,24.5)
\psline(21,18.7)(29,42)
\psline(21,18.7)(21,50)
\psline(29,42)(52,51)
\psline(21,50)(52,51)
\psline(46.5,24)(52,51)
\psline(55,17)(52,51)
\end{pspicture}

\vspace{0.5cm}

\caption{D\'ecomposition simpliciale de $\zeta_k$ pour $b_k<0$ }

==================================
\vspace{1.5cm}

\ifx\JPicScale\undefined\def\JPicScale{1}\fi
\psset{unit=\JPicScale mm}
\psset{linewidth=0.3,dotsep=1,hatchwidth=0.3,hatchsep=1.5,shadowsize=1,dimen=middle}
\psset{dotsize=0.7 2.5,dotscale=1 1,fillcolor=black}
\psset{arrowsize=1 2,arrowlength=1,arrowinset=0.25,tbarsize=0.7 5,bracketlength=0.15,rbracketlength=0.15}

\begin{pspicture}(0,-3)(90,57)
\rput{0}(46.08,42.98){\psellipse[](0,0)(11.11,11.11)}
\rput{0}(66.09,12.93){\psellipse[](0,0)(11.5,11.5)}
\psecurve(48.12,24.38)(64.38,24.38)(48.12,24.38)(44.38,10.62)(53.75,-1.25)(77.5,-1.88)(82.5,21.25)(73.12,30.62)(58.75,33.12)(49.38,32.5)(49.38,32.5)(49.38,32.5)
\psline(49.38,31.88)(64.38,24.38)
\psline(64.38,24.38)(66.25,12.5)
\psline(49.38,32.5)(45.62,43.12)
\rput(46.25,45){\scriptsize $C_k$}
\rput(31,45){\small $X_{k,1}$}
\rput(69,12){\scriptsize $C_k^+$}
\rput(84,2.5){\small $G_k$}
\rput(65.62,39.38){\bf{\large $R'_{k,2}$}}
\rput(49,15){\bf{\large $R'_{k,1}$}}
\rput(68,17){\small $P_{k,1}^+$}
\rput(61.88,6.25){\bf{\large $M_{k,1}^+$}}
\rput(44.38,37.5){\small $P_{k,1}^-$}
\rput(44.38,58){\bf{\large $M_{k,1}^-$}}
\rput(91.88,33.75){\bf{\large $b_k=0$}}
\rput(59,30){\bf{\small $H'_k$}}
\rput(80,17){\bf{\small $Q_k$}}
\psdots[dotstyle=square](49.38,31.88)(64.38,24.38)(66.25,12.5)(45.62,43.12)
\end{pspicture}

\vspace{1cm}

\caption{D\'ecomposition simpliciale de $\zeta_k$ pour $b_k=0$ }

\vspace{1cm}

\author{Anne Bauval}\\
\address{\small Institut de Math\'ematiques de Toulouse\\
Equipe Emile Picard, UMR 5580\\
Universit\'e Toulouse III\\
118 Route de Narbonne, 31400 Toulouse - France\\
e-mail: bauval@math.univ-toulouse.fr}

\author{Claude Hayat}\\
\address{\small Institut de Math\'ematiques de Toulouse\\
Equipe Emile Picard, UMR 5580\\
Universit\'e Toulouse III\\
118 Route de Narbonne, 31400 Toulouse - France\\
e-mail: hayat@math.univ-toulouse.fr}
\end{document}